\newtheorem{thm}{Theorem}
\newtheorem{lem}[thm]{Lemma}
\newtheorem{cor}[thm]{Corollary}
\newtheorem{prob}[thm]{Problem}
\newtheorem{claim}{Claim}
\title{Constructing graphs with no independent transversals}
\author{Penny Haxell\thanks{Department of Combinatorics and Optimization, University of Waterloo, Waterloo ON Canada. Email: pehaxell@uwaterloo.ca. Partially supported by NSERC}, Ronen Wdowinski\thanks{Department of Combinatorics and Optimization, University of Waterloo, Waterloo ON Canada. Email: ronen.wdowinski@uwaterloo.ca}}
\date{\today}
\begin{document}

\maketitle

\begin{abstract}
Given a graph $G$ and a partition $\mathcal{P}$ of its vertex set, an \textit{independent transversal} (IT) is an independent set of $G$ that contains one vertex from each block in $\mathcal{P}$. Various sufficient conditions for the existence of an IT have been established, and a common theme for many of them is that the block sizes are sufficiently large compared to the maximum degree of $G$. Consequently, there has been interest in constructing graphs with no IT which demonstrate that these bounds on the block sizes are best possible. We describe a simple systematic method for constructing vertex-partitioned graphs with large block sizes and no IT. Unifying previous constructions, we use our method to derive classical extremal constructions due to Jin (1992), Yuster (1997), and Szab\'o and Tardos (2006) in streamlined fashion. For our new results, we describe extremal constructions of minimal graphs with maximum degree two and no IT, generalizing a result of Aharoni, Holzman, Howard, and Spr\"ussel (2015). We construct a family of locally sparse graphs with no IT, complementing an asymptotic result of Loh and Sudakov (2007). We describe new and smaller counterexamples to a list coloring conjecture of Reed (1999), which was originally disproved by Bohman and Holzman (2002). We disprove a conjecture of Aharoni, Alon, and Berger (2016) about IT's in graphs without large induced stars. We answer negatively a question of Aharoni, Holzman, Howard, and Spr\"ussel (2015) about extremal graphs with no IT, but we also prove that a useful variation of their question does hold.
\end{abstract}

\section{Introduction}
Let $G$ be a graph and let $\mathcal{P} = \{V_1, \ldots, V_r\}$ be a partition of its vertex set $V(G)$ into \textit{blocks} (partition classes) $V_i$. An \textit{independent transversal} (IT) of $G$ with respect to $\mathcal{P}$ is an independent set $\{v_1, \ldots, v_r\}$ of $G$ such that $v_i \in V_i$ for every $i \in [r]$. Independent transversals have many combinatorial applications, such as to SAT \cite{Ha3}, linear arboricity \cite{Al1, Al2}, hypergraph matchings \cite{AhHa}, list coloring \cite{Ha2}, circular coloring~\cite{Kaetal}, linear groups~\cite{Bretal}, resource allocation \cite{AsFeSa, HaSz2}, and many others (see e.g.~\cite{GraHa} and the references therein). It is NP-complete to decide whether a given vertex-partitioned graph has an IT, but due to the above applications there has been great interest in sufficient conditions for the existence of an IT. One frequently applied sufficient condition is the following theorem from \cite{Ha1, Ha2}.

\begin{thm}[\cite{Ha1, Ha2}] \label{max-degree-IT}
Let $G$ be a graph with maximum degree $d$, and let $\mathcal{P} = \{V_1, \ldots, V_r\}$ be a partition of its vertex set $V(G)$. If $|V_i| \ge 2d$ for every $i$, then $G$ has an IT with respect to $\mathcal{P}$.
\end{thm}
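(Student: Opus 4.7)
The plan is to prove Theorem \ref{max-degree-IT} by the classical extremal / swapping argument at the heart of Haxell's original proof. Without loss of generality assume $|V_i| = 2d$ exactly, by deleting extra vertices from oversized blocks; an IT of the reduced graph is automatically an IT of $G$. Suppose for contradiction that no IT exists. Call an independent set $I \subseteq V(G)$ with $|I \cap V_i| \le 1$ for every $i$ a \emph{partial IT}, and choose $I$ to be a partial IT covering the maximum number of blocks. Among all such maxima, refine the choice (for instance, take $I$ lex-minimal with respect to some fixed ordering of $V(G)$) so as to make the swap step below conclusive. By assumption, at least one block, say $V_1$, is uncovered.

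For each $v \in V_1$, put $J(v) = N_G(v) \cap I$. Since $I$ is a maximum partial IT, $J(v)$ is nonempty (else $I \cup \{v\}$ is larger), and $|J(v)| \le d$ by the degree bound. Each $w \in J(v)$ is the current representative of some block $V_{i(w)}$, which still contains $2d - 1$ other vertices. The heart of the proof is a \emph{swap}: pick some $v \in V_1$ and attempt to build a strictly larger partial IT by removing $J(v)$, adding $v$, and choosing, for each removed $w$, a replacement $w' \in V_{i(w)} \setminus \{w\}$ that is nonadjacent to $(I \setminus J(v)) \cup \{v\}$ and to the other tentative replacements. If this succeeds we obtain a partial IT covering more blocks than $I$, contradicting maximality.

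The remaining task is to show that such a swap must succeed. The strategy is iterative: if a tentative replacement $w'$ fails because of a conflict with some $u \in I$, add $u$ to the set of vertices scheduled for removal and try to re-cover $V_{i(u)}$ by yet another new representative, repeating as necessary. One organizes this as a monotonically growing ``failure set'' $F_0 \subsetneq F_1 \subsetneq \cdots \subseteq I$, using the bound $|V_i| \ge 2d$ against the degree $d$ to guarantee that at each step a fresh candidate remains available in the relevant block. Since $I$ is finite the iteration must terminate, and termination yields either a genuine augmenting swap or a configuration that violates the refined extremal choice of $I$.

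The main technical obstacle is precisely the bookkeeping of this iteration: one must set up the failure-propagation rule so that each newly added vertex of $F_j$ is truly new and the process cannot revisit prior states, and one must verify that the slack $|V_i| - d \ge d$ is exactly enough to always supply a further replacement. This is where the factor of two in the hypothesis $|V_i| \ge 2d$ is used critically; anything smaller breaks the counting. Once the bookkeeping is carried out (the lex-min extremal choice is what makes this mechanical rather than ad hoc), the contradiction is unavoidable and the theorem follows.
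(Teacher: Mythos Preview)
The paper does not give its own proof of Theorem~\ref{max-degree-IT}; it is quoted from~\cite{Ha1,Ha2} as background. So there is no in-paper argument to compare against directly, although the feasible-pair machinery deployed in Section~\ref{big-proof} for Theorem~\ref{union-of-two-classes} is essentially the same engine that drives the original combinatorial proof.

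Your outline points in the right direction but has a genuine gap. The assertion ``the slack $|V_i|-d\ge d$ is exactly enough to always supply a further replacement'' is precisely the content of the theorem, and you have not justified it. In your scheme a candidate replacement $w'\in V_{i(w)}$ must avoid adjacency with every vertex still retained in the partial transversal, with $v$, and with all previously chosen replacements; nothing in your setup bounds the number of forbidden candidates in $V_{i(w)}$ by~$d$. Bounding $\deg(w')\le d$ limits how many \emph{retained} vertices a single candidate can conflict with, not how many candidates are blocked, and every one of the $2d-1$ candidates could have a different single neighbour in the retained set. Your failure-propagation step then faces the same unbounded obstruction in the next block, and you never explain what configuration arises at termination or why it contradicts anything; the lex-minimal tiebreak is not part of any known proof of this result and it is unclear what work it would do.

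The actual argument does not try to re-cover emptied blocks one at a time. It grows a structured set $I$ (in the language of Section~\ref{big-proof}: $G[I]$ a forest of stars with centres $W=I\setminus T$, and the block graph $\mathcal{G}_I$ a tree on the active classes $S(I)$) until every vertex of every active block is in $I$ or adjacent to $I$. The contradiction then comes from a global count: the tree structure forces $|I|$ to be small relative to $|S(I)|$, while domination of $\bigcup_{V_j\in S(I)}V_j$ with degree-$d$ vertices requires $|I|$ to be large when each block has at least $2d$ vertices. That counting step, not a clever extremal choice of $I$, is where the factor~$2$ in the hypothesis is used.
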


Theorem \ref{max-degree-IT} was an improvement over Alon's earlier result \cite{Al2} that an IT exists whenever $|V_i| \ge 2ed$ for every $i$. There is a close connection between IT's of graphs and the topological connectedness of their independence complexes. Theorem \ref{max-degree-IT} and variants of it have been proven using these fruitful topological ideas \cite{AhAlBe, AhBe1, AhChKo, AhHa, Ha2, Me1, Me2}, but the original proof of Theorem \ref{max-degree-IT} was combinatorial, and this combinatorial approach has also been used for various other related settings \cite{AhHa, BeHaSz, Ha1, Ha2, Ha3}. For more on the topological point of view, we refer to the survey \cite{Ha4}.

Szab\'o and Tardos \cite{SzTa} proved that the condition $|V_i| \ge 2d$ in Theorem \ref{max-degree-IT} is best possible for every integer $d \ge 1$, in the sense that there exist graphs $G$ with maximum degree $d$ and vertex partitions $\mathcal{P} = \{V_1, \ldots, V_r\}$ of $V(G)$ such that we cannot relax the condition $|V_i| \ge 2d$ to $|V_i| \ge 2d - 1$ for every $i$ while still guaranteeing that $G$ has an IT.

\begin{thm}[\cite{SzTa}] \label{Szabo-Tardos}
Let $d \ge 1$ and let $G$ be the disjoint union of $2d - 1$ copies of the complete bipartite graph $K_{d,d}$. There exists a partition $\mathcal{P} = \{V_1, \ldots, V_{2d}\}$ of $V(G)$ such that $|V_i| = 2d - 1$ for every $i$ and $G$ has no IT.
\end{thm}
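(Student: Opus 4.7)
The plan is to construct the partition $\mathcal{P}$ explicitly and verify both the block-size condition and the absence of an independent transversal by reducing the latter to a covering problem on the Boolean cube. As a first step, I label the $2d-1$ copies of $K_{d,d}$ as $H_1, \ldots, H_{2d-1}$ with bipartitions $A_j \sqcup B_j = V(H_j)$, and for the partition being constructed I let $S_j = \{i : V_i \cap A_j \ne \emptyset\}$ and $T_j = \{i : V_i \cap B_j \ne \emptyset\}$. Since $H_j$ is complete bipartite, any independent transversal can intersect $V(H_j)$ only on one side; writing $\tau(j) \in \{A, B\}$ for that side and $U_j^A = S_j$, $U_j^B = T_j$, an IT exists if and only if there is some $\tau \in \{A, B\}^{2d-1}$ for which $\bigcup_{j=1}^{2d-1} U_j^{\tau(j)} = [2d]$. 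The task therefore reduces to constructing the sets $(S_j, T_j)$ (and a valid underlying assignment of vertices to blocks) such that no such $\tau$ exists.

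I would then explicitly define the vertex-to-block assignment via the systematic construction method developed earlier in the paper. Concretely, I specify nonnegative integer multiplicities $m_{j,A,i}$ and $m_{j,B,i}$ with row sums $\sum_i m_{j,A,i} = \sum_i m_{j,B,i} = d$, so that $\sum_j (m_{j,A,i} + m_{j,B,i}) = 2d-1$ for every $i$. A natural template combines \emph{concentrated} copies (where an entire side $A_j$ or $B_j$ is placed in a single block, forcing the corresponding coordinate of $\tau$ to a specific value in that block's uncovering witness) with \emph{spread} copies (where a side distributes one vertex to each of several blocks, leaving several coordinates of $\tau$ free).

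Verifying that each block has size $2d-1$ is direct by summation. The heart of the proof is to show that for every $\tau \in \{A, B\}^{2d-1}$ there is some $i \in [2d]$ missed by every $U_j^{\tau(j)}$. This amounts to showing that the subcubes $\Sigma_i = \{\tau : i \notin \bigcup_j U_j^{\tau(j)}\} \subseteq \{A, B\}^{2d-1}$ collectively cover $\{A, B\}^{2d-1}$; each $\Sigma_i$ has coordinates forced to a specific value for each $j$ with $i \in S_j \triangle T_j$, left free for each $j$ with $i \notin S_j \cup T_j$, and is empty whenever some $j$ has $i \in S_j \cap T_j$.

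The main obstacle is arranging the construction so that these $2d$ subcubes actually cover the $2^{2d-1}$ points of the Boolean cube. The block-size constraint bounds the possible number of free coordinates of each $\Sigma_i$, since a block receiving contributions from many copies has fewer free coordinates; simultaneously, the covering requirement forces the $\Sigma_i$'s to be collectively large enough that their union is the entire cube. The systematic method of the paper is what balances these two constraints, producing a construction whose associated subcubes fit together to tile (or cover) the Boolean cube; once that is in hand, exhibiting the uncovered index $i(\tau)$ for every $\tau$ becomes a routine read-off from the construction.
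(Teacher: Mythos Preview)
Your reduction to a Boolean-cube covering problem is correct and is essentially the viewpoint of the original Szab\'o--Tardos argument: no IT exists precisely when the subcubes $\Sigma_i$ cover $\{A,B\}^{2d-1}$. But the proposal never carries out either of its two essential steps. You do not write down the partition (the multiplicities $m_{j,A,i},\,m_{j,B,i}$), and you do not verify the covering; both are deferred to phrases like ``a natural template combines\ldots'' and ``a routine read-off from the construction''. That is where the entire content of the theorem lies. The covering must in fact be exact: with only $2d$ subcubes in a cube of size $2^{2d-1}$, and with $\Sigma_i=\emptyset$ whenever $V_i$ meets both sides of some $H_j$, arranging the $\Sigma_i$ to cover everything is precisely the combinatorial difficulty of the problem, not a formality to be waved through.

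There is also a circularity. You propose to obtain the partition ``via the systematic construction method developed earlier in the paper''. That method is the iterative application of Lemma~\ref{join-lemma}, and Lemma~\ref{join-lemma} already delivers the no-IT conclusion directly at each step, making the cube-covering analysis superfluous. The paper's proof does exactly this: start from a single $K_{d,d}$ with its standard bipartition (trivially no IT), and at each step add a fresh $K_{d,d}$ with its bipartition $\{V_1',V_2'\}$, distributing the vertices of the current smallest block $U_{k-1}$ so that one new block reaches size $2d-1$ and the other grows by one; after $2(d-1)$ steps every block has size $2d-1$, and Lemma~\ref{join-lemma} certifies ``no IT'' throughout. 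If you want a genuinely independent proof via the Boolean cube, you must write down the $S_j,T_j$ explicitly and prove the covering without appealing to Lemma~\ref{join-lemma}; as written, the proposal does neither.
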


This theorem was previously proven by Jin \cite{Ji} and Yuster \cite{Yu} independently when $d$ is a power of $2$, but their partitions $\mathcal{P}$ were different from those of Szab\'o and Tardos \cite{SzTa}. Actually, Szab\'o and Tardos \cite{SzTa} proved a more general version of Theorem \ref{Szabo-Tardos} where the block sizes $|V_i|$ depend on the number of blocks $r$ in addition to the maximum degree $d$ (see Corollary \ref{general-Szabo-Tardos}). Together with a refinement of Theorem \ref{max-degree-IT} proven in \cite{HaSz1}, this more general construction of Szab\'o and Tardos \cite{SzTa} solved a problem of Bollob\'as, Erd\H{o}s, and Szemer\'edi \cite{BoErSz}, who were interested in the complementary, Tur\'an-type problem of minimum degree conditions for the existence of a clique $K_r$ in a balanced $r$-partite graph. In addition to the above mentioned constructions \cite{Ji, SzTa, Yu}, many other non-trivial constructions of graphs with no IT have been described with different applications \cite{AhHoHoSp, Al3, BoHo, GrKaTrWa}. However, there has not been an explicit systematic procedure for constructing such examples of graphs with no IT. 

In this paper, we describe a simple iterative method that can be used to derive essentially all of the constructions cited above. Our construction method centers around the following simple lemma, which was also used in \cite{HaWd, CaHaKaWd}.

\begin{lem} \label{join-lemma}
Let $G$ and $H$ be disjoint graphs, and let $\mathcal{P} = \{V_1, \ldots, V_r\}$ and $\mathcal{Q} = \{W_1, \ldots, W_s\}$ be partitions of $V(G)$ and $V(H)$, respectively. Let $\mathcal{R} = \{V_1', \ldots, V_r', W_1, \ldots, W_{s-1}\}$, where $V_1' \supseteq V_1, \ldots, V_r' \supseteq V_r$ are obtained by distributing each of the vertices of $W_s$ into one of $V_1, \ldots, V_r$ arbitrarily. If $G$ has no IT with respect to $\mathcal{P}$ and $H$ has no IT with respect to $\mathcal{Q}$, then $G \cup H$ has no IT with respect to $\mathcal{R}$.
\end{lem}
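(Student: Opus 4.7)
The plan is to argue by contradiction. Suppose $G \cup H$ has an IT $T$ with respect to $\mathcal{R}$. Since the blocks of $\mathcal{R}$ are $V_1', \ldots, V_r', W_1, \ldots, W_{s-1}$, we may write $T = \{v_1, \ldots, v_r, w_1, \ldots, w_{s-1}\}$ with $v_i \in V_i'$ and $w_j \in W_j$. By construction, $V_i' = V_i \cup U_i$ where $U_i \subseteq W_s$ is the (possibly empty) set of vertices of $W_s$ assigned to $V_i$, so each $v_i$ lies either in $V_i$ (inside $V(G)$) or in $W_s$ (inside $V(H)$).

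The argument then splits into two cases based on where the $v_i$'s live. First, suppose $v_i \in V_i$ for every $i$. Then $\{v_1, \ldots, v_r\}$ is a set of vertices of $G$, one from each block of $\mathcal{P}$, and it is independent in $G$ because it is independent in $G \cup H$ (and $G$ is an induced subgraph of $G \cup H$ since $G$ and $H$ are disjoint). This is an IT of $G$ with respect to $\mathcal{P}$, contradicting the hypothesis on $G$.

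Otherwise, some $v_{i_0}$ lies in $W_s$. Consider the set $T' = \{w_1, \ldots, w_{s-1}, v_{i_0}\}$. All its elements belong to $V(H)$, it contains exactly one vertex $w_j$ from each $W_j$ with $j < s$ and one vertex $v_{i_0}$ from $W_s$, and it is independent in $H$ since it is a subset of the independent set $T$. Hence $T'$ is an IT of $H$ with respect to $\mathcal{Q}$, contradicting the hypothesis on $H$.

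The whole argument is essentially a definition chase, so there is no real obstacle; the only thing to be careful about is keeping track of which block each vertex of the putative transversal $T$ comes from, and in particular remembering that a vertex $v_i \in V_i'$ can actually be a vertex of $H$ (namely a vertex of $W_s$ that was reassigned), which is precisely what enables the second case to produce an IT of $H$.
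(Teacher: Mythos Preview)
Your proof is correct and follows essentially the same argument as the paper: assume an IT of $G\cup H$ with respect to $\mathcal{R}$, then split into the two cases according to whether every $v_i$ lies in $V_i$ (yielding an IT of $G$) or some $v_{i_0}$ lies in $W_s$ (yielding an IT of $H$). The only difference is that you spell out a few background observations (e.g., that subsets of independent sets are independent) that the paper leaves implicit.
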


\begin{proof}
Assume for contradiction that $G \cup H$ has an IT $\{v_1, \ldots, v_r, w_1, \ldots, w_{s-1}\}$ with respect to $\mathcal{R}$, where $v_i \in V_i'$ for $1 \le i \le r$ and $w_j \in W_j$ for $1 \le j \le s-1$. If $v_i \in V_i$ for every $1 \le i \le r$, then $\{v_1, \ldots, v_r\}$ is an IT of $G$ with respect to $\mathcal{P}$, a contradiction. So suppose instead that $v_j \in V_j' \cap W_s$ for some $1 \le j \le r$. Then $\{w_1, \ldots, w_{s-1}, v_j\}$ is an IT of $H$ with respect to $\mathcal{Q}$, again a contradiction.
\end{proof}

Notice that the complete bipartite graph $K_{d,d}$ has no IT with respect to its standard bipartition (the vertex partition into two independent sets of size $d$). We will show that we can reconstruct the partitions of Jin \cite{Ji}, Yuster \cite{Yu}, and Szab\'o and Tardos \cite{SzTa} for Theorem \ref{Szabo-Tardos} effectively by applying Lemma \ref{join-lemma} iteratively on $K_{d,d}$ in different ways. Later, we will show that in fact every construction for Theorem \ref{Szabo-Tardos} can be derived by applying Lemma \ref{join-lemma} iteratively on $K_{d,d}$ (Corollary \ref{characterize-Kdd}). Besides reproving Theorem \ref{Szabo-Tardos}, we will use Lemma \ref{join-lemma} to find many new constructions of graphs with no IT that answer some open questions. 

Let $G$ be a graph and let $\mathcal{P} = \{V_1, \ldots, V_r\}$ be a partition of $V(G)$. Using topological methods, Aharoni, Holzman, Howard, and Spr\"ussel \cite{AhHoHoSp} proved that if $G$ has maximum degree $d > 2$, $|V_i| \ge 2d - 1$ for every $i$, and $G$ has no IT, then $G$ necessarily contains the disjoint union of $2d - 1$ copies of $K_{d,d}$. Therefore, Theorem \ref{Szabo-Tardos} verifies that the disjoint union of $2d - 1$ copies of $K_{d,d}$ is the unique minimal obstruction to the existence of an IT when the graph has maximum degree $d > 2$ and the block sizes satisfy $|V_i| \ge 2d - 1$. On the other hand, for the case of maximum degree $d = 2$ and $|V_i| \ge 3$, the same authors proved that $G$ necessarily contains the disjoint union of three cycles of lengths $1$ modulo $3$. This result is related to the cycle-plus-triangles problem \cite{AhBeZi, DuHsHw, FlSt, Sa}. One of our new results is verifying that, indeed, all disjoint unions of three cycles of lengths $1$ modulo $3$ are minimal obstructions to the existence of an IT (when $d = 2$ and $|V_i| \ge 3$ for every $i$).

\begin{thm} \label{cycles-no-IT}
Let $G = C_{\ell_1} \sqcup C_{\ell_2} \sqcup C_{\ell_3}$ be the disjoint union of three cycles with lengths $\ell_j \equiv 1 \pmod{3}$, $\ell_j \ge 4$ for $1 \le j \le 3$. There exists a partition $\mathcal{P} = \{V_1, \ldots, V_r\}$ of $V(G)$ such that $|V_i| = 3$ for every $i$ and $G$ has no IT.
\end{thm}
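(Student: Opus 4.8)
The plan is to reduce Theorem~\ref{cycles-no-IT}, via two applications of Lemma~\ref{join-lemma}, to a single statement about \emph{one} cycle: for every integer $k\ge 1$ the cycle $C_{3k+1}$ admits a partition $\mathcal{Q}_k$ into $k-1$ blocks of size $3$ and exactly two blocks of size $2$, with respect to which $C_{3k+1}$ has no IT. (For $k=1$ this is just the fact noted after Lemma~\ref{join-lemma} that $C_4=K_{2,2}$ has no IT with respect to its bipartition into two blocks of size $2$.) A short count shows these sizes are forced: since each join reduces the total ``deficiency'' $3(\#\text{blocks})-|V|$ by $3$ and the final partition must have deficiency $0$, each cycle must be partitioned with deficiency exactly $2$; the option of one block of size $1$ and the rest of size $3$ can be ruled out, so $\mathcal{Q}_k$ must look as described. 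I expect constructing and verifying $\mathcal{Q}_k$ to be the whole difficulty, with the reduction being bookkeeping.

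Granting the lemma, the theorem follows as follows. Write $\ell_j=3k_j+1$. First apply Lemma~\ref{join-lemma} with $G=C_{\ell_1}$ partitioned by $\mathcal{Q}_{k_1}$ and $H=C_{\ell_2}$ partitioned by $\mathcal{Q}_{k_2}$, taking the redistributed block $W_s$ to be one of the two size-$2$ blocks of $\mathcal{Q}_{k_2}$ and sending its two vertices one apiece into the two size-$2$ blocks of $\mathcal{Q}_{k_1}$. By Lemma~\ref{join-lemma} the resulting partition of $C_{\ell_1}\sqcup C_{\ell_2}$ has no IT, and by construction it has exactly one block of size $2$ and all other blocks of size $3$. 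Now apply Lemma~\ref{join-lemma} again with $G=C_{\ell_3}$ partitioned by $\mathcal{Q}_{k_3}$ and $H=C_{\ell_1}\sqcup C_{\ell_2}$ with the partition just produced, taking $W_s$ to be the unique size-$2$ block of $H$ and sending its two vertices one apiece into the two size-$2$ blocks of $\mathcal{Q}_{k_3}$. Every block of the resulting partition of $C_{\ell_1}\sqcup C_{\ell_2}\sqcup C_{\ell_3}$ now has size exactly $3$, it has no IT by Lemma~\ref{join-lemma}, and counting vertices shows it has $(\ell_1+\ell_2+\ell_3)/3$ blocks, as required.

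For the construction of $\mathcal{Q}_k$ I would exhibit an explicit pattern, checking it first in small cases. For $C_7$ (vertices $v_1,\dots,v_7$ cyclically) one can take $\{v_1,v_3,v_5\},\{v_2,v_7\},\{v_4,v_6\}$; for $C_{10}$ one can take $\{v_2,v_4,v_9\},\{v_5,v_7,v_{10}\},\{v_1,v_3\},\{v_6,v_8\}$. In each case the two size-$2$ blocks are the neighbourhoods $N(t_B)$ and $N(t_C)$ of two ``trap'' vertices $t_B,t_C$ that lie inside size-$3$ blocks, and the remaining vertices are assigned so that any other choice of representative propagates, along the cyclic order, into a chain of forced choices. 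The general $\mathcal{Q}_k$ is obtained by lengthening this chain; I anticipate that the cleanest description gives $\mathcal{Q}_{k+1}$ from $\mathcal{Q}_k$ by splicing one more size-$3$ block into the chain at a controlled location, rather than by a closed formula, and some care is needed because naive insertions (for instance inserting a block of three consecutive vertices) destroy the no-IT property.

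The crux is proving that $C_{3k+1}$ has no IT with respect to $\mathcal{Q}_k$. I would argue directly from the forcing structure: no IT can select $t_B$ or $t_C$ because their full neighbourhoods lie in blocks of size $2$; selecting any other representative of the block containing $t_B$ (respectively $t_C$) then forces the representative of an adjacent block, and iterating this propagation eventually forces two selected vertices to be adjacent, a contradiction. The main obstacle is making this rigorous uniformly in $k$: one must verify that every branch of the case tree terminates in an adjacency. The most promising route is to arrange the chain so that, after one forced step, the analysis for $C_{3(k+1)+1}$ reduces to the analysis for $C_{3k+1}$, yielding an induction on $k$ with the $C_4$ case (or $C_7$ case) as the base.
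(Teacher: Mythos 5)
Your reduction of the theorem to a single-cycle statement is correct and is exactly the paper's route: your claimed partition $\mathcal{Q}_k$ of $C_{3k+1}$ into $k-1$ blocks of size $3$ and two blocks of size $2$ with no IT is precisely the paper's Lemma~\ref{cycle-IT-lemma}, and your two applications of Lemma~\ref{join-lemma} (merging the size-$2$ blocks pairwise across the three cycles) match the paper's proof of Theorem~\ref{cycles-no-IT} up to relabelling which cycle donates its size-$2$ block to which. Your $C_7$ and $C_{10}$ examples are also correct.

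The genuine gap is that the single-cycle lemma, which you yourself identify as ``the whole difficulty,'' is never actually proved for general $k$. Your description of $\mathcal{Q}_k$ is conditional throughout (``I anticipate,'' ``the most promising route is''), you do not specify the splicing rule that produces $\mathcal{Q}_{k+1}$ from $\mathcal{Q}_k$, and you explicitly note that naive insertions destroy the no-IT property without saying which insertion survives. A direct case analysis of the forcing chain, uniform in $k$, is exactly the part that needs a mechanism, and without one the proof is incomplete. The paper supplies that mechanism with an auxiliary edge-swap lemma (Lemma~\ref{edge-delete-lemma}): if $G$ has no IT and $uv$ is an edge with $u\in V_i$, $v\in V_j$, then deleting $uv$ and joining every vertex of some other block $V_k$ to $u$ or $v$ preserves the absence of an IT. The induction step then splices a $3$-vertex path $v_1\mbox{--}u_1\mbox{--}v_2$ into an arbitrary edge $uv$ of $C_{3r-2}$: first apply Lemma~\ref{join-lemma} to $C_{3r-2}\sqcup K_{1,2}$ (absorbing the centre $u_1$ into one of the two size-$2$ blocks, which becomes a size-$3$ block, and keeping $\{v_1,v_2\}$ as a new size-$2$ block), then apply Lemma~\ref{edge-delete-lemma} with $F=\{uv_1,vv_2\}$ to obtain $C_{3r+1}$ with no IT. This is the concrete realization of the ``splice one more block into the chain'' step you conjecture; without it (or an equivalent verified construction), your argument establishes the theorem only for $\ell_j\in\{4,7,10\}$.
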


For comparison, Aharoni, Holzman, Howard, and Spr\"ussel  \cite{AhHoHoSp} showed that for any $\ell \equiv 1 \pmod{3}$, $\ell \ge 4$, there exist $k \ge 1$ and a partition $\mathcal{P} = \{V_1, \ldots, V_r\}$ of the vertex set of the graph $G = C_\ell \sqcup \cdots \sqcup C_\ell$ ($k$ cycles of length $\ell$) such that $|V_i| = 3$ for every $i$ and $G$ has no IT. They showed that one can take $k = \frac{\ell}{2} + 1$ if $\ell$ is even, and $k = \ell + 2$ if $\ell$ is odd. Our result says that in fact one can always take $k = 3$, and the cycle lengths do not have to be the same.

Although the block size lower bound $2d$ in Theorem \ref{max-degree-IT} is best possible, there has been interest in decreasing it in some contexts. These decrements are achieved by restricting either the graphs $G$ or the types of vertex partitions $\mathcal{P}$ under consideration, so as to avoid structures present in the known constructions for Theorem \ref{Szabo-Tardos} \cite{Ji, SzTa, Yu}. For example, one approach has been to forbid the graph $G$ from containing large induced complete bipartite subgraphs such as $K_{d,d}$ or even $K_{1,k}$ \cite{AhAlBe, AhBeZi, AhHoHoSp, HaWd}. In this direction, we will disprove a conjecture of Aharoni, Alon, and Berger \cite{AhAlBe} and answer negatively a question of Aharoni, Holzman, Howard, and Spr\"ussel \cite{AhHoHoSp} (see below). Another approach for decreasing the block size lower bound $2d$ has been to restrict the structure of the partition $\mathcal{P}$ around the graph $G$. This latter approach will be our next focus.

First we look at the setting of locally sparse graphs. The \textit{local degree} of $G$ is the maximum, over all $v \in V(G)$ and $V_i \in \mathcal{P}$ not containing $v$, of the number of edges between $v$ and $V_i$. The \textit{multiplicity} of $G$ is the maximum, over all connected components $C$ of $G$ and $V_i \in \mathcal{P}$, of the number of vertices in $C \cap V_i$. Notice that the local degree of $G$ is always at most the multiplicity of $G$. Loh and Sudakov \cite{LoSu} proved that if $G$ has maximum degree $d$, local degree $m = o(d)$, and $|V_i| \ge d + o(d)$ for every $i$, then $G$ has an IT. We call such vertex-partitioned graphs, whose local degree grows much more slowly than the maximum degree, \textit{locally sparse}. Thus the $2d$ lower bound in Theorem \ref{max-degree-IT} decreases significantly when we assume that the vertex-partitioned graph $G$ is locally sparse. More recently, Glock and Sudakov \cite{GlSu} and Kang and Kelly \cite{KaKe} independently strengthened Loh and Sudakov's \cite{LoSu} result to a \textit{maximum block average degree} setting instead of a maximum degree setting. Towards the effort of finding the optimal block sizes for locally sparse graphs that guarantee an IT, we apply Lemma \ref{join-lemma} to prove the following.

\begin{thm} \label{locally-sparse-graph}
For all integers $d \ge m \ge 1$ where $d$ is a multiple of $m$, there exists a graph $G$ and partition $\mathcal{P}$ of $V(G)$ such that $G$ has maximum degree $d$, local degree and multiplicity $m$, $|V_i| \ge d + 2m - \left\lceil \frac{2m^2 + m}{d + m}\right\rceil$ for every $V_i \in \mathcal{P}$, and there is no IT.
\end{thm}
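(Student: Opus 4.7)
The plan is to apply Lemma~\ref{join-lemma} iteratively, with the \emph{blow-up} of a Szab\'o--Tardos construction as the starting point. Set $k=d/m$ and apply Theorem~\ref{Szabo-Tardos} at maximum degree $k$: this produces a graph $H$ that is a disjoint union of $2k-1$ copies of $K_{k,k}$, together with a partition into $2k$ blocks of size $2k-1$ and no IT. I would then replace every vertex of $H$ by an independent set of $m$ ``twins,'' placing all twins of a vertex in the same block as the original and joining the twin-sets of adjacent vertices by a complete bipartite graph. The resulting graph $G_0$ is a disjoint union of $2k-1$ copies of $K_{d,d}$, and the pulled-back partition has $2k$ blocks of size $m(2k-1)=2d-m$. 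Any IT of $G_0$ would descend, by forgetting twins, to an IT of $H$, so $G_0$ has no IT; a direct inspection confirms that $G_0$ has maximum degree $d$, local degree $m$, and multiplicity $m$.

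A short calculation shows that $2d-m\ge d+2m-\lceil(2m^2+m)/(d+m)\rceil$ whenever $k\ge 3$, and also in the small subcases $k=2, m\le 3$ and $k=1, m=1$; in all of these cases the base $G_0$ already witnesses the theorem. In the remaining cases ($k=1$ with $m\ge 2$, or $k=2$ with $m\ge 4$) the target exceeds $2d-m$ by a small additive amount $m(3-k)-\lceil(2m+1)/(k+1)\rceil$, and I would close the gap by applying Lemma~\ref{join-lemma} with further copies of $K_{m,m}$ (each having no IT with respect to its bipartition). In each such join, I place all $m$ vertices of one side of the new $K_{m,m}$ into a single current block: by exactly the same verifications used elsewhere in the paper, this grows the chosen block by $m$, preserves maximum degree, local degree, and multiplicity equal to $m$, and introduces a fresh block of size $m$ (the other side of the joined $K_{m,m}$), which must itself be enlarged by subsequent joins.

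The heart of the argument and the main obstacle is therefore the scheduling: I must show that a finite sequence of $K_{m,m}$-joins, arranged in, say, a round-robin over all current blocks, can bring every block---including those newly introduced along the way---up to the target size $s^*:=d+2m-\lceil(2m^2+m)/(d+m)\rceil$. After a careful counting of the total number of vertices added versus the total deficit that must be absorbed, this reduces to the rearranged inequality
\[
(d+m)(d+2m-s^*)\le 2m^2+m,
\]
which is precisely how $s^*$ was defined. Once this inequality is checked, a standard pigeonhole / averaging argument on the distribution of the joined vertices among the existing blocks produces the desired schedule, completing the construction.
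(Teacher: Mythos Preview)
Your blow-up step does not give local degree and multiplicity $m$; it gives local degree and multiplicity $d$. The Szab\'o--Tardos construction $H$ at maximum degree $k$ already has local degree and multiplicity equal to $k$, not $1$: by Corollary~\ref{union-of-two-classes-Kdd} (or by tracing the iterative construction in Section~\ref{section-disjoint-complete-bipartite}), some $K_{k,k}$ component of $H$ lies entirely in the union of two blocks, so a vertex on one side of that component has all $k$ of its neighbours in a single block, and that block contains $k$ vertices of the component. Blowing up by a factor of $m$ multiplies every such count by $m$: each twin of that vertex now has $mk=d$ neighbours in the corresponding blown-up block, and that block contains $mk=d$ vertices of the blown-up component. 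Thus $G_0$ has local degree $d$ and multiplicity $d$, not $m$. (A quick sanity check at $m=1$, $k=2$: your $G_0$ is the Szab\'o--Tardos example with three $4$-cycles and four blocks of size $3$, which has local degree $2$, not $1$.) Your subsequent $K_{m,m}$-joins only adjust block sizes and add new components of multiplicity $m$; they do nothing to reduce the local degree or multiplicity of the components already sitting inside $G_0$.

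The paper's argument runs in the opposite direction. It starts with the Szab\'o--Tardos construction at maximum degree $d$ (blocks of size $2d-1$, but local degree $d$), and then \emph{dissolves} each block $U$ by distributing its $2d-1$ vertices equitably into the $r=d/m+1$ blocks of a fresh copy of $(r-1)\cdot K_r(m)$ with its standard $r$-partition. The point is that the careful distribution---at most $m$ vertices of any component of the old graph into any new block---is what forces the multiplicity down to $m$, and the block-size bound $d+2m-\lceil(2m^2+m)/(d+m)\rceil$ comes out directly as $d+\lfloor(2d-1)/r\rfloor$.
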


In particular, when $d \ge 2m^2$, we can take the block sizes to satisfy $|V_i| \ge d + 2m - 1$ for every $V_i \in \mathcal{P}$ in Theorem \ref{locally-sparse-graph}. Theorem \ref{locally-sparse-graph} was previously proven when $m = 1$ by Bohman and Holzman \cite{BoHo}, with a slightly simplified construction described in \cite{AhHoHoSp}. On the other hand, even in the case of local degree $m = 1$, Loh and Sudakov \cite{LoSu} only showed that there exists some $\epsilon > 0$ such that for all sufficiently large $d$, if $|V_i| \ge d + d^{1 - \epsilon}$ for every $i$ then there exists an IT. Thus, there is still a large gap between Loh and Sudakov's upper bound and our lower bound on the optimal block sizes that guarantee the existence of an IT in graphs with a given maximum degree $d$ and local degree $m$.

Work on locally sparse graphs was inspired by a list coloring conjecture of Reed \cite{Re} about \emph{maximum color degree} (see Section \ref{section-list-coloring} for definitions). The construction of Bohman and Holzman \cite{BoHo} mentioned above was actually a counterexample to Reed's conjecture. Given a graph $H$ and list assignment $L$ for $V(H)$, the problem of finding a proper $L$-coloring of $H$ can easily be turned into the problem of finding an IT in an auxiliary graph $G$ with vertex partition $\mathcal{P}$ (see Section \ref{section-list-coloring} for details). The vertex-partitioned graphs $G, \mathcal{P}$ arising in this fashion are characterized by the property of having multiplicity one (in particular, they are locally sparse), together with a consistent adjacency property across all connected components (see Lemma \ref{list-cover-graph}). Reed \cite{Re} conjectured that if $H, L$ has maximum color degree $d$ and $|L(x)| \ge d+1$ for every $x \in V(H)$, then $H$ has a proper $L$-coloring. Reed and Sudakov \cite{ReSu} proved that there exists a proper $L$-coloring under the assumption that $|L(x)| \ge d+o(d)$ for every $x \in V(H)$. The asymptotic result of Loh and Sudakov \cite{LoSu} for locally sparse graphs mentioned above is a generalization of this result of Reed and Sudakov. Aharoni, Berger, and Ziv \cite{AhBeZi0} proved Reed's conjecture for chordal graphs. On the other hand, Bohman and Holzman \cite{BoHo} disproved Reed's conjecture for every $d \ge 2$, by constructing an example where $|L(x)| = d+1$ for every $x \in V(H)$ and there is no proper $L$-coloring. They describe their example by exhibiting an appropriate vertex-partitioned graph with no IT (see Theorem \ref{Bohman-Holzman}). We will use Lemma \ref{join-lemma} to produce new and smaller counterexamples to the conjecture of Reed, which involve $4(d+1)^2$ blocks compared to the $2(d+1)(d^2 + 1)$ blocks employed by Bohman and Holzman \cite{BoHo}.

Our next topics of interest are a conjecture of Aharoni, Alon, and Berger \cite{AhAlBe} and a question of Aharoni, Holzman, Howard, and Spr\"ussel \cite{AhHoHoSp}. Aharoni, Alon, and Berger \cite{AhAlBe} were interested in the existence of IT's in $K_{1,k}$-free graphs, meaning graphs that do not contain the star $K_{1,k}$ as an induced subgraph. They conjectured that for every $K_{1,k}$-free graph $G$ and partition $\mathcal{P} = \{V_1, \ldots, V_r\}$ of $V(G)$, if $|V_i| \ge d + k - 1$ for every $i$, then $G$ has an IT. This would improve on the $2d$ bound in Theorem \ref{max-degree-IT}, where the extremal constructions contain many induced complete bipartite graphs and thus many large induced stars. Aharoni, Alon, and Berger \cite{AhAlBe} proved that an IT exists if $|V_i| \ge \frac{d(2k - 3) + k - 1}{k - 1}$ for every $i$, and they proved their conjecture when $G$ is the line graph of a $(k - 1)$-uniform linear hypergraph. We will construct counterexamples to their conjecture for all $k \ge 3$. Next, recall that Aharoni, Holzman, Howard, and Spr\"ussel \cite{AhHoHoSp} proved that if $G$ has maximum degree $d > 2$, $|V_i| \ge 2d - 1$ for every $i$, and there is no IT, then $G$ contains the disjoint union of $2d - 1$ copies of the complete bipartite graph $K_{d,d}$. These authors asked whether, additionally, there always exists a copy of $K_{d,d}$ in $G$ that is contained in the union of two blocks $V_i, V_j$. We give a construction showing that the answer is negative.

Despite the negative answer to the question of Aharoni, Holzman, Howard, and Spr\"ussel \cite{AhHoHoSp}, we prove that a variation of their question does hold, namely when the graph $G$ is exactly the disjoint union of $2d - 1$ copies of $K_{d,d}$, as in Theorem \ref{Szabo-Tardos} (Corollary \ref{union-of-two-classes-Kdd}). In fact, this variation is relevant for proving one of our other results mentioned above, that all possible constructions $G, \mathcal{P}$ for Theorem \ref{Szabo-Tardos} can be derived by iteratively applying Lemma \ref{join-lemma} on $K_{d,d}$ (Corollary \ref{characterize-Kdd}). We discuss these latter two results in the more general settings of Theorem \ref{union-of-two-classes} and Theorem \ref{characterize-complete-bipartite}, in Sections \ref{section-2blocks} and \ref{section-characterize-complete-bipartite} respectively. The proof of Theorem \ref{union-of-two-classes} is slightly technical and deferred to Section \ref{big-proof}.

Our emphasis in this paper is on simple constructions with short and elementary proofs (with the only possible exception being Section \ref{big-proof}). However, our methods can be used to prove a wide variety of further theorems and generalizations of known results in other contexts. We describe some of these in Section~\ref{section-further}. In addition to Theorem \ref{union-of-two-classes} and Theorem \ref{characterize-complete-bipartite} mentioned above, in Section~\ref{section-further} we also note how Lemma \ref{join-lemma} and our construction method easily generalize to all simplicial complexes (Lemma \ref{join-lemma-complex}), thus significantly increasing their scope. We will briefly discuss what this generalized method yields for some simplicial complexes of interest, specifically independent transversals in hypergraphs and $H$-free transversals in graphs for a fixed regular graph $H$. Details of these topics plus more are given in \cite{WdThesis}.

The rest of this paper is organized as follows. In each section except for the last one, we will be applying Lemma \ref{join-lemma} iteratively in some way to construct graphs with no IT. In Section \ref{section-disjoint-complete-bipartite} we will explain how to derive constructions for Theorem \ref{Szabo-Tardos}, and in Section \ref{section-d-equals-2} we will prove Theorem \ref{cycles-no-IT}. Section \ref{section-locally-sparse-graphs-1} is devoted to the proof of Theorem \ref{locally-sparse-graph}, and in Section \ref{section-list-coloring} we will derive new counterexamples to the list coloring conjecture of Reed \cite{Re}. In Section \ref{section-questions}, we will construct counterexamples to the conjecture of Aharoni, Alon, and Berger \cite{AhAlBe} and to the question of Aharoni, Holzman, Howard, and Spr\"ussel \cite{AhHoHoSp}. In Section \ref{section-further}, we will discuss how Theorem \ref{union-of-two-classes} does verify an alternate version of their question, while also implying Theorem \ref{characterize-complete-bipartite}. We will also discuss how our method generalizes to all simplicial complexes. In Section \ref{big-proof}, we will prove Theorem \ref{union-of-two-classes}.

\section{Independent transversals in bounded degree graphs}
In this section, we will demonstrate the usefulness of Lemma \ref{join-lemma} by (re)proving Theorem \ref{Szabo-Tardos} and by proving Theorem \ref{cycles-no-IT}. Together with Theorem \ref{many-copies-Kdd} and Theorem \ref{cycles-length-1-mod-3} below (both proved in~\cite{AhHoHoSp}), these two theorems solve the following extremal problem: Find all the minimal graphs $G$ with maximum degree at most $d$ which have no IT with respect to some vertex partition $\mathcal{P}$ into blocks of size at least $2d - 1$. These graphs $G$ are the disjoint union of $2d - 1$ copies of $K_{d,d}$ when $d \neq 2$, and every disjoint union of three cycles of lengths $1$ modulo $3$ when $d = 2$.

\subsection{Disjoint unions of the complete bipartite graph $K_{d,d}$} \label{section-disjoint-complete-bipartite}
First we show how to prove Theorem \ref{Szabo-Tardos}. We will first show how to derive the vertex partitions of Szab\'o and Tardos \cite{SzTa}, then those of Jin \cite{Ji} and Yuster \cite{Yu}.

Let $G_0 = K_{d,d}$ and let $\mathcal{P}_0 = \{V_1, V_2\}$ be the standard bipartition of $G_0$. Note that $|V_1| = |V_2| = d$ and that $K_{d,d}$ has no IT with respect to $\mathcal{P}$. Our goal is to enlarge both $V_1$ and $V_2$ into blocks of size $2d - 1$ while preserving the property of the graph having no IT. We only show how to enlarge $V_1$, since enlarging $V_2$ can be done symmetrically afterwards. Throughout this procedure, we will have a distinguished ``small" vertex block $U_k$, starting with $U_0 = V_1$, that continually increases in size as we add copies of $K_{d,d}$, while all other vertex blocks (apart from $V_2$) always have size $2d - 1$. Eventually $U_k$ will also have size $2d - 1$, which finishes the required construction. 

Let $k \ge 1$, and let $G_{k-1}$ be the disjoint union of $k$ copies of $K_{d,d}$. Assume we have a partition $\mathcal{P}_{k-1}$ of $V(G_{k-1})$ such that: there exists a block $U_{k-1} \in \mathcal{P}_{k-1}$ of size $|U_{k-1}| = d+k-1$, and $|U| = 2d - 1$ for every $U \in \mathcal{P}_{k-1} \setminus \{V_2, U_{k-1}\}$. Let $G_k = G_{k-1} \sqcup K_{d,d}$, and let $\mathcal{P}' = \{V_1', V_2'\}$ be the standard bipartition of this newly added copy of $K_{d,d}$. We let $\mathcal{P}_k$ be the partition of $V(G_k)$ obtained by first taking the union of the partitions $\mathcal{P}_{k-1}$ and $\mathcal{P}'$, and then distributing the vertices of $U_{k - 1}$ into the blocks $V_1'$ and $V_2'$. Specifically, we distribute $d - 1$ vertices of $U_{k - 1}$ into $V_2'$ to make this modified block have size $2d - 1$, and we distribute the remaining $k$ vertices of $U_{k - 1}$ into $V_1'$ to make this modified block, which we now call $U_k$, have size $d + k$. Then $G_k$ has no IT with respect to $\mathcal{P}_k$ by Lemma \ref{join-lemma}, so the procedure may continue. Eventually when $k = d - 1$, we have that all blocks of $\mathcal{P}_{d-1}$ (apart from $V_2$) have size $2d - 1$, so we may stop. Enlarging $V_2$ in symmetric fashion, by adding another $d - 1$ copies of $K_{d,d}$, leads to a partition $\mathcal{P}$ satisfying Theorem \ref{Szabo-Tardos}.

\begin{figure}
	\centering
	\includegraphics[scale=0.8]{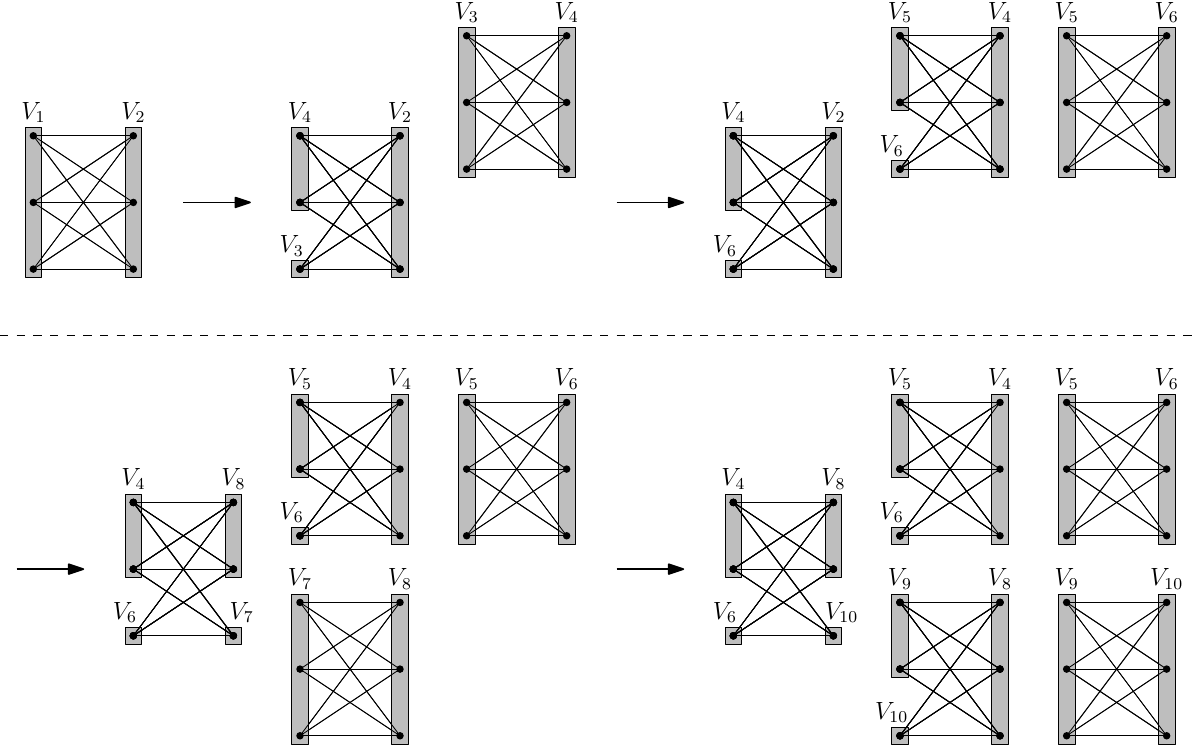}
	\caption{Deriving the Szab\'o-Tardos construction for $d = 3$ by repeatedly applying Lemma \ref{join-lemma}. We start with $K_{3,3}$ and the standard bipartition $\{V_1, V_2\}$. In the first step, we add a new copy of $K_{3,3}$ and its standard bipartition $\{V_3, V_4\}$, and then we distribute one vertex of $V_1$ into $V_3$ and the other two vertices of $V_1$ into $V_4$. In the second step, we add a new copy of $K_{3,3}$ with its standard bipartition $\{V_5, V_6\}$, and then we distribute two vertices of $V_3$ into $V_5$ and the other two vertices of $V_3$ into $V_6$ as shown. The remaining two steps are analogous.}
	\label{complete-bipartite-figure}
\end{figure}

The partition of Szab\'o and Tardos \cite{SzTa} can be obtained exactly from this procedure by choosing, at each step $k$, the $k$ vertices of $U_{k - 1}$ that get distributed into $V_1'$ to include one vertex from each of the $k$ copies of $K_{d,d}$ in $G_{k - 1}$. See Figure \ref{complete-bipartite-figure} for an illustration of this procedure when $d = 3$. Eventually when $k = d - 1$, the distinguished block $U_{d-1}$ forms a special ``leftover" block in the construction Szab\'o and Tardos (which they label as $V_{q+1}$). We get their additional leftover block (which they label as $V_{2q+2}$) after doing the symmetric procedure on $V_2$.

For $d \ge 4$, the partition of Szab\'o and Tardos is not the only possible result of the above procedure. Indeed, Szab\'o and Tardos themselves observed that when $d \ge 4$ is a power of $2$, their partitions $\mathcal{P}$ for Theorem \ref{Szabo-Tardos} differ from those of Jin \cite{Ji} and Yuster \cite{Yu}, despite all of them being on the same graph $G$ (namely the disjoint union of $2d - 1$ copies of $K_{d,d}$). Due to the somewhat arbitrary nature in which we can distribute vertices according to Lemma \ref{join-lemma}, the fact that there exist such different partitions of the same graph $G$ with no IT is not surprising. 

When $d$ is a power of $2$, the partitions of Yuster \cite{Yu} can be derived using the following slight modification of the above procedure. Instead of distributing vertices so as to always have one distinguished ``small" block, at each step we distribute the vertices into $V_1'$ and $V_2'$ in equitable fashion, with half of them going to $V_1'$ and half of them going to $V_2'$. The block of $G_{k-1}$ that we choose to distribute to $V_1'$ and $V_2'$ at each step is any of the current smallest ones. The partitions of Jin \cite{Ji} are special cases of those of Yuster. Specifically, one performs this described procedure in such a way as to eventually make one copy of $K_{d,d}$ ``colorful", containing one vertex from each of the $2d$ blocks. Thus we can also derive Jin's partitions using Lemma \ref{join-lemma}. One could come up with many other schemes that result in partitions $\mathcal{P}$ satisfying Theorem \ref{Szabo-Tardos}. In fact, we will prove all possible constructions for Theorem \ref{Szabo-Tardos} can be derived in this kind of fashion of applying Lemma \ref{join-lemma} iteratively on $K_{d,d}$ with the standard bipartition (Corollary \ref{characterize-Kdd}).

We remark that we do not have to stop at $2d - 1$ copies of $K_{d,d}$. Indeed, we may continually add further copies of $K_{d,d}$, and there will by necessity be some blocks of size greater than $2d - 1$ after this. However, if the number of $K_{d,d}$ components is $k$, then the resulting graph has $2dk$ vertices partitioned into $k+1$ blocks, so in particular there will always be a block of size at most $2d - 1$, in accordance with Theorem \ref{max-degree-IT}.

\subsection{The case d = 2: Disjoint unions of cycles of lengths 1 modulo 3} \label{section-d-equals-2}

Next, we prove Theorem \ref{cycles-no-IT}. Recall that Aharoni, Holzman, Howard, and Spr\"ussel \cite{AhHoHoSp} proved the following Brooks-type theorem.

\begin{thm}[\cite{AhHoHoSp}] \label{many-copies-Kdd}
Let $G$ be a graph with maximum degree $d > 2$, and let $\mathcal{P} = \{V_1, \ldots, V_r\}$ be a partition of $V(G)$. If $|V_i| \ge 2d - 1$ for every $i$ but $G$ has no IT, then $G$ contains the disjoint union of $2d - 1$ copies of $K_{d,d}$.
\end{thm}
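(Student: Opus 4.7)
The plan is to proceed via the topological framework for independent transversals. Let $I(G)$ denote the independence complex of $G$ and $\eta(I(G))$ its topological connectedness. Two standard ingredients are the Aharoni--Berger--Ziv topological Hall theorem, which says that $(G,\mathcal{P})$ has an IT provided $\eta(I(G[\bigcup_{i\in S}V_i])) \ge |S|$ for every $S \subseteq [r]$, and Meshulam's bound $\eta(I(H)) \ge |V(H)|/(2d)$ for any graph $H$ with $\Delta(H) \le d$. Together these imply Theorem \ref{max-degree-IT}; for the present theorem one needs a stability strengthening that extracts structural information from the near-equality case of Meshulam's bound.

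Under the hypotheses, the absence of an IT yields, via the Hall criterion, a minimal $S \subseteq [r]$ for which the inequality fails. Setting $H = G[\bigcup_{i \in S}V_i]$, one has $\eta(I(H)) < |S|$ and, combining the block-size hypothesis with Meshulam,
\[
(2d-1)|S| \;\le\; |V(H)| \;\le\; 2d|S|-1.
\]
Thus $H$ lies in the narrow extremal window of Meshulam's bound, and the task is to show that in this regime $H$ contains $2d-1$ vertex-disjoint copies of $K_{d,d}$.

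For the stability step I would induct on $|V(H)|$ via the standard link--deletion recursion
\[
\eta(I(H)) \;\ge\; \min\bigl\{\eta(I(H-v)),\; \eta(I(H - N[v])) + 1\bigr\},
\]
applied at a vertex $v$ of maximum degree. Equality in the ``link'' branch forces $v$ and its $d$-neighborhood to be embedded in a rigid complete bipartite configuration, and this rigidity is exactly what pins down an induced copy of $K_{d,d}$ in $H$. The hypothesis $d > 2$ is essential at this step: for $d = 2$ the analogous rigid blockers are cycles of length $1 \pmod{3}$ rather than $K_{2,2}$, matching Theorem \ref{cycles-no-IT}. Having located one induced $K_{d,d}$, I would delete its $2d$ vertices, which drops $|V(H)|$ by exactly $2d$ and $\eta(I(H))$ by at most $1$, so the residual subgraph remains near-extremal and a further $K_{d,d}$ can be excised. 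Iterating this excision until the deficit budget is used up produces $2d-1$ pairwise vertex-disjoint copies of $K_{d,d}$ inside $G$.

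The main obstacle is the stability analysis itself. Meshulam's basic bound is a clean topological induction, but tracing the equality case to genuinely locate an induced $K_{d,d}$---and then verifying that each excision preserves the deficit so that the process yields $2d-1$ disjoint copies rather than getting stuck earlier---is delicate. Bookkeeping the disjointness of successively excised copies, together with the rigidity argument that truly requires $d > 2$, is where the technical weight of the proof lies; in particular, one must rule out the possibility that after removing a $K_{d,d}$ the residual instance jumps out of the near-extremal regime before all $2d-1$ copies have been extracted.
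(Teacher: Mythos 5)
First, note that the paper does not prove this statement at all: Theorem \ref{many-copies-Kdd} is quoted as a known result of Aharoni, Holzman, Howard, and Spr\"ussel \cite{AhHoHoSp}, proved there by topological methods, and it is used here only as a black box (e.g.\ in Corollaries \ref{union-of-two-classes-Kdd} and \ref{characterize-Kdd}). So the relevant comparison is with the proof in \cite{AhHoHoSp}. Your general framework (topological Hall criterion plus Meshulam-type connectedness bounds, and the arithmetic window $(2d-1)|S| \le |V(H)| \le 2d|S|-1$ for a failing set $S$) is indeed the right setting and matches the spirit of that proof.

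However, there is a genuine gap: the entire content of the theorem is concentrated in the step you state in one sentence, namely that ``equality in the link branch forces $v$ and its $d$-neighborhood to be embedded in a rigid complete bipartite configuration.'' The link--deletion recursion $\eta(I(H)) \ge \min\{\eta(I(H-v)), \eta(I(H-N[v]))+1\}$ gives Meshulam's bound, but near-equality in such a recursion does not by itself pin down any local structure, and extracting an induced $K_{d,d}$ from near-tightness (and showing this fails only for $d=2$, where the blockers are cycles of length $1 \bmod 3$) is precisely the main technical achievement of \cite{AhHoHoSp}; asserting it is assuming the theorem. What is actually needed, and what that paper proves, is a \emph{defect} strengthening of Meshulam's bound of the form $\eta(I(H)) \ge \bigl(|V(H)| + c(H)\bigr)/(2d)$, where $c(H)$ counts the $K_{d,d}$-components (equivalently, denominator $2d-1$ for graphs with no $K_{d,d}$ component when $d>2$); the count of $2d-1$ copies then follows by one global application of this bound together with $\eta(I(H)) \le |S|-1$ and $|V(H)| \ge (2d-1)|S|$, rather than by your step-by-step excision. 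Your excision bookkeeping also has unproved claims: deleting a found $K_{d,d}$ drops $\eta$ by exactly $1$ only because any $K_{d,d}$ subgraph of a graph of maximum degree $d$ is automatically a full component (so the complex is a join), a fact you do not invoke, and the assertion that the residual instance ``remains near-extremal'' after each excision is not justified and is exactly the kind of accounting the defect bound is designed to replace. As written, the proposal is a plausible outline of the known proof strategy with its central lemma missing.
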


Thus, it is not a coincidence that for all previous constructions achieving block sizes $|V_i| \ge 2d - 1$ and no IT \cite{Ji, SzTa, Yu}, the underlying graph $G$ is the disjoint union of $2d - 1$ copies of $K_{d,d}$. This is despite different authors sometimes describing different vertex partitions $\mathcal{P}$. As with Brooks' theorem, the case of maximum degree $d = 2$ is slightly more complicated and interesting than the case $d > 2$. When $G$ is one cycle and $|V_i| = 3$ for every $i$, the existence of an IT was a conjecture of Du, Hsu, and Hwang \cite{DuHsHw}. Erd\H{o}s \cite{Er} popularized a stronger 3-colorability version of the conjecture, and this became known as the ``cycle-plus-triangles problem". The conjecture was proven by Fleischner and Stiebitz \cite{FlSt} and by Sachs \cite{Sa}. For more general graphs with maximum degree $2$, Theorem \ref{Szabo-Tardos} shows that when $G$ contains the disjoint union of three $4$-cycles $C_4$, the condition $|V_i| \ge 3$ does not guarantee the existence of an IT. But in contrast with Theorem \ref{many-copies-Kdd}, three disjoint $4$-cycles is not the only obstruction to the existence of an IT when $d = 2$. Aharoni, Holzman, Howard, and Spr\"ussel  \cite{AhHoHoSp} also proved the following result suggesting that other cycle lengths are possible obstructions to the existence of an IT.

\begin{thm}[\cite{AhHoHoSp}] \label{cycles-length-1-mod-3}
Let $G$ be a graph with maximum degree $d = 2$, and let $\mathcal{P} = \{V_1, \ldots, V_r\}$ be a partition of $V(G)$. If $|V_i| \ge 3$ for every $i$ but $G$ has no IT, then $G$ contains the disjoint union of three cycles of lengths $1 \pmod{3}$.
\end{thm}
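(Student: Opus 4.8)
The plan is to run the topological connectivity method for independent transversals. For a graph $H$ let $\mathrm{Ind}(H)$ denote its independence complex, and for a simplicial complex $\mathcal{C}$ let $\eta(\mathcal{C})$ denote two more than its topological connectivity. The key input is the topological Hall theorem of Aharoni--Haxell and Meshulam: if $\eta(\mathrm{Ind}(G[\bigcup_{i \in I}V_i])) \ge |I|$ for every $I \subseteq [r]$, then $G$ has an IT (this is the topological statement underlying Theorem~\ref{max-degree-IT}). Since $G$ has no IT, the contrapositive produces a nonempty $I \subseteq [r]$ with $\eta(\mathrm{Ind}(G[V_I])) \le |I| - 1$, where $V_I = \bigcup_{i\in I}V_i$. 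As $|V_i| \ge 3$ for all $i$ we have $|V_I| \ge 3|I|$, hence $\eta(\mathrm{Ind}(G[V_I])) \le |V_I|/3 - 1$.

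Next I would dissect $H := G[V_I]$. Since $H$ has maximum degree at most $2$, it is a disjoint union of paths and cycles. Because $\mathrm{Ind}$ of a disjoint union of graphs is the topological join of their independence complexes and $\eta$ is additive over joins, $\eta(\mathrm{Ind}(H))$ equals the sum of $\eta(\mathrm{Ind}(\cdot))$ over the connected components of $H$. Here I would invoke the classical homotopy computations (due to Kozlov) for independence complexes of paths and cycles, which yield: $\eta(\mathrm{Ind}(P_n)) = \infty$ if $n \equiv 1 \pmod 3$ and $\eta(\mathrm{Ind}(P_n)) \ge n/3$ otherwise; and $\eta(\mathrm{Ind}(C_n)) \ge n/3$ if $n \not\equiv 1 \pmod 3$, while $\eta(\mathrm{Ind}(C_n)) = (n-1)/3$ if $n \equiv 1 \pmod 3$. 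The crucial feature is that a cycle of length $\equiv 1 \pmod 3$ is ``deficient'' by exactly $1/3$ relative to the generic rate $n/3$, whereas a path of length $\equiv 1 \pmod 3$ makes its independence complex contractible.

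Then the conclusion follows by bookkeeping. First, $H$ has no path component of length $\equiv 1 \pmod 3$, since such a component would force $\eta(\mathrm{Ind}(H)) = \infty$, contradicting $\eta(\mathrm{Ind}(H)) \le |V_I|/3 - 1 < \infty$. Let $t$ be the number of cycle components of $H$ whose length is $\equiv 1 \pmod 3$. Summing the component estimates above, and using that the total vertex count is $|V_I|$, we obtain $\eta(\mathrm{Ind}(H)) \ge |V_I|/3 - t/3$. Comparing with $\eta(\mathrm{Ind}(H)) \le |V_I|/3 - 1$ gives $t \ge 3$. A cycle of length $\equiv 1 \pmod 3$ has length at least $4$, and distinct components of $H = G[V_I]$ are vertex-disjoint subgraphs of $G$, so any three of these $t$ cycle components form a disjoint union of three cycles of lengths $\equiv 1 \pmod 3$ inside $G$, which is exactly what is claimed.

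The step I expect to require the most care is assembling the exact $\eta$-values (equivalently, topological connectivities) of $\mathrm{Ind}(P_n)$ and $\mathrm{Ind}(C_n)$ in all residue classes modulo $3$, since the whole argument hinges on the precise ``$-1/3$'' deficit for bad cycles and the ``$+\infty$'' behaviour for bad paths; everything else is elementary counting. It is worth noting that one could in principle avoid topology and argue via a minimal counterexample together with a local analysis around a block of size exactly $3$, paralleling the combinatorial proof of Theorem~\ref{max-degree-IT}, but the connectivity argument above is considerably shorter and more transparent.
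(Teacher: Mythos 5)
Your proof is correct, but note that this paper does not actually prove Theorem \ref{cycles-length-1-mod-3}: it is quoted from Aharoni, Holzman, Howard, and Spr\"ussel \cite{AhHoHoSp}, whose proof is topological, and your argument is essentially a reconstruction of that same route (topological Hall theorem, decomposition of the max-degree-$2$ induced subgraph into paths and cycles, Kozlov's homotopy types of the independence complexes, and the counting $t \ge 3$). Two small points of hygiene: you only need the join inequality $\eta(\mathcal{C} \ast \mathcal{D}) \ge \eta(\mathcal{C}) + \eta(\mathcal{D})$ rather than exact additivity, and ``length'' of a path component should be read as its number of vertices so that the contractible case is $n \equiv 1 \pmod 3$; with those readings the deficit bookkeeping and the conclusion that three vertex-disjoint cycles of length $1$ modulo $3$ appear in $G$ are sound.
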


We now prove Theorem \ref{cycles-no-IT} which shows that, indeed, all disjoint unions of three cycles of lengths $1 \pmod{3}$ are minimal obstructions to existence of an IT in the context of graphs with maximum degree $d = 2$ and block sizes $|V_i| \ge 3$. To prove the theorem, we employ the following lemma that was also proven and utilized by Aharoni, Holzman, Howard, and Spr\"ussel  \cite{AhHoHoSp} in their constructions.

\begin{lem}[\cite{AhHoHoSp}] \label{cycle-IT-lemma}
Let $\ell = 3r + 1$ with $r \ge 1$. There exists a partition $\mathcal{P} = \{V_1, \ldots, V_{r+1} \}$ of the vertices of the $\ell$-cycle $C_{\ell}$ such that $|V_i| = 3$ for $1 \le i \le r - 1$, $|V_r| = |V_{r+1}| = 2$, and $C_{\ell}$ has no IT.
\end{lem}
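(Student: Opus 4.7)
The plan is to give an explicit construction of $\mathcal{P}$ and then verify directly that no IT exists by a forcing cascade around the cycle. For $r = 1$, I would take the standard bipartition of $C_4 = K_{2,2}$, which plainly has no IT. For $r \ge 2$, I would label the vertices of $C_\ell$ cyclically as $v_0, v_1, \ldots, v_{3r}$ and propose the partition
\[
V_1 = \{v_0, v_2, v_{3r-1}\}, \quad V_i = \{v_{3i-1}, v_{3i+1}, v_{3i+3}\} \text{ for } 2 \le i \le r - 1,
\]
together with the pairs $V_r = \{v_1, v_3\}$ and $V_{r+1} = \{v_4, v_6\}$. A short residue calculation modulo $3$ confirms that these sets form a partition of $\{v_0, \ldots, v_{3r}\}$ with exactly the prescribed block sizes.

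To show that no IT exists, I would suppose for contradiction that $(t_1, \ldots, t_{r+1})$ is an IT with $t_i \in V_i$ and analyze the three possible choices for $t_1$. If $t_1 = v_2$, then both elements of $V_r = \{v_1, v_3\}$ are adjacent to $v_2$, so no valid $t_r$ exists. If $t_1 = v_{3r-1}$, its two neighbors $v_{3r-2}, v_{3r}$ both lie in $V_{r-1}$, forcing $t_{r-1}$ to be the remaining vertex $v_{3r-4}$; iterating this, the two neighbors of $v_{3i-1}$ both lie in $V_{i-1}$, forcing $t_i = v_{3i-1}$ successively for $i = r-1, r-2, \ldots, 2$. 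In particular $t_2 = v_5$, whose two neighbors $v_4, v_6$ are precisely the two elements of $V_{r+1}$, leaving no valid $t_{r+1}$. Finally, if $t_1 = v_0$, the cascade propagates in the opposite direction: $t_r = v_3$ (since $v_1$ is adjacent to $v_0$), then $t_{r+1} = v_6$ (since $v_4$ is adjacent to $v_3$), then $t_2 = v_9$ (since $v_5, v_7 \in V_2$ are both adjacent to $v_6$), and more generally $t_i = v_{3i+3}$ for $i = 2, \ldots, r-1$. The cascade terminates when some chosen vertex equals $v_{3r}$ (namely $t_{r+1}$ for $r = 2$ and $t_{r-1}$ for $r \ge 3$), but $v_{3r}$ is adjacent to $v_0 = t_1$ via the cyclic wrap-around edge, a contradiction.

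The main obstacle will be to verify rigorously that each step of the two forcing cascades is truly forced. This reduces to the index identities $\{v_{3i-2}, v_{3i}\} \subseteq V_{i-1}$ and $\{v_{3i+2}, v_{3i+4}\} \subseteq V_{i+1}$ for $2 \le i \le r-1$, which are short modular-arithmetic checks using the definition of the middle triples, together with the wrap-around identities at the two ends of the cascade. The small cases $r = 2, 3$, where the cascade has at most one or two steps, can be checked directly and provide a useful sanity test before carrying out the general argument.
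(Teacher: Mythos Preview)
Your explicit construction and forcing-cascade verification are correct. The blocks do partition $\{v_0,\ldots,v_{3r}\}$ (a residue-mod-$3$ count confirms this), and each of the three choices for $t_1$ leads to a contradiction as you describe. One small slip: the index identities you name at the end actually hold for $3 \le i \le r$ (first identity) and $1 \le i \le r-2$ (second), not uniformly for $2 \le i \le r-1$; but your cascades only invoke them in the correct ranges, so the argument stands once the indexing is tidied.

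This is, however, a genuinely different route from the paper's. The paper proves the lemma by induction on $r$: given a partition of $C_{3r-2}$ with no IT from the induction hypothesis, it adjoins a disjoint $K_{1,2}$ via Lemma~\ref{join-lemma} (distributing the centre vertex into one of the size-$2$ blocks to make it size~$3$), and then applies an edge-replacement step (Lemma~\ref{edge-delete-lemma}) to delete one edge of the old cycle and splice in two edges through the endpoints of the $K_{1,2}$, producing $C_{3r+1}$ with no IT. The paper explicitly notes that Aharoni, Holzman, Howard, and Spr\"ussel originally proved this lemma by writing down an explicit partition, and presents its inductive argument as an alternative; your approach is in the spirit of the former. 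The paper's inductive proof has the advantage of fitting its central theme---building extremal examples by iterating Lemma~\ref{join-lemma}---and of making clear why many different partitions satisfy the lemma; your explicit approach is self-contained and avoids the two auxiliary lemmas entirely.
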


\begin{proof}[Proof of Theorem \ref{cycles-no-IT}]
Let $\mathcal{P}_1$, $\mathcal{P}_2$, $\mathcal{P}_3$ be partitions of the vertices of $C_{\ell_1}, C_{\ell_2}, C_{\ell_3}$ given by Lemma \ref{cycle-IT-lemma}. Let $\{X_1, X_2\} \subseteq \mathcal{P}_1$, $\{Y_1, Y_2\} \subseteq \mathcal{P}_2$, $\{Z_1, Z_2\} \subseteq \mathcal{P}_3$ be the blocks of size $2$ in each partition. Now let $\mathcal{P}'$ be the partition of the vertices of $G' = C_{\ell_1} \sqcup C_{\ell_2}$ obtained by taking the union of $\mathcal{P}_1$ and $\mathcal{P}_2$ while distributing one vertex of $X_1$ into $Y_1$ and the other vertex of $X_1$ into $Y_2$. By Lemma \ref{join-lemma}, $G'$ has no IT with respect to $\mathcal{P}'$. Finally, let $\mathcal{P}$ be the partition of the vertices of $G = G' \sqcup C_{\ell_3}$ obtained by taking the union of $\mathcal{P}'$ and $\mathcal{P}_3$ while distributing one vertex of $X_2$ into $Z_1$ and the other vertex of $X_2$ into $Z_2$. By Lemma \ref{join-lemma}, $G$ has no IT with respect to $\mathcal{P}$, and we have $|V_i| = 3$ for every $V_i \in \mathcal{P}$.
\end{proof}

Aharoni, Holzman, Howard, and Spr\"ussel \cite{AhHoHoSp} proved Lemma \ref{cycle-IT-lemma} by describing the partition $\mathcal{P}$ explicitly. Here we give an alternative proof of Lemma \ref{cycle-IT-lemma} via an iterative approach similar to applications of Lemma \ref{join-lemma}. One could use this method to find a larger class of partitions $\mathcal{P}$ satisfying Lemma \ref{cycle-IT-lemma} than those described by the said authors. Our main tool is the following simple lemma.

\begin{lem} \label{edge-delete-lemma}
Let $G$ be a graph and let $\mathcal{P} = \{V_1, \ldots, V_r\}$ be a partition of $V(G)$. Suppose that $G$ has no IT with respect to $\mathcal{P}$. Let $uv \in E(G)$ be an edge with $u \in V_i$, $v \in V_j$, and $i \neq j$. Let $k \in \{1, \ldots, r\} \backslash \{i,j\}$, and let $F$ be a set of edges (possibly some in $G$) connecting every vertex in $V_k$ to either $u$ or $v$. Then the graph $H = G - uv \cup F$ has no IT with respect to $\mathcal{P}$.
\end{lem}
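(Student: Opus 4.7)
The plan is to prove Lemma \ref{edge-delete-lemma} by contradiction, comparing the edge sets of $G$ and $H$ carefully. Suppose $H$ has an IT $T = \{v_1, \ldots, v_r\}$ with $v_\ell \in V_\ell$ for each $\ell$. Since the operation producing $H$ only deletes the single edge $uv$ and adds the edges of $F$, we have $E(G) \setminus E(H) \subseteq \{uv\}$; note that $uv \notin F$ because every edge of $F$ has an endpoint in $V_k$, while $u \in V_i$ and $v \in V_j$ with $k \notin \{i,j\}$.

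Next, I would use the hypothesis that $G$ has no IT. The set $T$ is a transversal of $\mathcal{P}$, so it cannot be independent in $G$; thus some edge $e$ of $G$ has both endpoints in $T$. Since $T$ is independent in $H$, this edge must lie in $E(G) \setminus E(H)$, and by the observation above this forces $e = uv$. In particular, $u, v \in T$, so $v_i = u$ and $v_j = v$.

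Now I would exploit the vertex $v_k \in V_k \cap T$. By assumption, $F$ contains an edge from $v_k$ to $u$ or from $v_k$ to $v$; either such edge lies in $E(H)$ and has both endpoints in $T$, contradicting independence of $T$ in $H$. This contradiction completes the proof.

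There is no genuine obstacle here; the argument is a short bookkeeping exercise. The only subtlety worth flagging is the remark that $uv \notin F$, so that $uv$ really is absent from $E(H)$ — otherwise one could not conclude $u, v \in T$ from the existence of a $G$-edge inside $T$. With that point handled, the lemma falls out of one application of the IT-freeness of $G$ combined with the structure of $F$.
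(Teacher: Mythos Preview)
Your argument is correct and follows essentially the same route as the paper's proof: assume an IT of $H$, use that $G$ has no IT to force $v_i=u$ and $v_j=v$, then derive a contradiction from the edge of $F$ incident to $v_k$. You simply spell out in more detail the step the paper leaves implicit, namely that $uv \notin F$ so $uv$ is genuinely the only edge of $G$ missing from $H$.
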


\begin{proof}
Assume for contradiction that $H$ has an IT $\{v_1, \ldots, v_r\}$ with respect to $\mathcal{P}$, where $v_\ell \in V_\ell$ for $1 \le \ell \le r$. Since $G$ has no IT, we must have that $v_i = u$ and $v_j = v$. But either $uv_k$ or $vv_k$ lies in $F$, which implies that $\{v_1, \ldots, v_r\}$ is not independent in $H$, a contradiction.
\end{proof}

\begin{proof}[Proof of Lemma \ref{cycle-IT-lemma}]
We use induction on $r$. For $r = 1$, we take $\mathcal{P}$ to be the standard bipartition of the $4$-cycle $C_4$, which satisfies the requirements. Let $r \ge 2$, and let $\mathcal{P}' = \{V_1, \ldots, V_r\}$ be a partition for the cycle $C_{3r - 2}$ given by the induction hypothesis. Let $\mathcal{Q} = \{ U = \{ u_1 \}, V_{r+1} = \{v_1,v_2\} \}$ be the standard bipartition of the path $K_{1,2}$. Let $G = C_{3r - 2} \sqcup K_{1,2}$, and let $\mathcal{P} = \{V_1, \ldots, V_{r - 2}, V_{r-1} \cup \{u_1\}, V_r, V_{r+1} \}$. By Lemma \ref{join-lemma}, $G$ has no IT with respect to $\mathcal{P}$. Let $e = uv$ be any edge of $C_{3r - 2}$, and let $F = \{ uv_1, vv_2 \}$. Then $G - uv \cup F$ is the $(3r + 1)$-cycle $C_{3r+1}$, and by Lemma \ref{edge-delete-lemma} $C_{3r+1}$ has no IT with respect to $\mathcal{P}$. See Figure \ref{seven-cycle} for illustration when $r = 2$. The blocks in $\mathcal{P}$ have the required sizes, so this finishes the proof.
\end{proof}

\begin{figure}
	\centering
	\includegraphics[scale=1]{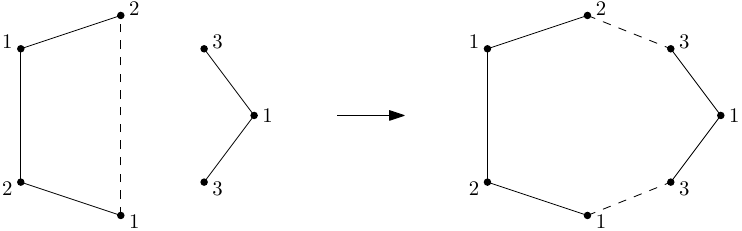}
	\caption{The disjoint union of a $4$-cycle $C_4$ and a $K_{1,2}$ (with no IT) being transformed into a $7$-cycle $C_7$ (with no IT). The label $i$ next to each vertex indicates which block $V_i$ the vertex lies in.}
	\label{seven-cycle}
\end{figure}

\section{Locally sparse graphs and list coloring} \label{section-locally-sparse-graphs}

In this section, we prove Theorem \ref{locally-sparse-graph} on locally sparse graphs, and then we construct new counterexamples to Reed's list coloring conjecture \cite{Re}, which was originally disproved by Bohman and Holzman \cite{BoHo}.

\subsection{Locally sparse graphs} \label{section-locally-sparse-graphs-1}

Let $G$ be a graph and let $\mathcal{P} = \{V_1, \ldots, V_r\}$ be a partition of $V(G)$. Recall that the \textit{local degree} of $G$ is the maximum, over all $v \in V(G)$ and $V_i \in \mathcal{P}$ not containing $v$, of the number of edges between $v$ and $V_i$. Also, the \textit{multiplicity} of $G$ is the maximum, over all connected components $C$ of $G$ and $V_i \in \mathcal{P}$, of the number of vertices in $C \cap V_i$. Loh and Sudakov \cite{LoSu} proved that for a graph $G$ and partition $\mathcal{P} = \{V_1, \ldots, V_r\}$ of $V(G)$, if $G$ has maximum degree $d$, local degree $m = o(d)$, and $|V_i| = d + o(d)$ for every $i$, then $G$ has an IT. We now prove Theorem \ref{locally-sparse-graph}, which complements this asymptotic result by giving a lower bound on the optimal error term $o(d)$ in $|V_i|$ depending on the local degree $m$.

Before proving Theorem \ref{locally-sparse-graph}, we note one obvious construction which will be used in the proof. Fix $m \ge 1$ and $r \ge 2$, and let $K_r(m)$ denote the complete $r$-partite graph with $m$ vertices in each part. Let $G_0$ be the disjoint union of $r - 1$ copies of $K_r(m)$, and define the partition $\mathcal{P}_0 = \{V_1, \ldots, V_r\}$ to be a standard $r$-partition of $G_0$, consisting of $r$ independent sets of size $(r-1)m$. Letting $d = (r - 1)m$ denote the maximum degree of $G_0$, we have that $|V_i| = (r - 1)m = d$ for every $i$, the local degree and multiplicity of $G_0$ are both $m$, and there is no IT with respect to $\mathcal{P}_0$.  

\begin{proof}[Proof of Theorem \ref{locally-sparse-graph}]
Assume that $d = (r - 1)m$, where $r \ge 2$. We start with any graph $H$ and partition $\mathcal{Q}$ coming from Theorem \ref{Szabo-Tardos}, say the Szab\'o-Tardos construction, in which $H$ is the disjoint union of $2d-1$ copies of $K_{d,d}$. Then $|U| = 2d - 1$ for every $U \in \mathcal{Q}$ and there is no IT. This graph has local degree and multiplicity $d$, which may be larger than $m$. We fix this by applying Lemma \ref{join-lemma} again. Fix a block $U \in \mathcal{Q}$. We add a copy of the vertex-partitioned graph $G_0, \mathcal{P}_0$ above, and then distribute vertices of $U$ into the $r$ blocks of $\mathcal{P}_0$. We do this equitably so that each of the $r$ blocks of $\mathcal{P}_0$ gets enlarged by at least  
\begin{align*}
	\left\lfloor \frac{2d - 1}{r} \right\rfloor = \left\lfloor \frac{2d-1}{d/m + 1} \right\rfloor =  \left\lfloor \frac{2m(d+m) - 2m^2 - m}{d+m} \right\rfloor = 2m - \left\lceil \frac{2m^2+m}{d+m} \right\rceil.
\end{align*}
Moreover, we do this in such a way that at most $m$ vertices of any connected component of $H$ get distributed to any block of $\mathcal{P}_0$. This is possible by a greedy distribution process, since at most $d$ vertices of $U$ lie in any connected component of $H$, and $\left\lceil \frac{d}{r} \right\rceil = \left\lceil \frac{(r - 1)m}{r} \right\rceil \le m$. After doing this procedure iteratively for every $U \in \mathcal{Q}$, we get a graph $G$ and partition $\mathcal{P}$ with local degree and multiplicity $m$, $|V_i| \ge d + 2m - \left\lceil \frac{2m^2+m}{d+m} \right\rceil$ for every $V_i \in \mathcal{P}$, and no IT.
\end{proof}

Note that the number of blocks in the graph constructed in the previous proof is $b(H) \cdot r$, where $b(H)$ denotes the number of blocks of the starting graph $H$. Slightly smaller constructions for Theorem \ref{locally-sparse-graph} are possible. Instead of the Szab\'o-Tardos construction, we could have started with any vertex-partitioned graph $H$ with no IT where every block has size at least $\left\lfloor \frac{2d-1}{r} \right\rfloor r = \left( 2m - \left\lceil \frac{2m^2+m}{d+m} \right\rceil \right) \left( \frac{d}{m} + 1 \right)$, and where every component has at most $d+1$ vertices from any block. Following the argument in Section \ref{section-disjoint-complete-bipartite}, one could form a smaller such graph $H$ by taking the disjoint union of possibly fewer than $2d - 1$ copies of $K_{d,d}$, and also possibly deleting some vertices. For example, when $m = 1$ we only need blocks of size at least $d+1$, and we can achieve these block sizes on the graph $H = K_{2,2} \sqcup K_{d,d} \sqcup K_{d,d}$ with $4$ blocks. Following the proof of Theorem \ref{locally-sparse-graph} on this latter graph $H$ yields a locally sparse graph constructed in \cite{AhHoHoSp}.

\begin{prob}
What is the optimal bound $M = M(d, m)$ such that whenever $G$ is a graph with maximum degree $d$ and $\mathcal{P} = \{V_1, \ldots, V_r\}$ is a partition of $V(G)$ with local degree $m$ and $|V_i| \ge M$ for every $i$, there exists an IT?
\end{prob}

\subsection{List coloring} \label{section-list-coloring}

Now we discuss list colorings and relate them to IT's in locally sparse graphs. Given a graph $H$ and a list assignment $L = (L(x) : x \in V(H))$ for $V(H)$, where each $L(x)$ is a set of colors, an \textit{$L$-coloring} is a map $\phi$ on $V(H)$ such that $\phi(x) \in L(x)$ for every vertex $x \in V(H)$. The $L$-coloring $\phi$ is \textit{proper} if $\phi^{-1}(c)$ is an independent set of $H$ for every color $c \in \bigcup_{x \in V(H)} L(x)$. As observed in \cite{Ha2}, the problem of finding a proper $L$-coloring of $H$ can be turned into a problem of finding an IT in an auxiliary graph $G$ with respect to some vertex partition $\mathcal{P}$. Specifically, $G$ has vertex set $\{(x, c) : x \in V(H), c \in L(x) \}$, and the pairs $(x, c)$ and $(y, c')$ form an edge in $G$ whenever $xy \in E(H)$ and $c = c'$. The partition $\mathcal{P}$ of $V(G)$ is $\{V_x : x \in V(H) \}$ where $V_x = \{(x, c) : c \in L(x)\}$. Then every proper $L$-coloring $\phi$ of $H$ corresponds to a unique IT $\{(x, \phi(x)) : x \in V(H)\}$ of $G$ with respect to $\mathcal{P}$. We say that the graph $G$ with vertex partition $\mathcal{P}$ is the \textit{list cover graph} arising from graph $H$ and list assignment $L$. 

The following characterization of list cover graphs is implicit in Bohman and Holzman \cite{BoHo}. We give a short proof for completeness.

\begin{lem}[\cite{BoHo}] \label{list-cover-graph}
A graph $G$ with partition $\mathcal{P}$ of $V(G)$ is a list cover graph if and only if
\begin{itemize}
	\item[(a)] for every connected component $C$ of $G$, every block of $\mathcal{P}$ contains at most one vertex of $C$ (that is, $G$ has multiplicity $1$); and
	\item[(b)] for every two connected components $C, C'$ of $G$, two blocks $V_i, V_j$ of $\mathcal{P}$, and vertices $v \in C \cap V_i$, $w \in C \cap V_j$, $v' \in C' \cap V_i$, $w' \in C' \cap V_j$, $v$ and $w$ are adjacent in $C$ if and only if $v'$ and $w'$ are adjacent in $C'$.
\end{itemize}
\end{lem}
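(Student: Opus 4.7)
The plan is to verify each direction by unpacking the definitions, since the lemma is essentially a clean characterization that matches how list colorings translate into transversal problems.

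For the forward direction, I would take $G$ arising from $(H, L)$ via the construction described just before the lemma. The key observation is that, by definition of the edge set of $G$, two vertices $(x, c)$ and $(y, c')$ are adjacent only when $c = c'$. Consequently every connected component of $G$ lies entirely inside a single ``color layer'' $\{(x, c) : c \in L(x), x \in V(H)\}$ for some fixed $c$. This immediately gives (a): a block $V_x = \{(x, c) : c \in L(x)\}$ contains at most one vertex with any given color, so at most one vertex per component. For (b), if $C$ and $C'$ have color $c$ and $c'$ respectively and contain $v = (x, c), w = (y, c) \in V_x, V_y$ and $v' = (x, c'), w' = (y, c') \in V_x, V_y$, then both adjacencies $v \sim w$ in $G$ and $v' \sim w'$ in $G$ are equivalent to the single statement $xy \in E(H)$.

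For the backward direction, I would reverse-engineer $(H, L)$ from $(G, \mathcal{P})$. Let $V(H) = \{x_i : V_i \in \mathcal{P}\}$, one vertex of $H$ per block, and choose a distinct color $c_C$ for each connected component $C$ of $G$. Define $L(x_i) = \{c_C : C \cap V_i \neq \emptyset\}$; by (a) this sets up, for each $i$, a bijection between $L(x_i)$ and $V_i$ sending $c_C$ to the unique element of $C \cap V_i$. Declare $x_i x_j \in E(H)$ iff there exists some component $C$ with vertices $v \in C \cap V_i$ and $w \in C \cap V_j$ that are adjacent in $G$; by (b), this definition does not depend on the choice of $C$ among components meeting both $V_i$ and $V_j$.

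Finally I would verify that the list cover graph built from this $(H, L)$ is canonically isomorphic (as vertex-partitioned graph) to $(G, \mathcal{P})$ via the bijection $(x_i, c_C) \mapsto$ the unique vertex in $C \cap V_i$. On the block-partition side, the correspondence is immediate since $V_{x_i}$ in the list cover graph and $V_i$ in $\mathcal{P}$ match setwise. For the edge set, $(x_i, c_C) \sim (x_j, c_{C'})$ in the list cover graph iff $c_C = c_{C'}$ (so $C = C'$) and $x_i x_j \in E(H)$; by definition of $E(H)$ together with (b), this is exactly the condition that the corresponding vertices in $V_i$ and $V_j$ are adjacent in $G$ and lie in the same component of $G$, which in turn is equivalent to their being adjacent in $G$ (since edges of $G$ only go between vertices in the same component). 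The only real bookkeeping obstacle is keeping condition (b) explicit when checking that $E(H)$ is well-defined and that edges of $G$ translate correctly, but this follows immediately once the bijection is in place.
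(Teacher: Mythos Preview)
Your proof is correct and follows essentially the same approach as the paper: both directions use the same construction of $H$ (vertices indexed by blocks, edges determined by adjacencies in $G$) and $L$ (colors indexed by components). You supply more detail in verifying the isomorphism in the backward direction, which the paper leaves to the reader.
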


\begin{proof}
Suppose that $G$ with vertex partition $\mathcal{P}$ is a list-cover graph arising from graph $H$ and list assignment $L$. Then every connected component $C$ of $G$ has vertex set $\{(x, c) : x \in U\}$ for some color $c$ and vertex subset $U \subseteq V(H)$. Then for $V_x = \{(x, c) : c \in L(x)\} \in \mathcal{P}$, we have $V_x \cap C = \{(x,c)\}$ if $x \in L(v)$, and $V_x \cap C = \emptyset$ otherwise, so that condition (a) holds. Moreover, for every two connected components $C, C'$ of $G$, two blocks $V_x, V_y$ of $\mathcal{P}$, and vertices $(x,c) \in C \cap V_x, (y,c) \in C \cap V_y, (x,c') \in C' \cap V_x, (y,c') \in C' \cap V_y$, we have that $(x,c)$ and $(y,c)$ are adjacent in $C$ if and only if $xy \in E(H)$, which is the case if and only if $(x,c')$ and $(y,c')$ are adjacent in $C'$. Thus condition (b) also holds.

Suppose conversely that (a) and (b) hold. Let $H$ be the graph with vertex set $\mathcal{P}$, where $V_i, V_j \in \mathcal{P}$ form an edge if and only if there exist vertices $v \in V_i, w \in V_j$
adjacent in $G$. Associate every connected component $C$ of $G$ with a distinct color $c$, and let $L$ be the list assignment for $V(H)$ where $L(V_i)$ is the set of colors $c$ associated with components $C$ where $C \cap V_i \neq \emptyset$. Then one can easily verify that $G, \mathcal{P}$ is the list-cover graph arising from $H, L$.
\end{proof}

Given a vertex $x \in V(H)$ and color $c \in L(x)$, the pair $(x, c)$ has \textit{color degree} equal to $|\{(y, c) : xy \in E(H), c \in L(y)\}|$. The \textit{maximum color degree} of $H, L$ is the maximum of the color degrees over all vertex-color pairs $(x, c)$. In other words, it is the maximum degree $d$ of the list cover graph $G$. Thus by Theorem \ref{max-degree-IT}, if $H$ has maximum color degree $d$ and $|L(x)| \ge 2d$ for every vertex $x \in V(H)$, then $H$ has a proper $L$-coloring (a result noted in \cite{Ha2}). Actually, because every list cover graph has local degree one, Loh and Sudakov's asymptotic theorem \cite{LoSu} implies that $H$ has a proper $L$-coloring whenever $|L(x)| \ge d + o(d)$ for every $x \in V(H)$ (a result proven before in \cite{ReSu}). Recall that Reed \cite{Re} conjectured that a proper $L$-coloring exists whenever $|L(x)| \ge d + 1$ for every $x \in V(H)$. This lower bound of $d+1$ is necessary, since for example there is no proper $L$-coloring if $H$ is the complete graph $K_{d+1}$ and $L$ is the uniform list assignment $L(x) = \{1, \ldots, d\}$ for every $x \in V(H)$. Bohman and Holzman \cite{BoHo} surprisingly found an example of a graph $H$ and list assignment $L$ for $V(H)$ such that $H, L$ has maximum color degree $d$, $|L(x)| = d+1$ for every $x \in V(H)$, and $H$ has no proper $L$-coloring. This disproved Reed's conjecture. More directly, they proved the following.

\begin{thm} \label{Bohman-Holzman}
For every $d \ge 2$, there exists a graph $G$ and a partition $\mathcal{P} = \{V_1, \ldots, V_r\}$ of $V(G)$ such that $G$ has maximum degree $d$, $|V_i| \ge d+1$ for every $i$, conditions (a) and (b) of Lemma \ref{list-cover-graph} hold, and there is no IT.
\end{thm}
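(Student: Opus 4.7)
The plan is to construct $G$ and $\mathcal{P}$ iteratively via Lemma~\ref{join-lemma}, starting from a seed satisfying conditions (a), (b), maximum degree at most $d$, and having no IT, and then repeatedly growing the blocks. A natural seed is $K_{d+1}$ with the partition into $d+1$ singleton blocks: this has maximum degree $d$, no IT, consists of a single component (so (a) and (b) hold trivially), but every block has size $1$.

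At each iteration I would apply Lemma~\ref{join-lemma} with a carefully chosen vertex-partitioned graph $(H_k,\mathcal{Q}_k)$ with no IT, distributing one block $W_s^{(k)}$ of $\mathcal{Q}_k$ into the existing blocks. The distribution must be chosen so that three things are preserved: (i) multiplicity one (condition (a))---this is achieved by sending the vertices of $W_s^{(k)}$ into distinct existing blocks and requiring $H_k$ itself to have multiplicity one with respect to $\mathcal{Q}_k$; (ii) the consistency condition (b)---which needs the adjacency between any two blocks to agree across all components that touch both; and (iii) the maximum degree bound $d$, which is automatic provided each $H_k$ has maximum degree at most $d$. Growing the blocks to size at least $d+1$ is the remaining goal, and the absence of an IT is guaranteed at every step by Lemma~\ref{join-lemma}.

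The cleanest way to preserve (b) throughout is to use components that are all complete graphs $K_{d+1}$, since any two $K_{d+1}$ components automatically agree (every shared block pair is adjacent). However, a short Hall-type argument shows that if every component is a $K_{d+1}$, every block has multiplicity one, and every block has size at least $d+1$, then a system of distinct representatives of the block-to-component incidence $A_i=\{C: C\cap V_i\neq\emptyset\}$ must exist (any purported Hall violator $S$ would satisfy $\sum_{i\in S}|A_i|\ge s(d+1)$ while also $\sum_{i\in S}|A_i|\le(s-1)(d+1)$), and this SDR yields an IT. Hence the construction cannot use only $K_{d+1}$-components: we must introduce components of a second type (for instance $K_{d,d}$ or $K_{d+1}$ minus an edge) that produce non-adjacent block pairs and so break Hall's condition.

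Therefore the construction naturally divides into two phases. In the first phase I would use iterated joins with $K_{d+1}$-components to enlarge a base collection of blocks toward size $d+1$; in the second phase I would add components of the second type in a separate "sector" of blocks, chosen so that its block pairs are disjoint from the adjacent block pairs already fixed by the $K_{d+1}$-components, so that (b) is not violated, and so that the block-component incidence picks up a Hall violator precluding any IT. Organizing the iterations into four sectors of $(d+1)^2$ blocks each leads to the total count of $4(d+1)^2$ blocks announced in the introduction. I expect the main obstacle to be precisely this interaction between (b) and the Hall-violation requirement: the non-$K_{d+1}$-components must create enough non-adjacent block pairs to kill all SDRs, while the sector structure and the distribution rules in Lemma~\ref{join-lemma} must be chosen to keep these non-adjacencies from ever conflicting with an adjacency fixed by a $K_{d+1}$-component on the same block pair.
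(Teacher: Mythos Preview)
Your proposal correctly identifies the constraints and correctly argues that a construction using only $K_{d+1}$-components cannot work. But what follows is a plan, not a construction: ``in the second phase I would add components of the second type in a separate sector of blocks, chosen so that\ldots'' does not exhibit such a choice, and the difficulty you yourself flag is real. Concretely, any $(H_k,\mathcal{Q}_k)$ you feed into Lemma~\ref{join-lemma} must itself have multiplicity one and no IT; a single non-clique component with a multiplicity-one partition always has an IT, so $H_k$ must already be a multi-component, multiplicity-one graph with no IT---essentially the locally sparse object from Theorem~\ref{locally-sparse-graph} with $m=1$, which you never build. And even granting such an $H_k$, distributing one of its blocks into your existing $K_{d+1}$-sectors creates shared blocks between the new non-clique components and the old cliques, exactly where condition (b) bites; you give no mechanism to prevent a non-adjacent pair in $H_k$ from landing on a block pair already forced adjacent by a $K_{d+1}$.

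The paper's approach reverses your two phases, and this reversal is the missing idea. It \emph{starts} from a multiplicity-one graph $H,\mathcal{Q}$ with no IT and blocks of size exactly $d+1$ (the locally sparse construction for $m=1$, whose components are $K_{d,d}$'s and $K_{d+1}$'s). This already satisfies (a) but not (b). Then, for each block $U\in\mathcal{Q}$, it adjoins $d$ disjoint copies of $K_{d+1}$ with the standard $(d{+}1)$-partition and distributes the $d{+}1$ vertices of $U$ \emph{bijectively} into the $d{+}1$ new blocks. The point of the bijection is that any two vertices of $U$ (and hence any two components of $H$ meeting $U$) are sent to \emph{different} final blocks; thus distinct components of $H$ share no final block at all, and condition (b) between them is vacuous. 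Between a component of $H$ and a $K_{d+1}$ there is at most one shared block (again by the bijection plus multiplicity one of $H$), so (b) is vacuous there too; and between two $K_{d+1}$'s it holds because both are cliques. Your forward order---cliques first, then non-cliques---forfeits exactly this separation property and leaves (b) uncontrolled.
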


The construction of Bohman and Holzman \cite{BoHo} for Theorem \ref{Bohman-Holzman} involved $2(d+1)(d^2+1)$ blocks. We describe a perhaps more intuitive proof of Theorem \ref{Bohman-Holzman} with only
$4(d+1)^2$ blocks.  

\begin{proof}

Fix $d \ge 2$. We start with any construction for Theorem \ref{locally-sparse-graph} for the case $m = 1$, so that each block has size at least $d+1$. For convenience, we delete some vertices to make all the blocks have size exactly $d+1$. Call the constructed graph $H$ and partition $\mathcal{Q}$. We have that $|U| = d+1$ for every $U \in \mathcal{Q}$, and we satisfy condition (a) of Lemma \ref{list-cover-graph} with $H$ and $\mathcal{Q}$, but not condition (b). To fix this, we again apply Lemma \ref{join-lemma}, adding additional collections of complete graphs. Let $G_0$ be the disjoint union of $d$ copies of the complete graph $K_{d+1}$, and let $\mathcal{P}_0$ be a standard $(d+1)$-partition of $G_0$. For every block $U \in \mathcal{Q}$ in succession, we add a copy of $G_0$ while distributing the $d+1$ vertices of $U$ into the $d+1$ blocks of $\mathcal{P}_0$, with each block of $\mathcal{P}_0$ receiving exactly one vertex. Doing this for each $U \in \mathcal{Q}$, we derive a graph $G$ and vertex partition $\mathcal{P}$ with no IT, and every block of $\mathcal{P}$ has size $d+1$. 

Observe that $G$ and $\mathcal{P}$ still satisfy condition (a) of Lemma \ref{list-cover-graph}. We claim that $G$ and $\mathcal{P}$ also satisfy condition (b). To see this, let $C$ and $C'$ be distinct components of $G$. If $C$ and $C'$ both lie in an added copy of $G_0$, then $C$ and $C'$ are cliques whose intersecting blocks are either the same or disjoint, so condition (b) holds for $C$ and $C'$. Next, suppose that the component $C$ lies in $H$ and component $C'$ lies in some added copy of $G_0$. Let $U$ be the block of $\mathcal{Q}$ whose vertices were distributed into the blocks intersecting $C'$ when we added a copy of $G_0$. Since $H$ has multiplicity $1$, there is at most one vertex in $U \cap C$, and thus there is at most one block of $\mathcal{P}$ intersecting both $C$ and $C'$. Hence condition (b) holds in this case as well. Finally, suppose that $C$ and $C'$ both lie in $H$. In this case, there are no blocks of $\mathcal{P}$ that intersect both $C$ and $C'$: If there were such a block, then it would have been constructed by taking a block $U$ of $\mathcal{Q}$ intersecting both $C$ and $C'$, and distributing its vertices into the blocks of $\mathcal{P}_0$ in such a way that the vertex in $U \cap C$ and the vertex in $U \cap C'$ end up in the same block of $\mathcal{P}$. But this is impossible because every block of $\mathcal{P}_0$ receives a unique vertex of $U$ by construction. Therefore, condition (b) holds for all $C$ and $C'$.
\end{proof}

The construction we described is similar in structure to that of Bohman and Holzman \cite{BoHo}. If we use the Szab\'o-Tardos construction as the starting point in the proof of Theorem \ref{locally-sparse-graph}, then the output of our proof of Theorem \ref{locally-sparse-graph} has $2d(d+1)$ blocks, and the output of our proof of Theorem \ref{Bohman-Holzman} has $2d(d+1)^2$ blocks. The $4(d+1)^2$ blocks claimed above can be achieved by following the proof of Theorem~\ref{locally-sparse-graph}, but using the graph $K_{2,2} \sqcup K_{d,d} \sqcup K_{d,d}$ with $4$ blocks of size $d+1$ as the starting point instead of the Szab\'o-Tardos construction. Our efforts were unsuccessful in achieving constructions where all the blocks have size at least $d+2$, a problem posed in \cite{BoHo, ReSu}.

\begin{prob}
Does there exist a universal constant $k \ge 1$ such that whenever $H, L$ has maximum color degree $d$ and $|L(x)| \ge d+k$ for every $x \in V(H)$, there exists a proper $L$-coloring of $H$? In particular, can we take $k = 2$?
\end{prob}

We comment on what occurs when we relax condition (a) or (b) in Lemma \ref{list-cover-graph}. In the case where we are only required to satisfy condition (a), this is the setting for $m = 1$ in Theorem \ref{locally-sparse-graph}. Even in this relaxed setting, we are unable to surpass the known lower bound $d+2$ (by Bohman and Holzman \cite{BoHo}) on the block sizes required to guarantee an IT in a graph with multiplicity one (a problem that was posed in \cite{AhHoHoSp}). On the other hand, if we are only required to satisfy condition (b), then the situation is quite different. For example, in \cite{CaHaKaWd} it is shown that there exist graphs $G$ and partitions $\mathcal{P}$ of $V(G)$ such that condition (b) holds, $|V_i| = 2d - 1$ for every $V_i \in \mathcal{P}$, and there is no IT. Thus, much of the difference between list cover graphs and general graphs lies in condition (a). However, if we require that condition (b) hold and we relax the local degree or multiplicity to being a fixed number $m \ge 1$, then the best we can currently achieve in general are block sizes $|V_i| \ge d + m$ whenever $d$ is a multiple of $m$ and $d > m$, obtained for example by ``blowing up" any construction for Theorem \ref{Bohman-Holzman}, replacing every vertex by $m$ independent vertices and replacing every edge by the complete bipartite graph $K_{m,m}$.

Finally, we remark that there are variations of list coloring for which independent transversals are even more relevant. Ordinary list colorings correspond to IT's in quite specific types of cover graphs (as described in Lemma \ref{list-cover-graph}), but a variation of list coloring known as \textit{conflict list coloring} corresponds to IT's in general graphs. For the definition and a broad overview of conflict (list) coloring, see \cite{DvEsKaOz1, DvEsKaOz2, FrHeKo}. One can phrase many famous variations of list coloring into the language of conflict list colorings and thus into IT's of certain classes of vertex-partitioned graphs. These variations include \textit{correspondence colorings} (also called \textit{DP-colorings}) \cite{DvPo}, \textit{adaptable list colorings} \cite{KoZh}, and \textit{cooperative list colorings} \cite{AhHoHoSp}. Each of these list coloring variations give rise to locally sparse cover graphs, and Theorem \ref{locally-sparse-graph} is relevant for each of them.

\section{Counterexamples} \label{section-questions}

In this section, we give counterexamples to two statements on IT's in graphs. One is a conjecture of Aharoni, Alon, and Berger \cite{AhAlBe} on IT's in $K_{1,k}$-free graphs. The other is a question of Aharoni, Holzman, Howard, and Spr\"ussel \cite{AhAlBe} on the structure of extremal vertex-partitioned graphs with no IT.

\subsection{A conjecture of Aharoni, Alon, and Berger}
Aharoni, Alon, and Berger \cite{AhAlBe} conjectured that for every $K_{1,k}$-free graph $G$ with maximum degree $d$ and partition $\mathcal{P} = \{V_1, \ldots, V_r\}$ of $V(G)$, if $|V_i| \ge d + k - 1$ for every $i$ then there exists an IT. We give a construction disproving their conjecture for $k \ge 3$. We start with the Szab\'o-Tardos construction $G_0, \mathcal{P}$ for Theorem \ref{Szabo-Tardos}, consisting of $2m - 1$ copies of $K_{m,m}$ where $m$ is a multiple of $k - 1$. To both parts of each complete bipartite component of $G_0$, we add $k - 1$ pairwise disjoint clique edge sets of size $\frac{m}{k - 1}$ on the vertex sets. The resulting graph $G$ is $K_{1,k}$-free with maximum degree $d = m + \frac{m}{k-1} - 1 = \frac{km}{k - 1} - 1$, each block of $\mathcal{P}$ has size $|V_i| = 2m - 1 = \frac{d(2k - 2) + k - 2}{k}$, and $G$ has no IT with respect to $\mathcal{P}$. For $k \ge 3$ and $d > k + \frac{2}{k - 2}$, these block sizes $|V_i|$ are greater than $d + k - 1$, which disproves the conjecture. 

On the other hand, Aharoni, Alon, and Berger \cite{AhAlBe} did prove that for a $K_{1,k}$-free graph with maximum degree $d$, blocks of size $|V_i| \ge \frac{d(2k - 3) + k - 1}{k - 1}$ for every $i$ guarantee the existence of an IT. We believe that the lower bound from our construction above is closer to being the optimal bound for the existence of IT's. We remark that the original conjecture of Aharoni, Alon, and Berger (Conjecture 2.8 in \cite{AhAlBe}) was actually about the topological connectedness of the independence complexes of $K_{1,k}$-free graphs, but this original conjecture implies the above conjecture about IT's. We can disprove their connectedness conjecture more directly. Let $m$ be a multiple of $k - 1$, and let $G$ be the complete bipartite graph $K_{m,m}$ together with $k - 1$ cliques of size $\frac{m}{k - 1}$ added to both parts of $K_{m,m}$. (This example was described in \cite{AhAlBe} but for a different problem.) Then the independence complex of $G$ is disconnected, but the number of vertices of $G$ is $2m = \frac{d(2k - 2) + 2k - 2}{k}$, which for $k \ge 3$ and $d > k - 1$ is greater than the at most $d + k - 1$ vertices predicted by the connectedness conjecture.

\begin{prob}
What is the optimal bound $N = N(k, d)$ such that whenever $G$ is a $K_{1,k}$-free graph with maximum degree $d$, and $\mathcal{P} = \{V_1, \ldots, V_r\}$ is a partition of $V(G)$ with $|V_i| \ge N$ for every $i$, there exists an IT?
\end{prob}

\subsection{A question of Aharoni, Holzman, Howard, and Spr\"ussel}
Aharoni, Holzman, Howard, and Spr\"ussel \cite{AhHoHoSp} asked whether, for every graph $G$ with maximum degree $d$ and partition $\mathcal{P} = \{V_1, \ldots, V_r\}$ of $V(G)$ such that $|V_i| \ge 2d - 1$ for every $i$ and there is no IT, there exists a $K_{d,d}$ component contained in the union of two blocks $V_i$, $V_j$. In this section, we give a construction that demonstrates a negative answer to this question. On the other hand, Theorem \ref{union-of-two-classes} in Section \ref{section-2blocks} implies that their question does hold when the graph $G$ is exactly the disjoint union of $2d - 1$ copies of $K_{d,d}$, as in Theorem \ref{Szabo-Tardos} (Corollary \ref{union-of-two-classes-Kdd}). In addition, Theorem \ref{union-of-two-classes} will also let us show that all possible constructions $G, \mathcal{P}$ for Theorem \ref{Szabo-Tardos} can be derived using the construction method described in this paper (Corollary \ref{characterize-Kdd}).

Using Lemma \ref{join-lemma}, it is straightforward to produce counterexamples to the question of Aharoni, Holzman, Howard, and Spr\"ussel \cite{AhHoHoSp}. For example, again start with the Szab\'o-Tardos \cite{SzTa} construction $G_0, \mathcal{P}_0$ for Theorem \ref{Szabo-Tardos}, which has $2d - 1$ copies of $K_{d,d}$. Fix any block $U \in \mathcal{P}_0$, and note that there is exactly one component $K$ of $G_0$ that intersects $U$ at $d$ vertices. We add a copy of the complete bipartite graph $K_{d,d-1}$ with the standard bipartition $\{A, B\}$, where $|A| = d$ and $|B| = d - 1$. We distribute $d - 1$ vertices of $U \cap K$ into $A$, and distribute the remaining $d$ vertices of $U$ into $B$. We do this for each of the $2d$ blocks of $\mathcal{P}_0$ successively. Let $G, \mathcal{P}$ be the final vertex-partitioned graph. Then $G$ has no IT with respect to $\mathcal{P}$ by Lemma~\ref{join-lemma}, every block $V_i \in \mathcal{P}$ has size $|V_i| = 2d - 1$, and there is no $K_{d,d}$ component of $G$ contained in the union of two blocks: every $K_{d,d}$ component in $G$ lies in $G_0$, and each of them is now separated into at least four blocks.

In the described counterexample, although there is no $K_{d,d}$ component contained in the union of two blocks, there are many $K_{d,d-1}$ components contained in the union of two blocks. Indeed, it is a feature of our construction method that whenever we apply Lemma \ref{join-lemma} by adding a complete bipartite component with the standard bipartition, this added complete bipartite component will still be contained in the union of two blocks. We could artificially break this property in our example by adding a new block $U$ of size $2d$, and attaching each vertex of $U$ to a unique $K_{d,d-1}$ component to make it a $K_{d,d}$ component. In this new example, every block has size at least $2d - 1$, there is no IT, and no component of the graph is contained in the union of two blocks. Of course, by adding the block $U$ we violate the block-minimality condition (a) required to apply Theorem \ref{union-of-two-classes}. See Section \ref{section-2blocks}. The following problem is also still open.

\begin{prob}
Let $G$ be a graph with maximum degree $d$ sufficiently large, and let $\mathcal{P} = \{V_1, \ldots, V_r\}$ be a partition of $V(G)$. If $|V_i| \ge 2d - 1$ for every $i$ and there is no IT, does $G$ necessarily contain a large complete bipartite subgraph contained in the union of two blocks (large meaning the parts have size $\Omega(d)$)?
\end{prob}

\section{Further constructions} \label{section-further}

In this section, we describe various further applications of our construction method. We begin with a continuation of the topic of the previous section.

\subsection{Components contained in the union of two blocks}\label{section-2blocks}

Let $G$ be a graph and let $\mathcal{P} = \{V_1, \ldots, V_r\}$ be a partition of $V(G)$. We say that $G$ is \textit{block-minimal with no IT} if $G$ has no IT with respect to $\mathcal{P}$ but $G - V_i$ has an IT with respect to $\mathcal{P} \setminus \{V_i\}$, for every $i \in [r]$. The following variation of the question of Aharoni, Holzman, Howard, and Spr\"ussel \cite{AhHoHoSp} does hold.

\begin{thm} \label{union-of-two-classes}
Let $G$ be a graph and let $\mathcal{P} = \{V_1, \ldots, V_r\}$ be a partition of $V(G)$, where $r \ge 2$. Suppose that
\begin{itemize}
	\item[(a)] $G$ is block-minimal with no IT,
	\item[(b)] $G$ is the disjoint union of complete bipartite graphs, and
	\item[(c)] the number of components of $G$ is $r - 1$.
\end{itemize}
Then there exist a component $K_{A,B}$ of $G$ and blocks $V_i, V_j$ in $\mathcal{P}$ such that $A \subseteq V_i$ and $B \subseteq V_j$.
\end{thm}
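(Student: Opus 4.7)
My plan is to recast the problem as one about minimally unsatisfiable (MU) CNF formulas and invoke the structure theorem for MU formulas of deficiency one.

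For each component $K_{A_\ell,B_\ell}$ I introduce a Boolean variable $x_\ell$ whose two truth values encode the two sides $A_\ell, B_\ell$, and for each block $V_j$ I form the clause $c_j$ containing the literal $x_\ell$ for every $\ell$ with $A_\ell \cap V_j \ne \emptyset$ and the literal $\neg x_\ell$ for every $\ell$ with $B_\ell \cap V_j \ne \emptyset$. A truth assignment then prescribes a side-choice per component, and $c_j$ is satisfied precisely when the chosen side of some component meets $V_j$. Thus $G$ has an IT iff $\phi := \bigwedge_j c_j$ is satisfiable, and $G - V_i$ has an IT iff $\phi$ with $c_i$ removed is satisfiable. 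Under this correspondence, hypotheses (a)--(c) translate exactly to: $\phi$ is MU with $r$ clauses on $r-1$ variables, hence of deficiency one. A short preliminary argument also shows $I_\ell \cap J_\ell = \emptyset$ for every $\ell$ (where $I_\ell, J_\ell$ are the sets of blocks meeting $A_\ell, B_\ell$), because otherwise some $V_j$ would be hit by every side-choice of $\ell$, contradicting the existence of an IT of $G - V_j$ that misses $V_j$.

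It then suffices to prove that every MU CNF of deficiency one contains a variable with exactly one positive and one negative occurrence. Translating back, such a variable yields a component $\ell$ with $|I_\ell| = |J_\ell| = 1$; by the disjointness $I_\ell \cap J_\ell = \emptyset$ the two singleton indices are distinct, producing the required $V_i, V_j$ with $A_\ell \subseteq V_i$ and $B_\ell \subseteq V_j$.

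I expect this structural statement about deficiency-one MU formulas to be the main obstacle and the core of Section~\ref{big-proof}. The natural route is induction on $r$: a Hall-type matching argument (the Aharoni--Linial theorem for MU formulas) injects the $r-1$ variables into distinct clauses, leaving one unmatched clause $c^*$; fixing a satisfying assignment $\alpha$ of $\phi - c^*$, which necessarily falsifies every literal of $c^*$, and tracing the ``bad'' clauses produced by flipping single variables of $c^*$, one either locates a variable with the desired $(1,1)$-pattern or performs an ``un-expansion'' that reduces to a smaller deficiency-one MU formula, to which induction applies. The delicate points are terminating the flip-propagation and, for a self-contained graph-theoretic proof, ensuring that each reduction step corresponds to a reduction of $(G, \mathcal{P})$ that preserves (a)--(c).
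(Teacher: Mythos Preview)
Your translation to CNF is sound and the approach works, but it is genuinely different from what the paper does. The paper never leaves graph theory: it constructs an \emph{induced matching configuration} via the feasible-pair algorithm of Haxell--Szab\'o, and through a chain of claims proves the stronger Lemma~\ref{complete-bipartite-lemma} that for \emph{every} block $V_i$ some component has one side contained in $V_i$; the theorem then follows by pigeonhole over the $r$ blocks and $r-1$ components. Your route instead invokes the Davydov--Davydova--Kleine B\"uning characterisation of minimally unsatisfiable formulas of deficiency one, which guarantees a variable with exactly one positive and one negative occurrence. If you are willing to quote the MU(1) result as known, your argument is considerably shorter and more conceptual; moreover the companion MU(1) fact that every clause contains a literal occurring in no other clause would recover Lemma~\ref{complete-bipartite-lemma} in full. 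The paper's IMC argument, by contrast, is entirely self-contained and stays within the combinatorial framework already developed in the IT literature.

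Two small points. First, your justification of $I_\ell \cap J_\ell = \emptyset$ is phrased backwards: the contradiction is not with the existence of an IT of $G - V_j$, but that any such IT can be \emph{extended} to an IT of $G$ by adding a vertex of $V_j$ on whichever side of component $\ell$ is compatible, contradicting~(a). Second, your inductive sketch for the MU(1) structural fact (Hall matching to isolate $c^*$, flip-propagation, un-expansion) is the right shape but is where all the real work lies; if you intend a self-contained proof you will need to spell this out carefully, and you should be aware that Section~\ref{big-proof} does \emph{not} proceed this way --- there is no CNF reformulation and no appeal to MU theory in the paper.
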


The proof of Theorem \ref{union-of-two-classes} is slightly technical and deferred to Section \ref{big-proof}. Here, we observe that Theorem \ref{union-of-two-classes} implies a positive answer for the question of Aharoni, Holzman, Howard, and Spr\"ussel \cite{AhHoHoSp} when the graph $G$ is exactly the disjoint union of $2d - 1$ copies of $K_{d,d}$, as in Theorem \ref{Szabo-Tardos}.

\begin{cor} \label{union-of-two-classes-Kdd}
For every construction $G, \mathcal{P}$ for Theorem \ref{Szabo-Tardos}, there exists a component $K_{d,d}$ of $G$ that is contained in the union of two blocks of $\mathcal{P}$. 
\end{cor}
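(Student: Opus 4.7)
The plan is to derive this as a direct corollary of Theorem \ref{union-of-two-classes}. Given any construction $G, \mathcal{P}$ satisfying Theorem \ref{Szabo-Tardos}, the graph $G$ is by hypothesis the disjoint union of $2d - 1$ copies of $K_{d,d}$, so condition (b) of Theorem \ref{union-of-two-classes} is immediate. Similarly, the number of components of $G$ is $2d - 1$, and the number of blocks is $r = 2d$, so condition (c) also holds. The substantive work is to verify the block-minimality condition (a): for every $i$, the graph $G - V_i$ must have an IT with respect to $\mathcal{P} \setminus \{V_i\}$.

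The plan for verifying (a) is a contrapositive argument using Theorem \ref{many-copies-Kdd}. Fix a block $V_i$ and let $d'$ denote the maximum degree of $G - V_i$. In the easy subcase $d' \le d - 1$, the remaining blocks have size $|V_j| = 2d - 1 \ge 2d'$, so Theorem \ref{max-degree-IT} already yields an IT. In the main subcase $d' = d$, suppose for contradiction that $G - V_i$ has no IT. Then by Theorem \ref{many-copies-Kdd} (valid for $d > 2$), $G - V_i$ contains $2d - 1$ pairwise disjoint copies of $K_{d,d}$ as a subgraph. However, every vertex of degree $d$ in $G - V_i$ must lie in a $K_{d,d}$ component of $G$ that is left untouched by the removal of $V_i$, so each of these $2d - 1$ disjoint $K_{d,d}$ subgraphs must be contained in an intact component of $G$. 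Since $V_i$ is nonempty and each of its vertices destroys the $K_{d,d}$ component of $G$ containing it, at most $2d - 2$ components of $G$ remain intact, contradicting the existence of $2d - 1$ disjoint $K_{d,d}$ subgraphs. The edge cases are routine: when $d = 1$, the graph $G - V_i$ is a single vertex and trivially has an IT; when $d = 2$, I would apply Theorem \ref{cycles-length-1-mod-3} in place of Theorem \ref{many-copies-Kdd} and observe that three disjoint cycles of length $\ge 4$ would require at least $12$ vertices, whereas $G - V_i$ has only $(2d-1)(2d) - (2d-1) = 9$ vertices.

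With (a), (b), (c) all verified, Theorem \ref{union-of-two-classes} immediately produces a component $K_{A,B}$ of $G$ with $A \subseteq V_i$ and $B \subseteq V_j$ for some blocks $V_i, V_j \in \mathcal{P}$. Since every component of $G$ is a $K_{d,d}$, this is exactly the required $K_{d,d}$ component contained in the union of two blocks. The main obstacle is verifying the block-minimality condition (a), since it is the only part that is not immediate from the hypotheses; however, once the extremal Theorems \ref{many-copies-Kdd} and \ref{cycles-length-1-mod-3} are invoked as black boxes, the rest reduces to a simple counting argument on the number of intact $K_{d,d}$ components after deleting $V_i$.
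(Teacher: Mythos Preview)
Your proposal is correct and follows the same route as the paper: verify conditions (a), (b), (c) of Theorem~\ref{union-of-two-classes} and apply it. The paper dispatches condition (a) in one line by citing Theorem~\ref{many-copies-Kdd}; your argument unpacks this citation with explicit case analysis on the maximum degree $d'$ of $G - V_i$, and additionally handles the edge cases $d=1,2$ (which the paper's one-line proof technically does not cover, since Theorem~\ref{many-copies-Kdd} assumes $d>2$). Your component-counting argument for the main case $d'=d$ is valid; a slightly quicker route is pure vertex counting: $2d-1$ disjoint copies of $K_{d,d}$ need $(2d-1)(2d)$ vertices, whereas $G-V_i$ has only $(2d-1)^2$.
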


\begin{proof}
Conditions (b) and (c) of Theorem \ref{union-of-two-classes} are satisfied by definition with $r = 2d$. Condition (a) is satisfied by Theorem \ref{many-copies-Kdd} when $d \neq 2$, and is satisfied by Theorem \ref{cycles-length-1-mod-3} when $d = 2$.
\end{proof}

\subsection{Characterizing disjoint unions of complete bipartite graphs with no IT} \label{section-characterize-complete-bipartite}

In this subsection, we describe a further consequence of Theorem \ref{union-of-two-classes} of interest, namely Theorem \ref{characterize-complete-bipartite}, which tells us that all possible vertex-partitioned graphs $G, \mathcal{P}$ satisfying conditions (a), (b), (c) of Theorem \ref{union-of-two-classes} can be derived by iteratively applying Lemma \ref{join-lemma}. In particular, all possible constructions $G, \mathcal{P}$ for Theorem \ref{Szabo-Tardos} can be derived by iteratively applying Lemma \ref{join-lemma} (Corollary \ref{characterize-Kdd}).

Graphs that are block-minimal with no IT were used in \cite{BeHaSz,Ha3} to give combinatorial proofs of Theorem \ref{max-degree-IT} and to obtain information about dominating sets in such graphs. In the lemma below, we observe that Lemma \ref{join-lemma} preserves the minimality properties of the vertex-partitioned graphs $G$ and $H$ involved.

\begin{lem} \label{join-lemma-minimal}
Let graphs $G, H, G \cup H$ and vertex partitions $\mathcal{P}, \mathcal{Q}, \mathcal{R}$ be as in Lemma \ref{join-lemma} (not assuming they have no IT's). Then with respect to the appropriate partitions, 
\begin{itemize}
	\item[(i)] if $G$ and $H$ are both block-minimal with no IT, then so is $G \cup H$;
	\item[(ii)] if $G \cup H$ and $H$ are both block-minimal with no IT, then so is $G$; and
	\item[(iii)] if $G \cup H$ and $G$ are both block-minimal with no IT, then so is $H$.
\end{itemize}
\end{lem}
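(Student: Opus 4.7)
The plan hinges on a dichotomy implicit in the proof of Lemma~\ref{join-lemma}: any IT of $G \cup H$ with respect to (a sub-partition of) $\mathcal{R}$ decomposes into a $G$-part and an $H$-part across the disjoint union, and each of its vertices lying in some $V_j'$ is either in $V_j$ (a vertex of $G$) or in $W_s$ (a vertex of $H$). Call it \emph{Case A} if every ``$V_j'$-vertex'' of the IT is in $V_j$, and \emph{Case B} if at least one ``$V_j'$-vertex'' lies in $V_{j_0}' \cap W_s$ for some $j_0$. In Case A the $G$-part is an IT of $G$ relative to the $V_j$'s it hits and the $H$-part is an IT of $H - W_s$ relative to the $W_k$'s ($k\le s-1$) it hits; in Case B the $H$-part together with the $W_s$-vertex is an IT of $H$ relative to all the $W_k$'s ($k\le s-1$) it hits plus $W_s$. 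All three parts of the lemma reduce to short case analyses using this dichotomy.

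For part (i), given a block $U\in\mathcal{R}$ I must produce an IT of $(G\cup H)-U$. If $U=V_i'$, I splice an IT of $G-V_i$ (from block-minimality of $G$) with an IT of $H-W_s$ (from block-minimality of $H$). If $U=W_k$ with $k\in[s-1]$, I take an IT $T_H$ of $H-W_k$ (block-minimality of $H$), locate its vertex $w_s\in W_s$, note $w_s$ was distributed into some $V_{j_0}'$, and combine $T_H$ with an IT of $G-V_{j_0}$ (block-minimality of $G$).

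For part (ii), first $G$ has no IT: otherwise, combining an IT of $G$ with an IT of $H-W_s$ (which exists by block-minimality of $H$) gives an IT of $G\cup H$, contradicting its no-IT. For block-minimality, block-minimality of $G\cup H$ yields an IT of $(G\cup H)-V_i'$; Case B would produce an IT of $H$ and contradict block-minimality of $H$, so Case A holds and its $G$-part is an IT of $G-V_i$. Part (iii) is dual: if $H$ had an IT, I locate its $W_s$-vertex in some $V_{j_0}'$ and splice with an IT of $G-V_{j_0}$ (from block-minimality of $G$) to contradict no-IT of $G\cup H$. For block-minimality at $W_k$ with $k\in[s-1]$, block-minimality of $G\cup H$ yields an IT of $(G\cup H)-W_k$; Case A would yield an IT of $G$ (impossible), so Case B supplies an IT of $H-W_k$. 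For $W_s$, apply block-minimality of $G\cup H$ at any $V_i'$; Case B would yield an IT of $H$ (impossible, since we have just proved $H$ has none), so Case A holds and its $H$-part is the desired IT of $H-W_s$.

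No deep obstacle arises. The only thing to be careful about is bookkeeping: in each appeal to block-minimality of one of $G$, $H$, or $G\cup H$, I must remove the ``right'' block so that the extracted IT either glues with a partner or triggers the desired contradiction via the dichotomy. Once the dichotomy is stated cleanly, each of the three parts is a few lines.
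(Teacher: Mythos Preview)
Your proof is correct and follows essentially the same approach as the paper's. The paper only writes out part (iii) explicitly (declaring (i) and (ii) similar), using exactly your splicing/dichotomy idea; your explicit Case A/Case B framing is a slightly more systematic packaging of the same argument, and for the $W_s$ case in (iii) the paper observes more directly that the $\{w_1,\ldots,w_{s-1}\}$ portion of any IT of $(G\cup H)-V_1'$ is automatically an IT of $H-W_s$, without needing to rule out Case B.
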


\begin{proof}
We only prove (iii), as the proofs of (i) and (ii) are similar. Recall that $\mathcal{P} = \{V_1, \ldots, V_r\}$, $\mathcal{Q} = \{W_1, \ldots, W_s\}$, and $\mathcal{R} = \{V_1', \ldots, V_r', W_1, \ldots, W_{s-1}\}$. Suppose for contradiction that $H$ has an IT $\{w_1, \ldots, w_s\}$ with respect to $\mathcal{Q}$, where $w_j \in W_j$ for every $1 \le j \le s$. Suppose that $w_s \in V_i'$, where $1 \le i \le r$. Since $G$ is block-minimal with no IT, there exists an IT $\{v_1, \ldots, v_{i-1}, v_{i+1}, \ldots, v_r\}$ of $G - V_i$ with respect to $\mathcal{P} \setminus \{V_i\}$. Then $\{v_1, \ldots, v_{i-1}, w_j, v_{i+1}, \ldots, v_r, w_1, \ldots, w_{s-1} \}$ is an IT of $G \cup H$ with respect to $\mathcal{R}$, a contradiction. To prove the block-minimality of $H$, first note that by the block-minimality of $G \cup H$, for every $1 \le j \le s-1$, $(G \cup H) - W_j$ has an IT $\{v_1, \ldots, v_r, w_1, \ldots, w_{j-1}, w_{j+1}, \ldots, w_{s-1}\}$ with respect to $\mathcal{R} \setminus \{W_j\}$. Since $G$ has no IT with respect to $\mathcal{P}$, we have that $v_i \in W_s$ for some $1 \le i \le r$. Then $\{w_1, \ldots, w_{j-1}, w_{j+1}, \ldots, w_{s-1}, v_i\}$ is an IT of $H - W_j$ with respect to $\mathcal{Q} \setminus \{W_j\}$, as required. To check $W_s$, note again by the block-minimality of $G \cup H$, $(G \cup H) - V_1'$ has an IT $\{v_2, \ldots, v_r, w_1, \ldots, w_{s - 1}\}$ with respect to $\mathcal{R} \setminus \{V_1'\}$. Then $\{w_1, \ldots, w_{s-1}\}$ is an IT of $H - W_s$ with respect to $\mathcal{Q} \setminus \{W_s\}$. Therefore, $H$ is block-minimal with no IT. 
\end{proof}

Since $K_{d,d}$ with its standard bipartition $\mathcal{P}$ is block-minimal with no IT, Lemma \ref{join-lemma-minimal} implies that all the constructions we presented for Theorem \ref{Szabo-Tardos} are block-minimal with no IT. (This fact also follows from Theorem \ref{many-copies-Kdd}.) All of the other constructions in the proofs in this paper are also block-minimal with no IT. Now we use Theorem \ref{union-of-two-classes} to prove the main result of this subsection.

\begin{thm} \label{characterize-complete-bipartite}
Let $G$ be a graph and let $\mathcal{P} = \{V_1, \ldots, V_r\}$ be a partition of $V(G)$, where $r \ge 2$. Suppose that
\begin{itemize}
	\item[(a)] $G$ is block-minimal with no IT,
	\item[(b)] $G$ is the disjoint union of complete bipartite graphs, and
	\item[(c)] the number of components of $G$ is $r - 1$.
\end{itemize}
Then $G, \mathcal{P}$ can be constructed via Lemma \ref{join-lemma}, by iteratively adding complete bipartite graphs with the standard bipartition.
\end{thm}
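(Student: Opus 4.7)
The plan is to proceed by induction on $r$. The base case is $r = 2$: by (c), $G$ is a single complete bipartite graph $K_{A,B}$, and the no-IT condition quickly forces the partition $\{V_1, V_2\}$ to coincide with the standard bipartition $\{A, B\}$, since otherwise some block contains vertices from both sides and one can pick two same-side vertices to form an IT. So the base structure is already a complete bipartite graph with standard bipartition, requiring zero applications of Lemma \ref{join-lemma}.

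For the inductive step $r \ge 3$, the plan is to peel off a single component $K_{A,B}$ and apply the inductive hypothesis to what remains. Apply Theorem \ref{union-of-two-classes} to produce a component $K_{A,B}$ with $A \subseteq V_a$ and $B \subseteq V_b$ for some blocks $V_a, V_b \in \mathcal{P}$. I first observe that we cannot have simultaneously $V_a = A$ and $V_b = B$: since $r \ge 3$, there exists a third block $V_k$, and block-minimality yields an IT of $G - V_k$ that would contain one vertex from $V_a = A$ and one from $V_b = B$, a pair that spans an edge of $K_{A,B}$, a contradiction. Consequently $U := (V_a \setminus A) \cup (V_b \setminus B)$ is non-empty. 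I then define $H^* := G - V(K_{A,B})$ and let $\mathcal{Q}^*$ be the partition of $V(H^*)$ consisting of $U$ together with the $r - 2$ blocks of $\mathcal{P}$ distinct from $V_a$ and $V_b$, so $\mathcal{Q}^*$ has $r - 1$ blocks while $H^*$ has $r - 2$ components, which gives (b) and (c).

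The next step is to verify that $(G, \mathcal{P})$ is precisely the output of Lemma \ref{join-lemma} applied to $(K_{A,B}, \{A, B\})$ and $(H^*, \mathcal{Q}^*)$: take the block $U$ as the $W_s$ of $\mathcal{Q}^*$ that gets distributed, and send each vertex of $V_a \setminus A$ into side $A$ and each vertex of $V_b \setminus B$ into side $B$; the enlarged sides then become exactly $V_a$ and $V_b$, while the remaining blocks of $\mathcal{Q}^*$ persist as the other blocks of $\mathcal{P}$. Since $(G, \mathcal{P})$ and $(K_{A,B}, \{A, B\})$ are both block-minimal with no IT, Lemma \ref{join-lemma-minimal}(ii) gives condition (a) for $(H^*, \mathcal{Q}^*)$. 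By the inductive hypothesis $(H^*, \mathcal{Q}^*)$ can then be constructed by iterated applications of Lemma \ref{join-lemma} starting from complete bipartite graphs with standard bipartitions, and appending the single application that reintroduces $K_{A,B}$ with $\{A, B\}$ recovers $(G, \mathcal{P})$. The main subtle point will be ruling out the degenerate case $V_a = A, V_b = B$ to ensure $U \neq \emptyset$ and that $\mathcal{Q}^*$ has the correct number of blocks to match the invariant; once that is handled, the rest is essentially bookkeeping that combines Theorem \ref{union-of-two-classes}, Lemma \ref{join-lemma}, and Lemma \ref{join-lemma-minimal}.
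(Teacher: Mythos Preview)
Your proposal is correct and follows essentially the same approach as the paper's proof: induct on $r$, peel off the component $K_{A,B}$ supplied by Theorem~\ref{union-of-two-classes}, check that the remainder still satisfies (a), (b), (c), and invoke the induction hypothesis. One small correction: with your role assignment ($K_{A,B}$ playing the part of $G$ in Lemma~\ref{join-lemma} and $H^*$ playing the part of $H$, since it is the block $U$ of $\mathcal{Q}^*$ that gets distributed), the implication you need---from $G\cup H$ and $G$ block-minimal to $H$ block-minimal---is part~(iii) of Lemma~\ref{join-lemma-minimal}, not part~(ii).
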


\begin{proof}
We use induction on $r$. The case $r = 2$ is just our assumed starting point: $G$ is a complete bipartite graph and $\mathcal{P}$ is its standard bipartition. Suppose that $r \ge 3$. By Theorem \ref{union-of-two-classes}, there exist a component $K_{A, B}$ of $G$ and blocks $V_{r-1}, V_r \in \mathcal{P}$ (after relabeling) such that $A \subseteq V_{r-1}$ and $B \subseteq V_r$. Let $G' = G - (A \cup B)$ and let $\mathcal{P}' = \{V_1, \ldots, V_{r - 2}, V_{r-1}'\}$, where $V_{r-1}' = (V_{r-1} \cup V_r) - (A \cup B)$. 

Note that $V_{r-1}' \neq \emptyset$, as otherwise $A = V_{r-1}$ and $B = V_r$, and because $K_{A,B}$ has no IT with respect to $\{V_{r-1}, V_r\}$, this contradicts the assumption that $G$ is block-minimal with no IT. Thus, $\mathcal{P}'$ is a partition of $V(G')$. Observe that $G = G' \cup K_{A,B}$, and that $\mathcal{P}$ is obtained from $\mathcal{P}'$ and $\{A, B\}$ by distributing every vertex of $V_{r-1}'$ into either $A$ or $B$. That is, $G$ can be constructed from $G'$ and $K_{A,B}$ by applying Lemma \ref{join-lemma}. Since $G$ and $K_{A,B}$ are both block-minimal with no IT, by Lemma \ref{join-lemma-minimal}(iii) $G'$ is also block-minimal with no IT. Moreover, $G'$ is the disjoint union of $r - 2$ complete bipartite graphs. By induction, it follows that $G'$ and $\mathcal{P}'$ can be constructed via Lemma \ref{join-lemma}, by iteratively adding complete bipartite graphs with the standard bipartition. We conclude that the same is true for $G$ and $\mathcal{P}$.
\end{proof}

We remark that the converse of Theorem \ref{characterize-complete-bipartite} also holds: If $G, \mathcal{P}$ is constructed via Lemma \ref{join-lemma} by iteratively adding complete bipartite graphs with the standard bipartition, then conditions (a), (b), (c) are satisfied. This is observed by applying Lemma \ref{join-lemma-minimal}(i) iteratively. 

We note that condition (c) in Theorem \ref{union-of-two-classes} and Theorem \ref{characterize-complete-bipartite} is necessary. Figure \ref{block-minimal} shows an example of a vertex-partitioned graph where conditions (a) and (b) are satisfied but not (c), and the graph does not satisfy the conclusion of either theorem. Conditions (a) and (b) are more easily seen to be necessary.

\begin{cor} \label{characterize-Kdd}
All possible constructions $G, \mathcal{P}$ for Theorem \ref{Szabo-Tardos} can be derived via Lemma \ref{join-lemma}, by iteratively adding $K_{d,d}$ with the standard bipartition.
\end{cor}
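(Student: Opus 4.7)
The plan is to obtain this corollary as a direct consequence of Theorem~\ref{characterize-complete-bipartite}, in the same manner that Corollary~\ref{union-of-two-classes-Kdd} was obtained from Theorem~\ref{union-of-two-classes}. Given any construction $(G, \mathcal{P})$ for Theorem~\ref{Szabo-Tardos}, I would verify that the three hypotheses of Theorem~\ref{characterize-complete-bipartite} hold. Condition (b) (disjoint union of complete bipartite graphs) and condition (c) (number of components equals $r - 1$) are immediate from the definition: $G$ is the disjoint union of $2d - 1$ copies of $K_{d,d}$, and we have $r = 2d$ so that $r - 1 = 2d - 1$.

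The substantive step is verifying condition (a), block-minimality. For $d > 2$, I would argue exactly as in the proof of Corollary~\ref{union-of-two-classes-Kdd}: if $G - V_i$ had no IT with respect to $\mathcal{P} \setminus \{V_i\}$, then by Theorem~\ref{many-copies-Kdd} it would contain $2d - 1$ disjoint copies of $K_{d,d}$, requiring $(2d - 1)(2d) = 4d^2 - 2d$ vertices. But $|V(G - V_i)| = 4d^2 - 2d - (2d - 1) = 4d^2 - 4d + 1$, which is strictly less, a contradiction. The cases $d = 1$ and $d = 2$ admit essentially the same argument: $d = 1$ is trivial since removing either block leaves a single vertex that is its own IT, and for $d = 2$ the analogous counting using Theorem~\ref{cycles-length-1-mod-3} shows that three disjoint cycles of lengths $\equiv 1 \pmod 3$ (each of length at least $4$) require at least $12$ vertices, whereas $|V(G - V_i)| = 9$.

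With (a), (b), (c) verified, Theorem~\ref{characterize-complete-bipartite} yields an iterative construction of $(G, \mathcal{P})$ via Lemma~\ref{join-lemma}, where each step adds a complete bipartite graph with its standard bipartition. To finish, I would observe that Lemma~\ref{join-lemma} takes a disjoint graph union (with no edges across the two summands), so every complete bipartite graph added during the iteration remains a component of the final graph $G$. Since every component of $G$ is $K_{d,d}$, each added graph must itself be $K_{d,d}$, which is the desired conclusion. I do not anticipate any real obstacle here: the hard work lies in Theorem~\ref{union-of-two-classes} (proved in Section~\ref{big-proof}), and this corollary simply checks that the Szab\'o and Tardos setting satisfies the required hypotheses.
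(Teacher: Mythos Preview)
Your proposal is correct and follows essentially the same approach as the paper: verify conditions (a), (b), (c) of Theorem~\ref{characterize-complete-bipartite}, with (b) and (c) immediate and (a) coming from Theorem~\ref{many-copies-Kdd}. You are in fact more careful than the paper in two minor respects: you handle the cases $d=1,2$ separately (the paper's one-line invocation of Theorem~\ref{many-copies-Kdd} technically only covers $d>2$), and you explicitly note that the complete bipartite graphs produced by Theorem~\ref{characterize-complete-bipartite} are components of $G$ and hence must each be $K_{d,d}$.
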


\begin{proof}
Conditions (b) and (c) of Theorem \ref{characterize-complete-bipartite} are satisfied by definition with $r = 2d$. Condition (a) is satisfied by Theorem \ref{many-copies-Kdd} when $d \neq 2$, and is satisfied by Theorem \ref{cycles-length-1-mod-3} when $d = 2$.
\end{proof}

\begin{figure}
	\centering
	\includegraphics[scale=1]{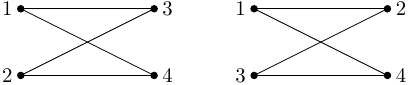}
	\caption{An example of a vertex-partitioned graph satisfying conditions (a) and (b) of Theorem \ref{union-of-two-classes} and Theorem \ref{characterize-complete-bipartite} but not satisfying (c) or the conclusion of either theorem.}
	\label{block-minimal}
\end{figure}

\subsection{Disjoint unions of general graphs} \label{section-general-graphs}
Here we state a general theorem about what happens when we successively apply Lemma \ref{join-lemma} to one graph $G$ and partition $\mathcal{P}$ with no IT. This result is motivated by the fact that many of our constructions are disjoint unions of the same graph, most notably $K_{d,d}$.

\begin{thm} \label{join-general-graphs-2}
Let $G$ be a graph and let $\mathcal{P} = \{V_1, \ldots, V_r\}$ be a partition of $V(G)$, such that $G$ has no IT with respect to $\mathcal{P}$. Suppose that the integer $n \ge 1$ satisfies
\begin{align*}
	\sum_{i \in I} |V_i| \ge n(|I| - 1) + 1 \hspace{5mm} \text{ for every } I \subseteq [r].
\end{align*}
Then there exist an integer $k \ge 1$ and a partition $\mathcal{R} = \{U_1, \ldots, U_{k(r-1)+1}\}$ of the vertices of the graph $G^k = G \sqcup \cdots \sqcup G$ (where $G$ appears $k$ times), such that $|U_i| \ge n$ for every $i$ and there is no IT. Specifically, if $J = \{i : |V_i| < n\} \neq \emptyset$ and $G_J = G \left[\bigcup_{i \in J} V_i \right]$, then we can take any 
\begin{align*}
	k \ge 1 + \sum_{i \in J} \left\lceil \frac{n - |V_i|}{|V(G_J)| - n(|J| - 1)} \right\rceil.
\end{align*}
\end{thm}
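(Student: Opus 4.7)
The plan is to iteratively apply Lemma~\ref{join-lemma} to add $k-1$ new copies of $G$, one per iteration. Writing $\alpha := |V(G_J)| - n(|J|-1)$ and $d_i := n - |V_i|$ for $i \in J$, the hypothesis with $I = J$ gives $\alpha \geq 1$; moreover $\alpha + d_J = n$ (where $d_J := \sum_{i \in J} d_i$) and $|V_i| - \alpha = \sum_{j \in J \setminus \{i\}} d_j \geq 0$, so $|V_i| \geq \alpha$ for every $i \in J$. Set $k_i := \lceil d_i/\alpha \rceil$ and $K := \sum_{i \in J} k_i = k-1$. If $J = \emptyset$, take $k = 1$ and $\mathcal{R} = \mathcal{P}$; otherwise assume $|J| \geq 1$.

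I would view the $k$ copies of $G$ as providing $kr$ positions $V_i^{(s)}$, and for each $i \in J$ designate $k_i$ of these $V_i$-positions to be \emph{sacrificed}, with the remaining $k - k_i$ (together with every position $V_i^{(s)}$ for $i \notin J$) surviving; the $k(r-1) + 1$ survivors will form the blocks of $\mathcal{R}$ after we distribute the sacrificed vertices. Each surviving $V_i$-position with $i \in J$ needs $d_i$ extra vertices to reach size $n$. The global sacrificial budget is $\sum_{i \in J} k_i |V_i| = Kn - \sum_i k_i d_i$, while the total deficit to be covered is $\sum_{i \in J}(k - k_i) d_i = k d_J - \sum_i k_i d_i$, and their difference equals $Kn - k d_J = K\alpha - d_J \geq 0$ (using $K \geq d_J/\alpha$). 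Thus there are enough sacrificial vertices globally, and per-position deficits can be met by distributing freely among the survivors.

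The main obstacle will be scheduling the sacrifices compatibly with the time-ordering constraint of Lemma~\ref{join-lemma}: in each application the sacrificed block's vertices flow \emph{either} to blocks of the accumulated partition (Option~A, sacrifice from the new copy) \emph{or} to blocks of the newly added copy (Option~B, sacrifice from accumulated), but not across iterations. A pure Option-A schedule fails once $|J| \geq 2$, because the last new copy's small surviving positions would have no later iteration in which to receive any vertices. I would therefore interleave the two options so that the running total deficit decreases by $\alpha$ at each step: in an Option-A iteration, sacrifice some $V_{i_t}^{(t)}$ with $i_t \in J$ from the new copy and distribute its $|V_{i_t}| \geq \alpha$ vertices into the current small blocks of the accumulated partition (reducing its deficit by $|V_{i_t}|$ while adding $d_J - d_{i_t}$ from the new survivors, for a net change of $-\alpha$); in an Option-B iteration, sacrifice a small block already in the accumulated partition (size $|V_s|$) and distribute its vertices into the $|J|$ small positions of the newly added copy (again a net change of $-\alpha$ when $|V_s| \leq d_J$). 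After $K$ iterations the total deficit is driven to zero. I expect the technical core to be a careful case analysis or induction on $|J|$ (with the base case $|J| = 1$ handled by the direct ``bank'' scheme: pour $|V_i|$ vertices from each sacrificed $V_i^{(t)}$ into $V_i^{(0)}$ until it reaches size $(k_i+1)|V_i| \geq n$), verifying at each step that a sacrificial small accumulated block of the required size is available and that the residual distribution still meets each per-position deficit.
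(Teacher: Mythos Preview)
Your global accounting is right and you have correctly isolated $\alpha$ as the per-step gain, but you are overcomplicating the scheduling. No interleaving and no induction on $|J|$ are needed: pure Option~B already works once you pick the right distribution rule inside each step, and this is exactly what the paper does.

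Fix $j \in J$ with $|V_j|$ minimum. At each iteration, sacrifice \emph{any} current small block $U$ from the accumulated side (maintaining the invariant $|U| \ge |V_j|$), and distribute its vertices among the $|J|$ small positions of the new copy \emph{unevenly}: for every $i \in J \setminus \{j\}$ send exactly $d_i$ vertices to the new $V_i$, bringing it to size $n$; send the remaining $|U| - (d_J - d_j) \ge |V_j| - (d_J - d_j) = n - d_J = \alpha \ge 1$ vertices to the new $V_j$. The single resulting small block then has size exactly $|U| + \alpha \ge |V_j|$, so the invariant persists and the distribution is always feasible. Each step therefore replaces one small block by one that is $\alpha$ larger; processing the $|J|$ original small blocks one lineage at a time gives precisely $\sum_{i \in J} \lceil d_i/\alpha \rceil$ iterations. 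The ``technical core'' you anticipated---checking that a sacrificial block of the required size exists and that per-position deficits are met---evaporates, because all but one of the new small positions are filled to size $n$ by construction at every step.
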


We sketch a proof of Theorem \ref{join-general-graphs-2}. We start with the graph $G$ and partition $\mathcal{P}$. Let $J = \{i : |V_i| < n\}$. If $J = \emptyset$, then $G$ and $\mathcal{P}$ satisfy the conclusion of the theorem with $k = 1$, and we are done. Otherwise, assume for convenience that $j \in J$ minimizes $|V_j|$. We will successively enlarge each block of $\mathcal{P}$ of size less than $n$ into a block of size at least $n$, by iteratively adding new copies of $G, \mathcal{P}$ and applying Lemma \ref{join-lemma}. At a given step, we fix any block $U$ of size less than $n$. We assume by induction that $|U| \ge |V_j|$. We proceed by adding a copy of $G, \mathcal{P}$ to the current graph and distributing the vertices of $U$ into the copy of the blocks $\{V_i : i \in J\}$ in this added copy of $\mathcal{P}$. We distribute $n - |V_i|$ vertices of $U$ to $V_i$ for each $i \in J \setminus \{j\}$ to create blocks of size $n$, and we distribute the remaining vertices of $U$ to $V_j$ to create one block $U'$ of size at least $|U| + 1$. It is possible to distribute vertices this way because by the inequality $|U| \ge |V_j|$ and our assumption on $n$, we have that $|U| - \sum_{i \in J \setminus \{j\}} (n - |V_i|) \ge 1$ and $|V_j| + \left( |U| - \sum_{i \in J \setminus \{j\}} (n - |V_i|) \right) \ge |U| + 1$. Thus, at each step we create at most one new block $U'$ of size less than $n$, and the size of $U'$ is at least one more than the size of the block $U$ from the previous step. Therefore, after adding sufficiently many copies of $G, \mathcal{P}$, all the blocks will have size at least $n$, and there will be no IT by Lemma \ref{join-lemma}. A careful analysis shows that the number of copies $k$ of $G$ involved is given by the lower bound in the theorem statement, and we may similarly add further copies of $G, \mathcal{P}$ to have the number of copies of $G$ be any $k$ as claimed.

\bigskip

Now we describe a few applications of Theorem \ref{join-general-graphs-2}. When $G  = K_{d,d}$ and $\mathcal{P}$ is the standard bipartition, we can take $n = 2d - 1$ and $k = 2d - 1$ in Theorem \ref{join-general-graphs-2}. This means that the disjoint union of $k = 2d - 1$ copies of $K_{d,d}$ has a partition into blocks of size $n = 2d - 1$ with no IT. Thus, Theorem \ref{join-general-graphs-2} generalizes Theorem \ref{Szabo-Tardos}. In fact, for any $1 \le n \le 2d - 1$, Theorem \ref{join-general-graphs-2} says that we can achieve blocks of size at least $n$ with no IT on the disjoint union of $k = 2\left\lceil \frac{d}{2d - n} \right\rceil - 1$ copies of $K_{d,d}$. We can restate this as follows.

\begin{cor} \label{general-Szabo-Tardos}
For integers $n \ge 1$ and $r \ge 2$ even, let $d =  \left\lceil \frac{rn}{2(r - 1)} \right\rceil$ and let $G$ be the disjoint union of $r - 1$ copies of $K_{d,d}$. There exists a partition $\mathcal{P} = \{V_1, \ldots, V_r\}$ of $V(G)$ such that $|V_i| \ge n$ for every $i$ and there is no IT.
\end{cor}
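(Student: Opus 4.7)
The strategy is to apply Theorem \ref{join-general-graphs-2} directly, taking the base graph to be a single $K_{d,d}$ with its standard bipartition $\{V_1,V_2\}$, so that $|V_1|=|V_2|=d$ and there is no IT. Since this graph has only two blocks, $k$ copies produce a partition into $k+1$ classes, and setting $k=r-1$ yields exactly $r$ blocks on the disjoint union of $r-1$ copies of $K_{d,d}$, which is precisely the graph the corollary demands.

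Two small algebraic facts then need to be checked. The first is the hypothesis of Theorem \ref{join-general-graphs-2}, which reduces to the single nontrivial inequality $|V_1|+|V_2|\ge n+1$, i.e.\ $n\le 2d-1$. From the definition $d=\lceil rn/(2(r-1))\rceil$ one has
\[
2d - n \;\ge\; \frac{rn}{r-1} - n \;=\; \frac{n}{r-1} \;>\; 0,
\]
and since $2d-n$ is an integer, this forces $n\le 2d-1$.

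The second is that the choice $k=r-1$ meets the explicit lower bound in Theorem \ref{join-general-graphs-2}. Using the simplified form $k\ge 2\lceil d/(2d-n)\rceil - 1$ noted in the discussion preceding the corollary (which follows by rewriting $d/(2d-n) = 1 + (n-d)/(2d-n)$), this reduces to $\lceil d/(2d-n)\rceil \le r/2$. But $d/(2d-n)\le r/2$ is equivalent to $d\ge rn/(2(r-1))$, which is exactly how $d$ was chosen; and because $r$ is even, $r/2$ is already an integer, so the ceiling is absorbed and the inequality holds.

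The only step that requires any attention is this last ceiling estimate, because the tightness of the bound on $d$ could in principle be spoiled by rounding; the hypothesis that $r$ is even is precisely what prevents this. Apart from that, the corollary is essentially a repackaging of Theorem \ref{join-general-graphs-2} in the special case where the base graph is $K_{d,d}$ with its standard bipartition.
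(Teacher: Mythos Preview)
Your proposal is correct and follows essentially the same approach as the paper: both derive the corollary by applying Theorem~\ref{join-general-graphs-2} to $K_{d,d}$ with its standard bipartition, using the formula $k=2\lceil d/(2d-n)\rceil-1$ that the paper states in the paragraph immediately preceding the corollary. You have simply filled in the algebraic verifications (that $n\le 2d-1$ and that $k=r-1$ meets the bound, using the evenness of $r$) which the paper leaves implicit when it says ``we can restate this as follows.''
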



Corollary \ref{general-Szabo-Tardos} represents the more general version of Theorem \ref{Szabo-Tardos} proven by Szab\'o and Tardos \cite{SzTa}. It is the result they used to solve one direction of an extremal problem of Bollob\'as, Erd\H{o}s, and Szemer\'edi \cite{BoErSz} mentioned in the Introduction (the other direction being solved in \cite{HaSz1}). In terms of $d \ge 1$ and $r \ge 2$, the block size lower bound $n$ in Corollary \ref{general-Szabo-Tardos} is equal to $\left\lfloor \frac{2d(r - 1)}{r} \right\rfloor$.

For another application of Theorem \ref{join-general-graphs-2}, recall that $K_r(m)$ denotes the complete $r$-partite graph with $m$ vertices in each part. The disjoint union of $r - 1$ copies of $K_{r}(m)$ has no IT with respect to a standard $r$-partition. Applying Theorem \ref{join-general-graphs-2} to this graph gives the following.

\begin{cor} \label{complete-multipartite}
Let $r \ge 2, m \ge 1$. Let $G$ be the disjoint union of $(r - 1)\left(1 + r \left\lceil \frac{m - 1}{r - 1} \right\rceil \right)$ copies of $K_{r}(m)$, which has maximum degree $d = (r - 1)m$. There exists a partition $\mathcal{P} = \{V_1, \ldots, V_s\}$ of $V(G)$ such that $|V_i| \ge rm - 1 = d + m - 1$ for every $i$ and there is no IT.
\end{cor}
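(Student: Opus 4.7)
The plan is to apply Theorem \ref{join-general-graphs-2} to the starting vertex-partitioned graph $(G_0, \mathcal{P}_0)$, where $G_0$ is the disjoint union of $r - 1$ copies of $K_r(m)$ and $\mathcal{P}_0 = \{V_1, \ldots, V_r\}$ is the standard $r$-partition (with $V_i$ collecting the $i$-th independent set from each component). First I would observe that $G_0$ has no IT with respect to $\mathcal{P}_0$: any transversal selects $r$ vertices from $r - 1$ components, so by pigeonhole two of them lie in the same $K_r(m)$-component; since they come from distinct $V_i$'s, they lie in different parts of that component and are therefore adjacent.

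Next I would verify the hypothesis of Theorem \ref{join-general-graphs-2} with target block size $n = rm - 1$. Each $V_i$ has size $(r-1)m$, so for any $I \subseteq [r]$ with $|I| = t$ the inequality to check is $t(r-1)m \ge (rm - 1)(t - 1) + 1$, which rearranges to $rm - t(m - 1) \ge 2$. Since $1 \le t \le r$ and $m \ge 1$, one has $rm - t(m-1) \ge rm - r(m-1) = r \ge 2$, so the inequality holds for every $I$.

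Finally I would plug the numbers into the explicit bound on $k$ supplied by Theorem \ref{join-general-graphs-2}. The deficient set $J = \{i : |V_i| < n\}$ is empty when $m = 1$ and equals $[r]$ when $m \ge 2$. In the latter case $G_J = G_0$ has $|V(G_J)| = (r-1)rm$, each $n - |V_i| = m - 1$, and the denominator is $|V(G_J)| - n(|J| - 1) = (r-1)rm - (rm-1)(r-1) = r - 1$. Substituting gives that $k = 1 + r\lceil (m-1)/(r-1)\rceil$ suffices (and this value equals $1$ when $m = 1$, covering that case as well). The graph $G_0^k$ is then the disjoint union of $k(r-1) = (r-1)(1 + r\lceil (m-1)/(r-1)\rceil)$ copies of $K_r(m)$, has maximum degree $d = (r-1)m$, and Theorem \ref{join-general-graphs-2} yields a partition with every block of size at least $rm - 1 = d + m - 1$ and no IT.

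The main obstacle is really just the algebraic verification of the condition $\sum_{i \in I}|V_i| \ge n(|I|-1) + 1$ and the book-keeping needed to extract the exact value of $k$ claimed in the statement; there is no deeper conceptual difficulty, as the corollary is essentially a direct instantiation of Theorem \ref{join-general-graphs-2} on the obvious starting configuration.
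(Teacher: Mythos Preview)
Your proposal is correct and follows exactly the route the paper intends: the text preceding Corollary~\ref{complete-multipartite} simply says that the disjoint union of $r-1$ copies of $K_r(m)$ with the standard $r$-partition has no IT and that applying Theorem~\ref{join-general-graphs-2} yields the corollary. Your verification of the inequality $\sum_{i\in I}|V_i|\ge n(|I|-1)+1$ and your computation of $k=1+r\lceil(m-1)/(r-1)\rceil$ from the explicit bound in Theorem~\ref{join-general-graphs-2} are precisely the details the paper leaves implicit, and they are all correct.
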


\subsection{Generalization to simplicial complexes} \label{section-general-complexes}
In this subsection, we note how to generalize Lemma \ref{join-lemma} and our construction method to all simplicial complexes, and outline a few applications.

A \textit{simplicial complex} (or a \textit{complex} for short) $\mathcal{C}$ on finite ground set $V = V(\mathcal{C})$ is a collection of subsets of $V$ such that if $A \in \mathcal{C}$ and $B \subseteq A$, then $B \in \mathcal{C}$. The sets in $\mathcal{C}$ are usually called \textit{simplices}. For example, the collection $\mathcal{I}(G)$ of independent sets of a graph $G$ is a complex, called the \textit{independence complex} of $G$. Given two complexes $\mathcal{C}$ and $\mathcal{D}$ on disjoint ground sets, their \textit{join} is the simplicial complex $\mathcal{C} \ast \mathcal{D} = \{A \cup B : A \in \mathcal{C} \text{ and } B \in \mathcal{D} \}$. For example, if $G, H$ are disjoint graphs and $\mathcal{I}(G), \mathcal{I}(H)$ are their independence complexes, then $\mathcal{I}(G \cup H) = \mathcal{I}(G) \ast \mathcal{I}(H)$.

Given a simplicial complex $\mathcal{C}$ and a partition $\mathcal{P} = \{V_1, \ldots, V_r\}$ of $V(\mathcal{C})$, we wish to decide whether $\mathcal{C}$ contains a transversal of $\mathcal{P}$, which we call a \textit{multicolored simplex}. This generalizes the concept of an independent transversal, which is the special case where $\mathcal{C}$ is the independence complex of a graph. We can easily generalize Lemma \ref{join-lemma} and Theorem \ref{join-general-graphs-2} as follows, using the same proofs.

\begin{lem} \label{join-lemma-complex}
Let $\mathcal{C}$ and $\mathcal{D}$ be complexes on disjoint ground sets, and let $\mathcal{P} = \{V_1, \ldots, V_r\}$ and $\mathcal{Q} = \{W_1, \ldots, W_s\}$ be partitions of $V(\mathcal{C})$ and $V(\mathcal{D})$ respectively. Let $\mathcal{R} = \{V_1', \ldots, V_r', W_1, \ldots, W_{s-1}\}$, where $V_1' \supseteq V_1, \ldots, V_r' \supseteq V_r$ are obtained by distributing each of the vertices of $W_s$ into one of $V_1, \ldots, V_r$ arbitrarily. If $\mathcal{C}$ has no multicolored simplex with respect to $\mathcal{P}$ and if $\mathcal{D}$ has no multicolored simplex with respect to $\mathcal{Q}$, then $\mathcal{C} \ast \mathcal{D}$ has no multicolored simplex with respect to $\mathcal{R}$.
\end{lem}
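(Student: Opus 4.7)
The plan is to mimic the proof of Lemma \ref{join-lemma} essentially verbatim, with ``IT of $G$'' replaced by ``multicolored simplex of $\mathcal{C}$'' and with the disjoint-graphs argument replaced by the defining property of the join of complexes. Suppose for contradiction that $\mathcal{C} \ast \mathcal{D}$ has a multicolored simplex $T = \{v_1, \ldots, v_r, w_1, \ldots, w_{s-1}\}$ with respect to $\mathcal{R}$, where $v_i \in V_i'$ for $1 \le i \le r$ and $w_j \in W_j$ for $1 \le j \le s-1$.

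The first step is to decompose $T$ according to the two ground sets. By the definition of $\mathcal{C} \ast \mathcal{D}$, we can write $T = A \cup B$ with $A \in \mathcal{C}$, $B \in \mathcal{D}$, $A \subseteq V(\mathcal{C})$, and $B \subseteq V(\mathcal{D})$. Because $W_j \subseteq V(\mathcal{D})$ for every $j < s$, each $w_j$ lies in $B$; because $V_i \subseteq V(\mathcal{C})$ but $V_i' \setminus V_i \subseteq W_s \subseteq V(\mathcal{D})$, each $v_i$ lies in $A$ or in $B$ according to whether $v_i \in V_i$ or $v_i \in W_s$.

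The second step is the case split. If every $v_i$ lies in $V_i$, then $\{v_1, \ldots, v_r\} \subseteq A$, so by the downward-closure axiom this subset is itself a simplex of $\mathcal{C}$ containing one vertex from each block of $\mathcal{P}$, contradicting the hypothesis that $\mathcal{C}$ has no multicolored simplex. Otherwise some $v_j$ lies in $W_s$, in which case $\{w_1, \ldots, w_{s-1}, v_j\} \subseteq B$ is a simplex of $\mathcal{D}$ (again by downward closure) containing one vertex from each block of $\mathcal{Q}$, contradicting the hypothesis on $\mathcal{D}$.

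There is essentially no obstacle here: the only substantive observation beyond the proof of Lemma \ref{join-lemma} is that the decomposition $T = A \cup B$ is exactly the abstract analogue of restricting an IT of $G \cup H$ to the two disjoint vertex sets, and that the downward-closure axiom plays the role that was implicit for independent sets in the graph version. Consequently the proof is a short, mechanical translation.
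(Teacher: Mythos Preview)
Your proof is correct and is exactly what the paper intends: it explicitly states that Lemma~\ref{join-lemma-complex} follows ``using the same proofs'' as Lemma~\ref{join-lemma}, and your argument is precisely that translation, with the join decomposition and downward closure making explicit what was implicit in the graph case.
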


\begin{thm} \label{join-general-complexes-2}
Let $\mathcal{C}$ be a complex and let $\mathcal{P} = \{V_1, \ldots, V_r\}$ be a partition of $V(\mathcal{C})$, such that $\mathcal{C}$ has no multicolored simplex with respect to $\mathcal{P}$. Suppose that integer $n \ge 1$ satisfies
\begin{align*}
	\sum_{i \in I} |V_i| \ge n(|I| - 1) + 1 \hspace{5mm} \text{ for every } I \subseteq [r].
\end{align*}
Then there exist an integer $k \ge 1$ and a partition $\mathcal{R} = \{U_1, \ldots, U_{k(r-1)+1}\}$ of the ground set of the complex $\mathcal{C}^k = \mathcal{C} \ast \cdots \ast \mathcal{C}$ (where $\mathcal{C}$ appears $k$ times), such that $|U_i| \ge n$ for every $i$ and $\mathcal{C}^k$ has no multicolored simplex with respect to $\mathcal{R}$. Specifically, if $J = \{i : |V_i| < n\} \neq \emptyset$ and $\mathcal{C}_J = \mathcal{C} \left[\bigcup_{i \in J} V_i \right]$, then we can take any 
\begin{align*}
	k \ge 1 + \sum_{i \in J} \left\lceil \frac{n - |V_i|}{|V(\mathcal{C}_J)| - n(|J| - 1)} \right\rceil.
\end{align*}
\end{thm}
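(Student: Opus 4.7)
The plan is to mimic the iterative argument sketched for Theorem \ref{join-general-graphs-2}, replacing Lemma \ref{join-lemma} by its simplicial analogue Lemma \ref{join-lemma-complex} at every step. If $J = \emptyset$ then the conclusion holds trivially with $k = 1$, so assume $J \neq \emptyset$, set $\delta = \sum_{i \in J} |V_i| - n(|J| - 1)$ (which is $\geq 1$ by the hypothesis applied to $I = J$), and fix $j \in J$ minimizing $|V_j|$.

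At each stage I maintain a complex obtained by iteratively joining copies of $\mathcal{C}$, together with a partition $\mathcal{R}$ having no multicolored simplex, subject to the invariant that every block of size less than $n$ has size at least $|V_j|$. At a typical step I fix such a ``small'' block $U$, adjoin a fresh copy of $(\mathcal{C}, \mathcal{P})$, and apply Lemma \ref{join-lemma-complex} with $W_s = U$ to distribute its vertices into the blocks $\{V_i : i \in J\}$ of the new copy: I place $n - |V_i|$ vertices into $V_i$ for each $i \in J \setminus \{j\}$, making that block size exactly $n$, and place the remaining vertices into $V_j$, producing a new small block $U'$. A short computation using the hypothesis for $I = J$ shows that the number of remaining vertices equals $|U| - |V_j| + \delta \geq 1$ (so the distribution is feasible), and that $|U'| = |U| + \delta \geq |V_j|$, which preserves the invariant. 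Blocks of the new copy outside $J$ have size $\geq n$ and stay untouched, as do all preexisting blocks other than $U$.

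Each step turns a small block of size $s$ into one of size $s + \delta$ at the cost of a single new copy of $\mathcal{C}$, so processing each of the $|J|$ original small blocks $V_i$ through its chain of successors up to size $\geq n$ takes exactly $\lceil (n - |V_i|)/\delta \rceil$ steps, giving the stated count $k = 1 + \sum_{i \in J} \lceil (n - |V_i|)/\delta \rceil$. Each application of Lemma \ref{join-lemma-complex} raises the block count by $r - 1$, so the final partition has $k(r - 1) + 1$ blocks, matching the statement. To realize any larger $k$, once every block already has size $\geq n$ we simply adjoin further copies of $(\mathcal{C}, \mathcal{P})$ and distribute any single block of each new copy arbitrarily into the existing partition; by Lemma \ref{join-lemma-complex} this preserves both the block size condition and the absence of a multicolored simplex. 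The only real content is the feasibility and invariant check above; everything else is bookkeeping that transfers verbatim from the graph case.
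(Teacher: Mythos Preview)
Your argument is essentially the paper's own: the paper states that Theorem~\ref{join-general-complexes-2} follows by the same proof as Theorem~\ref{join-general-graphs-2}, and your write-up is exactly that sketch transported to complexes via Lemma~\ref{join-lemma-complex}, with the ``careful analysis'' of the step count made explicit through the quantity $\delta = |V(\mathcal{C}_J)| - n(|J|-1)$.

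One small slip: in your final sentence for realizing any larger $k$, you say to ``distribute any single block of each new copy arbitrarily into the existing partition.'' That is the wrong direction when $|J|\ge 2$: the fresh copy of $(\mathcal{C},\mathcal{P})$ carries $|J|$ blocks of size less than $n$, and dissolving only one of them leaves the others as small blocks in the new partition. The fix is immediate---just keep running the same step you already described (pick any block $U$ of the current partition, now of size $\ge n$, and distribute it into the $J$-blocks of the new copy as before); the resulting replacement block then has size $|U|+\delta \ge n+1$, and all other new blocks have size at least $n$. With that correction your proof matches the paper's.
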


One simplicial complex which can be studied using these ideas is that of independent transversals (IT's) in uniform hypergraphs. Given a hypergraph $H$ and a partition $\mathcal{P}$ of $V(H)$, we wish to find a transversal of $\mathcal{P}$ that is an independent set of $H$ (does not contain an edge). Let $H_0=K_r^r(m)$ be the complete $r$-partite $r$-uniform hypergraph with $m$ vertices in each part, and let $\mathcal{P}_0$ be the standard $r$-partition of $V(H_0)$. Note that $H_0$ has no IT with respect to $\mathcal{P}_0$. Applying Theorem \ref{join-general-complexes-2} with $\mathcal{C}$ being the independence complex of $H_0$, we derive an example of a hypergraph $H$ and partition $\mathcal{P}$ such that $|V_i| \ge n = \left\lfloor \frac{rm - 1}{r - 1} \right\rfloor$ for every $V_i \in \mathcal{P}$ and there is no IT. Letting $d = m^{r - 1}$ be the maximum degree of $H$, we find that $d \ge \left( \frac{(r - 1)n + 1}{r} \right)^{r - 1}$. We leave the following as an open problem.

\begin{prob}
For an $r$-uniform hypergraph $G$ with maximum degree $d$ and partition $\mathcal{P} = \{V_1, \ldots, V_n\}$ of $V(G)$, do the conditions $|V_i| \ge n$ for every $i$ and $d \le \left( 1 - \frac{1}{r} \right)^{r - 1}n^{r-1}$ guarantee the existence of an IT?
\end{prob}

For context, a hypergraph generalization of a Lov\'asz local lemma argument by Alon \cite{Al2} shows that $d \le \frac{1}{er}n^{r - 1}$ implies the existence of an IT. This result is slightly improved by Wanless and Wood \cite{WaWo}, who showed that $d \le \frac{1}{r}(1 - \frac{1}{r})^{r-1} n^{r - 1}$ implies the existence of an IT. They proved this result more strongly when $d$ represents the \textit{maximum block average degree} of the vertex-partitioned graph, and moreover they showed that there exist exponentially many IT's in this case. Groenland, Kaiser, Treffers, and Wales \cite{GrKaTrWa} later showed that in terms of the maximum block average degree, the bound of Wanless and Wood is asymptotically best possible. For the case of graphs $r = 2$, it was shown in \cite{HaWd} that we can effectively interpolate between the optimal maximum degree and maximum block average degree bounds on the block sizes. But for hypergraphs, we still do not know the optimal bound in terms of just the maximum degree. 

A special case of the hypergraph IT problem is that of $J$-free transversals of graphs, where $J$ is a fixed $k$-regular graph. This problem was originally studied by Szab\'o and Tardos \cite{SzTa}. Given a vertex-partitioned graph $G$, we seek a transversal $T$ of the blocks such that the induced subgraph $G[T]$ does not contain a subgraph isomorphic to $J$. This time the parameter of interest is the maximum degree $d$ of the graph $G$. Independent transversals correspond to the case $J = K_2$, and the general case is the hypergraph IT problem in which $r=|V(J)|$ and the edges of the $r$-uniform hypergraph $H$ are those $r$-subsets of $V(G)$ that induce a subgraph containing an isomorphic copy of $J$. Here the $r$-uniform hypergraph $H_0=K_r^r(m)$ can be associated with the graph $J(m)$, the $m$-fold \textit{blow-up} of $J$, which is the graph obtained from $J$ by replacing every vertex of $J$ by an independent set of size $m$ and replacing every edge by a copy of the complete bipartite graph $K_{m,m}$. When $d$ is a multiple of $k$, the above argument applied with $J(d/k)$ gives a graph $G$ with maximum degree $d$ and partition $\mathcal{P}$ such that $|V_i| \ge \left\lceil \frac{|V(J)|}{(|V(J)| - 1)k } d \right\rceil - 1$ for every $V_i \in \mathcal{P}$ and there is no $J$-free transversal. This lower bound on the block sizes was also derived by Szab\'o and Tardos \cite{SzTa}, and the following problem is still open.

\begin{prob}
For a graph $G$ with maximum degree $d$, partition $\mathcal{P} = \{V_1, \ldots, V_r\}$ of $V(G)$, and regular graph $J$, does the condition $|V_i| \ge \left\lceil \frac{|V(J)|}{(|V(J)| - 1)k } d \right\rceil$ for every $i$ guarantee the existence of a $J$-free transversal?
\end{prob}

The case where $J$ is a clique may be of particular interest. Several further settings in which these arguments apply, including rainbow matchings, are explored in detail in \cite{WdThesis}.

\section{Proof of Theorem \ref{union-of-two-classes}} \label{big-proof}
In this section, we prove Theorem \ref{union-of-two-classes}. Our proof adapts combinatorial methods from \cite{HaSz1, HaSzTa}. We will follow the same argument and terminology as \cite{HaSz1} for the construction of feasible pairs, but the series of claims after Claim \ref{claim-1} are more specific to our setting.

Let graph $G$ and vertex partition $\mathcal{P} = \{V_1, \ldots, V_r\}$ satisfy conditions (a), (b), (c) of Theorem \ref{union-of-two-classes}. A \textit{partial} independent transversal is an independent set of $G$ containing at most one vertex from each block $V_i$. Let $\mathcal{T}$ be the set of all partial independent transversals of $G$. For a partial independent transversal $T \in \mathcal{T}$ and a vertex $w \in V(G) \setminus T$, let $C(w, T)$ denote the vertex set of the component of $G[\{w\} \cup T]$ containing $w$. Then $G[C(w, T)]$ is a star with center $w$. A block $V_i$ is \textit{active} for a subset $I \subseteq V(G)$ if $V_i \cap I \neq \emptyset$. Let $S(I)$ denote the set of active blocks for $I$. The \textit{block graph} $\mathcal{G}_I$ of $I$ is obtained from $G[I]$ by contracting, for each $V_i \in S(I)$, the vertices of $V_i \cap I$ into a single vertex. Thus the vertex set of $\mathcal{G}_I$ is $S(I)$. A \textit{non-trivial} star is a star with at least two vertices. We say that a pair $(I, T)$ is \textit{feasible} if
\begin{itemize}
	\item[(i)] $I \subseteq V(G)$ and $T \in \mathcal{T}$ is a partial independent transversal of maximum size;
	\item[(ii)] $G[I]$ is a forest whose components are the $|W|$ non-trivial vertex-disjoint stars $C(w, T)$, for $w \in W = I \setminus T$; and
	\item[(iii)] the block graph $\mathcal{G}_I$ is a tree on vertex set $S(I)$.
\end{itemize}
See Figure 2 in \cite{HaSz1} for an illustration of a feasible pair. Note that $(\emptyset, T)$ is a feasible pair for any $T \in \mathcal{T}$. Our goal is to construct a feasible pair $(I, T)$ that is an \textit{induced matching configuration} (IMC), meaning that $G[I]$ is a perfect matching and $\mathcal{G}_I$ is a tree on $r$ vertices (meaning that every block of $\mathcal{P}$ is active). See Figure 1 in \cite{HaSz1} for an illustration of an IMC (though unlike there, we will still distinguish between $T$ and $I \setminus T$). IMC's are the main structures that allow us to prove Theorem \ref{union-of-two-classes}.

To construct an IMC, we follow the same algorithm as in \cite{HaSz1}, Section 2. Start by fixing a block $V_i \in \mathcal{P}$. By condition (a) of Theorem \ref{union-of-two-classes}, there exists a maximum partial independent transversal $T_0 \in \mathcal{T}$ that intersects every block of $\mathcal{P}$ except $V_i$. Our initial pair is $(\emptyset, T_0)$. Suppose that we currently have the pair $(I, T)$. If $I = \emptyset$, we let $w$ be any vertex in $V_i$. Otherwise, suppose there exists a vertex $w \in \bigcup_{V_j \in S(I)} V_j$ (in one of the active blocks) that is not in $I$ and is not adjacent to any vertex in $I$. Let $T' \in \mathcal{T}$ be a maximum partial independent transversal that, subject to the conditions that $T'$ agrees with $T$ inside $S(I)$ and also satisfies $(T' \setminus I) \cap N_G(I) = \emptyset$, minimizes the degree $\text{deg}_{T'}(w)$ of $w$ into $T'$. Then we update the pair $(I, T)$ to $(I \cup C(w, T'), T')$. We iterate until there are no more choices of vertices $w$ as above. Note that $I$ grows by at least one vertex at each iteration, so the procedure terminates.

\begin{claim} \label{claim-1}
A feasible pair $(I, T)$ is maintained throughout the above procedure.
\end{claim}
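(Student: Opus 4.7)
The plan is to prove Claim \ref{claim-1} by induction on the iterations of the algorithm. The base case is the initial pair $(\emptyset, T_0)$: condition (i) holds because block-minimality of $G$ guarantees a maximum PIT of size $r - 1$ intersecting every block except $V_i$, while conditions (ii) and (iii) hold vacuously when $I = \emptyset$.

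For the inductive step, assume $(I, T)$ is feasible and let $(I', T') := (I \cup C(w, T'), T')$ be the updated pair produced by one iteration. Condition (i) is immediate from the construction of $T'$. For (ii), I will first show the existing star structure is preserved. Since $T'$ agrees with $T$ on the blocks in $S(I)$, we have $T \cap I = T' \cap I$, and since $(T' \setminus I) \cap N_G(I) = \emptyset$, every neighbor of $u \in W$ that lies in $T'$ must in fact lie in $T \cap I$. Hence $C(u, T') = C(u, T)$ for every $u \in W$, so the old stars reappear intact in $G[I']$. Using $w \notin I$ and $N_G(w) \cap I = \emptyset$, the new star $C(w, T')$ is automatically vertex-disjoint from the old ones, since any leaf of $C(w, T')$ lies in $N_G(w) \cap T' \subseteq V(G) \setminus I$. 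The non-triviality of $C(w, T')$, meaning that $w$ has at least one neighbor in $T'$, follows from the maximality of $T'$ via a short case analysis as in \cite{HaSz1}: otherwise one could enlarge $T'$ (when $T' \cap V_{j(w)} = \emptyset$) or produce a competing maximum PIT satisfying the constraints, contradicting the choice of $T'$.

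For (iii), the new edges of $\mathcal{G}_{I'}$ (beyond those in $\mathcal{G}_I$) are the contracted images of the star edges $w\ell$, $\ell \in N_{T'}(w)$. Since $T'$ is a PIT, distinct leaves lie in distinct blocks, so these new edges form a ``star'' in the block graph emanating from the block $V_{j(w)}$ of $w$. The heart of the argument, and the main obstacle, is to show that each leaf block $V_{j(\ell)}$ lies \emph{outside} $S(I)$; otherwise, contracting the edge $w\ell$ would connect two existing vertices of the tree $\mathcal{G}_I$ and create a cycle. I expect to argue this by contradiction: if $V_{j(\ell)} \in S(I)$, then the agreement condition forces $\ell \in T \cap V_{j(\ell)} \setminus I$, and feasibility (ii) for $(I, T)$ implies that $V_{j(\ell)} \cap I$ consists only of centers of old stars. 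One then constructs an alternative maximum PIT $T''$ still respecting the agreement on $S(I)$ and the avoidance $(T'' \setminus I) \cap N_G(I) = \emptyset$, but with $\deg_{T''}(w) < \deg_{T'}(w)$---achieved by swapping $\ell$ for a suitable non-neighbor of $w$ in $V_{j(\ell)}$ and cascading, if necessary, along the star centered at the vertex of $V_{j(\ell)} \cap W$---which contradicts the minimality of $\deg_{T'}(w)$. Once this is established, $\mathcal{G}_{I'}$ equals $\mathcal{G}_I$ together with a pendant star of new edges attached at $V_{j(w)}$, and is thus a tree on $S(I')$, completing the induction.
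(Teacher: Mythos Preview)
Your induction scheme and the arguments for (i) and for the preservation of the old stars in (ii) are correct and follow the approach of \cite{HaSz1}. The real difficulty, as you correctly locate, is (iii), and here your argument has a gap.

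You propose to contradict the minimality of $\deg_{T'}(w)$ by swapping $\ell$ out for some non-neighbor of $w$ in $V_{j(\ell)}$, cascading if necessary along the star centred at a vertex $u\in V_{j(\ell)}\cap W$. But $V_{j(\ell)}\in S(I)$ by hypothesis, so \emph{any} modification of $T'$ on the block $V_{j(\ell)}$ destroys the constraint ``$T''$ agrees with $T$ inside $S(I)$''. The cascade only makes matters worse: the leaves of the star at $u$ sit in blocks that are themselves in $S(I)$, so each further swap again violates agreement. Hence your $T''$ is not an admissible competitor in the minimisation defining $T'$, and no contradiction is obtained. (The same obstacle undermines your sketch of the ``competing PIT'' case for non-triviality in (ii), where $V_{j(w)}\in S(I)$ as well.)

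The argument in \cite{HaSz1} does not use a swap here. Instead one proves, simultaneously with feasibility, the extra invariant
\[
T\cap V_j\subseteq I\qquad\text{for every }V_j\in S(I).
\]
A block $V_j$ first becomes active either as $V_i$ itself (where $T_0\cap V_i=\emptyset$), or as the block of a leaf of some new star, in which case that leaf is the unique $T'$-vertex of $V_j$ and already lies in the new $I'$; thereafter the agreement condition freezes $T\cap V_j$, so the invariant persists. Granting this, (iii) is immediate: if $V_{j(\ell)}\in S(I)$ then $\ell\in T'\cap V_{j(\ell)}=T\cap V_{j(\ell)}\subseteq I$, contradicting $\ell\in N_G(w)$ together with $N_G(w)\cap I=\emptyset$. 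The same invariant also cleans up non-triviality in (ii): if $\deg_{T'}(w)=0$ then, using $T'\cap V_{j(w)}\subseteq I$, one obtains either a larger PIT or a full IT of $G$.
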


See \cite{HaSz1} for a proof of Claim \ref{claim-1}. Let $(I, T)$ be the feasible pair once the procedure terminates. Now we prove a series of claims showing that $(I, T)$ is an IMC. Let $U(I) = \bigcup_{V_j \in S(I)} V_j$ be the set of vertices that lie in an active block for $I$.

\begin{claim} \label{claim-2}
Every vertex in $U(I)$ lies in a component of $G$ intersecting $I$.
\end{claim}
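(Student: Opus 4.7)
The plan is to prove Claim \ref{claim-2} by contradiction, leveraging only the termination condition of the algorithm that produced $(I,T)$. At termination, there is no vertex $w \in U(I) \setminus I$ with $w \notin N_G(I)$, since otherwise the algorithm would have chosen such a $w$ and extended the pair using an appropriate maximum partial IT $T'$. Thus the content of termination that I will use is: every vertex of $U(I) \setminus I$ has at least one neighbor in $I$.

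Given this, I will assume for contradiction that some $v \in U(I)$ has its $G$-component $K$ disjoint from $I$. Two simple deductions finish the argument. First, $v \in K$ and $K \cap I = \emptyset$ force $v \notin I$. Second, every neighbor of $v$ in $G$ lies in $K$ and therefore outside $I$, so $v$ has no neighbor in $I$. Combined with $v \in U(I)$ (i.e., $v$ lies in some active block $V_j \in S(I)$), the vertex $v$ then satisfies all the requirements to serve as the next choice $w$, contradicting termination.

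I expect no real obstacle here. The only ingredient beyond unwinding the termination condition is that the neighborhood of a vertex in $G$ is contained in its own connected component, which is just the definition of a component. In particular, the structural assumptions of Theorem \ref{union-of-two-classes} (that $G$ is a disjoint union of complete bipartite graphs with exactly $r-1$ components, and is block-minimal) play no role in Claim \ref{claim-2}; they will enter only in the subsequent claims building up to the IMC property.
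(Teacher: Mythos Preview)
Your proposal is correct and follows essentially the same argument as the paper: both proofs observe that a vertex $v\in U(I)$ lying in a component disjoint from $I$ would satisfy $v\notin I$ and $v\notin N_G(I)$, so the procedure could have continued with $w=v$, contradicting termination. Your write-up simply spells out the two deductions (that $v\notin I$ and that $v$ has no neighbor in $I$) a bit more explicitly than the paper does.
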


\begin{proof}
If $v \in U(I)$ and $v$ is in a component not intersecting $I$, then $v \notin I$ and $v$ is independent of $I$, so we could have continued our procedure with the vertex $v$, contradicting that $(I, T)$ is the feasible pair once the procedure terminated.
\end{proof}

\begin{claim} \label{claim-3}
Every vertex in $T \setminus I$ lies in a component of $G$ not intersecting $I$.
\end{claim}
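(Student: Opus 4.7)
The plan is to argue by contradiction, exploiting the invariant $(T \setminus I) \cap N_G(I) = \emptyset$ (no vertex of $T \setminus I$ is adjacent to any vertex of $I$) that the algorithm maintains, together with the complete bipartite structure of the components of $G$ furnished by condition (b) of Theorem \ref{union-of-two-classes}. Suppose for contradiction that $v \in T \setminus I$ lies in a component $K_{A,B}$ of $G$ with $I \cap K_{A,B} \neq \emptyset$, and without loss of generality $v \in A$.

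First I would verify the invariant. It holds initially since $I = \emptyset$. At each iteration $(I, T) \mapsto (I \cup C(w, T'), T')$ the new partial IT $T'$ is explicitly required to satisfy $(T' \setminus I) \cap N_G(I) = \emptyset$, so it remains to check propagation under the enlargement $I_{\mathrm{new}} = I \cup C(w, T')$. Since $w \notin T'$ (from feasibility (ii)) and $G[T']$ is edgeless, the leaves of $C(w, T')$ are precisely $T' \cap N_G(w)$, giving $T' \setminus I_{\mathrm{new}} = (T' \setminus I) \setminus N_G(w)$. Writing $N_G(I_{\mathrm{new}}) = N_G(I) \cup N_G(w) \cup \bigcup_{\ell} N_G(\ell)$, the intersection $(T' \setminus I_{\mathrm{new}}) \cap N_G(I_{\mathrm{new}})$ vanishes piece by piece: the $N_G(I)$-piece by the pre-update condition, the $N_G(w)$-piece by the subtraction, and each $N_G(\ell)$-piece because $\ell$ lies in the independent set $T'$.

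Applying the invariant to $v$, I deduce that $v$ has no neighbor in $I$. Since $K_{A,B}$ is complete bipartite and $v \in A$, the vertex $v$ is adjacent to every member of $B \cap K_{A,B}$, forcing $I \cap B \cap K_{A,B} = \emptyset$ and hence $I \cap K_{A,B} \subseteq A$. On the other hand, by feasibility condition (ii) the nonempty set $I \cap K_{A,B}$ is a disjoint union of components of $G[I]$, each of which is a non-trivial star. In the bipartite graph $K_{A,B}$, the center and at least one leaf of any such star must lie on opposite sides of the bipartition $(A, B)$, so $I \cap K_{A,B}$ must meet $B$, contradicting $I \cap K_{A,B} \subseteq A$.

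The main obstacle is the bookkeeping to confirm that $(T \setminus I) \cap N_G(I) = \emptyset$ really propagates under the enlargement of $I$ by a star; once this is secured, the complete bipartiteness from hypothesis (b) supplies the decisive rigidity, since a $T \setminus I$-vertex on one side of a bipartite component forbids $I$-vertices on the opposite side while feasibility requires any non-trivial star to hit both sides of its ambient component.
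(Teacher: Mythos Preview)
Your proof is correct and follows essentially the same approach as the paper's: both argue by contradiction using the invariant $(T\setminus I)\cap N_G(I)=\emptyset$, the complete bipartite structure from condition~(b), and feasibility~(ii). The only differences are cosmetic: you explicitly verify that the invariant propagates through each update of the algorithm (the paper simply cites ``our assumption on how $T$ was chosen'' and defers details to \cite{HaSz1}), and you phrase the contradiction as ``$I\cap K_{A,B}\subseteq A$ is incompatible with the non-trivial star structure'' whereas the paper phrases it as ``$v$ has a neighbor in $I$''; these are contrapositives of the same argument.
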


\begin{proof}
Suppose that vertex $v \in T \setminus I$ is in a component of $G$ intersecting $I$. Because every component of $G$ is a complete bipartite graph (condition (b) of Theorem \ref{union-of-two-classes}), $v$ is adjacent to a vertex in either $I \cap T$ or $I \setminus T$ (using condition (ii) of feasible pairs), so in particular $v$ is adjacent to a vertex in $I$. Then $(T \setminus I) \cap N_G(I) \neq \emptyset$, which contradicts our assumption on how $T \in \mathcal{T}$ was chosen.
\end{proof}

\begin{claim} \label{claim-4}
$G[U(I)]$ has no IT with respect to $S(I)$.
\end{claim}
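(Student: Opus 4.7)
The plan is to argue by contradiction: assume $G[U(I)]$ has an IT $T^*$ with respect to $S(I)$, and splice $T^*$ together with the part of $T$ lying outside $U(I)$ to produce a genuine IT of $G$ with respect to $\mathcal{P}$, contradicting hypothesis (a) of Theorem~\ref{union-of-two-classes}. The natural candidate will be
\[
\widehat T \;=\; T^* \,\cup\, \bigl(T \setminus U(I)\bigr),
\]
so the two tasks will be to verify that $\widehat T$ is a transversal of $\mathcal{P}$ and that $\widehat T$ is independent in $G$.

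First, I will show $\widehat T$ is a transversal. For every active block $V_j \in S(I)$, $T^*$ contributes exactly one vertex (lying in $V_j \subseteq U(I)$). For every non-active block $V_j \notin S(I)$, the vertex $T \cap V_j$ lies in $T \setminus U(I)$, provided $T$ hits $V_j$. Here I will use the invariant that throughout the procedure $T$ misses exactly the distinguished block $V_i$, which is an active block since $V_i \in S(I)$ from the first iteration onwards. This invariant follows by induction from the algorithm's requirement that successive updates of $T$ agree with each other on $S(I)$; consequently $T$ hits every non-active block, and $\widehat T$ is a transversal of $\mathcal{P}$.

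Second, I will show $\widehat T$ is independent. $T^*$ is independent by assumption, and $T \setminus U(I) \subseteq T$ is independent by feasibility of $(I,T)$; so the only possible edges of $G[\widehat T]$ go between the two sets. Suppose $uv \in E(G)$ with $u \in T^* \subseteq U(I)$ and $v \in T \setminus U(I)$. Since $G$ is a disjoint union of complete bipartite graphs (condition~(b)), $u$ and $v$ lie in the same connected component $K$ of $G$. By Claim~\ref{claim-2}, $K$ meets $I$; but $v \in T \setminus U(I) \subseteq T \setminus I$, so Claim~\ref{claim-3} forces the component of $v$ to be disjoint from $I$, a contradiction. Hence $\widehat T$ is an IT of $G$ with respect to $\mathcal{P}$, contradicting hypothesis~(a).

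I expect the subtle step to be the transversal check rather than the independence check: the independence argument is a direct combination of Claims~\ref{claim-2} and~\ref{claim-3} together with the complete-bipartite structure from condition~(b), whereas making the transversal argument airtight requires establishing, and clearly stating, the algorithmic invariant that $T$ always misses $V_i$ throughout the construction of the feasible pair.
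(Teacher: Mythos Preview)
Your proof is correct and follows essentially the same approach as the paper: assume an IT $T^*$ of $G[U(I)]$, glue it to the part of $T$ outside the active region, and derive an IT of $G$ via Claims~\ref{claim-2} and~\ref{claim-3}. The only cosmetic difference is that the paper uses $T' \cup (T \setminus I)$ rather than your $T^* \cup (T \setminus U(I))$; since $T \setminus U(I) \subseteq T \setminus I$ and both require exactly the same invariant (that $T$ misses only an active block), the arguments are equivalent, and your version is arguably cleaner because it yields an exact transversal rather than a superset of one.
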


\begin{proof}
Suppose that $G[U(I)]$ has an IT $T'$ with respect to $S(I)$. Then $T' \cup (T \setminus I)$ is a transversal of $\mathcal{P}$, and it is independent in $G$ by Claim \ref{claim-2} and Claim \ref{claim-3}. Thus $T' \cup (T \setminus I)$ is an IT of $G$ with respect to $\mathcal{P}$, contradicting condition (a) of Theorem \ref{union-of-two-classes}.
\end{proof}

\begin{claim} \label{claim-5}
We have $S(I) = \mathcal{P}$, and thus $U(I) = V(G)$ and $T \subseteq I$.
\end{claim}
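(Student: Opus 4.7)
The plan is to prove the three parts in sequence. First, I would establish $S(I) = \mathcal{P}$ by contradiction, leveraging Claim~\ref{claim-4} together with the block-minimality hypothesis (condition (a) of Theorem~\ref{union-of-two-classes}). Suppose for contradiction that some block $V_j \in \mathcal{P} \setminus S(I)$ is inactive. By block-minimality, $G - V_j$ admits an IT $T^*$ with respect to $\mathcal{P} \setminus \{V_j\}$, so $T^*$ is an independent set in $G$ containing exactly one vertex from each block other than $V_j$. Since $V_j \notin S(I)$, we have $S(I) \subseteq \mathcal{P} \setminus \{V_j\}$, and the restriction $T^* \cap U(I)$ still selects one vertex from each active block while remaining independent in $G[U(I)]$; hence it is an IT of $G[U(I)]$ with respect to $S(I)$, contradicting Claim~\ref{claim-4}. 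This forces $S(I) = \mathcal{P}$, from which $U(I) = \bigcup_{V_k \in \mathcal{P}} V_k = V(G)$ is immediate.

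For the last assertion $T \subseteq I$, the plan is to appeal to the termination of the construction procedure. Suppose for contradiction that some $t \in T \setminus I$ exists. Since $U(I) = V(G)$, the vertex $t$ lies in an active block and satisfies $t \notin I$. By Claim~\ref{claim-3}, the component of $G$ containing $t$ is disjoint from $I$, so $N_G(t) \cap I = \emptyset$. Thus $t$ meets all the conditions required to be selected as the next vertex $w$ by the algorithm, contradicting the assumption that the procedure has terminated at $(I, T)$. Therefore $T \setminus I = \emptyset$, as required.

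I do not foresee a substantial obstacle. The only technical step that needs care is verifying that $T^* \cap U(I)$ is genuinely a transversal of $S(I)$: this uses precisely the facts that $T^*$ covers all of $\mathcal{P} \setminus \{V_j\}$, that $V_j$ is assumed to lie outside $S(I)$, and that each block of $S(I)$ is by definition contained in $U(I)$. Independence is preserved trivially as a subset of an independent set, and the last half is a one-line appeal to the stopping condition of the construction, so the whole proof should be short once Claim~\ref{claim-4} is in hand.
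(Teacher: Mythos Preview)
Your proposal is correct and matches the paper's approach: the paper's proof is the one-line statement ``Because $G$ is block-minimal with no IT by condition (a), Claim~\ref{claim-4} implies that $S(I) = \mathcal{P}$,'' and your argument is exactly the natural unpacking of that sentence. For $T \subseteq I$ the paper says only ``and thus''; your termination argument via Claim~\ref{claim-3} is fine, and an equally short alternative is to combine Claims~\ref{claim-2} and~\ref{claim-3} directly (every vertex of $U(I)=V(G)$ lies in a component meeting $I$, while every vertex of $T\setminus I$ lies in a component disjoint from $I$).
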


\begin{proof}
Because $G$ is block-minimal with no IT by condition (a), Claim \ref{claim-4} implies that $S(I) = \mathcal{P}$.
\end{proof}

\begin{claim} \label{claim-6}
Every vertex $w \in I \setminus T$ has degree $\text{deg}_T(w) = 1$ into $T$.
\end{claim}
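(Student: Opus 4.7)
My plan is to prove Claim \ref{claim-6} by combining two counts of $|I|$ with a lower bound on $|W|$ obtained from the component structure guaranteed by conditions (b) and (c) of Theorem \ref{union-of-two-classes}.

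First, I would pin down $|T|$. Since $G$ has no IT, every partial IT has size at most $r - 1$; the starting transversal $T_0$ has size exactly $r - 1$ by block-minimality, and $T$ is maintained as a maximum partial IT throughout (condition (i) of a feasible pair). Thus $|T| = r - 1$. Next I would count $|I|$ in two ways. By Claim \ref{claim-5} we have $T \subseteq I$, so $|I| = |T| + |W| = (r - 1) + |W|$. By condition (ii), $G[I]$ is the disjoint union of the $|W|$ non-trivial stars $C(w, T)$, the one centered at $w$ having $1 + \deg_T(w)$ vertices, so $|I| = |W| + \sum_{w \in W} \deg_T(w)$. Comparing the two expressions gives
\begin{equation*}
\sum_{w \in W} \deg_T(w) = r - 1.
\end{equation*}

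The key step, and the main obstacle I anticipate, is establishing that $|W| \geq r - 1$. Here I would use Claim \ref{claim-2} together with Claim \ref{claim-5} (which gives $U(I) = V(G)$) to conclude that every component of $G$ meets $I$. By condition (c), $G$ has exactly $r - 1$ components. For each component $K$, pick $v \in K \cap I$: either $v \in W$, or $v \in T$ is a leaf of a unique star $C(w', T)$, and since this star lies inside a single component of $G$, its center $w' \in W$ must also lie in $K$. Hence each of the $r - 1$ components contributes a distinct element of $W$, giving $|W| \geq r - 1$.

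To finish, since each star is non-trivial, $\deg_T(w) \geq 1$ for every $w \in W$, and so
\begin{equation*}
r - 1 = \sum_{w \in W} \deg_T(w) \geq |W| \geq r - 1.
\end{equation*}
Equality must hold throughout, which forces $\deg_T(w) = 1$ for every $w \in W$, as required.
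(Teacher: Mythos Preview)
Your proof is correct and follows essentially the same counting argument as the paper: both establish $\sum_{w\in W}\deg_T(w)=|T|=r-1$, use $\deg_T(w)\ge 1$, and squeeze against the component count $r-1$ from condition~(c). The only minor difference is that the paper asserts each component contains \emph{exactly} one vertex of $W$ ``by construction'' to get $|W|=r-1$ directly, whereas you argue more explicitly that each component contains \emph{at least} one vertex of $W$ to get $|W|\ge r-1$, which is all the squeeze needs.
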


\begin{proof}
By Claim \ref{claim-2} and Claim \ref{claim-5}, every component of $G$ intersects $I$, and by construction every such component contains exactly one vertex from $I \setminus T$. Therefore,
\begin{align*}
	r - 1 = |T| = \sum_{w \in I \setminus T} \text{deg}_T(w) \ge \sum_{w \in I \setminus T} 1 = |I \setminus T| = \text{\# components of } G = r - 1,
\end{align*}
where $\text{deg}_T(w) \ge 1$ by condition (ii) of feasible pairs, and the last equality is by condition (c) of Theorem \ref{union-of-two-classes}. Hence, $\text{deg}_T(w) = 1$ for every $w \in I \setminus T$.
\end{proof}

\begin{claim} \label{claim-7}
	$(I, T)$ is an IMC.
\end{claim}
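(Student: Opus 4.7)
The plan is to assemble Claim 7 directly from the preceding claims and the two remaining clauses of the feasibility conditions, with essentially no new work. There are two things to verify: that $\mathcal{G}_I$ is a tree on $r$ vertices, and that $G[I]$ is a perfect matching.

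For the first part, I would note that by condition (iii) of the definition of a feasible pair, $\mathcal{G}_I$ is a tree on vertex set $S(I)$, and by Claim \ref{claim-5} we have $S(I) = \mathcal{P}$, so $\mathcal{G}_I$ is a tree on $r$ vertices. This is the easy half.

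For the second part, I would recall that by condition (ii) of feasibility, $G[I]$ is a vertex-disjoint union of the non-trivial stars $C(w,T)$ for $w \in I \setminus T$, where $w$ is the center of the star $C(w,T)$ and its leaves lie in $T$. By Claim \ref{claim-6}, each such center $w$ has exactly one neighbor in $T$, so each star $C(w,T)$ has exactly two vertices and is therefore a single edge. Hence $G[I]$ is a matching in which every vertex of $I \setminus T$ is saturated, and consequently so is every vertex of $I \cap T = T$ (by Claim \ref{claim-5}, $T \subseteq I$, and each vertex of $T$ occurs in exactly one of these two-vertex components by the counting argument in the proof of Claim \ref{claim-6}). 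Thus $G[I]$ is a perfect matching.

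Combining these two observations gives that $(I,T)$ satisfies both conditions of an IMC, completing the proof. I do not expect any significant obstacle here: all the real content has been extracted by Claims \ref{claim-5} and \ref{claim-6}, and Claim \ref{claim-7} is a bookkeeping step that reads off the IMC structure from the feasibility conditions plus these claims.
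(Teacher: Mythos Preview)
Your proposal is correct and follows essentially the same approach as the paper: use condition (ii) plus Claim~\ref{claim-6} to conclude each star $C(w,T)$ is a single edge so $G[I]$ is a perfect matching, and use condition (iii) plus Claim~\ref{claim-5} to conclude $\mathcal{G}_I$ is a tree on $r$ vertices. The extra saturation discussion is unnecessary---once the components of $G[I]$ are all single edges, every vertex of $I$ is automatically covered---but it does no harm.
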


\begin{proof}
By condition (ii) of feasible pairs and Claim \ref{claim-6}, every component of $G[I]$ is a $2$-vertex star, i.e., a single edge, meaning that $G[I]$ is a perfect matching. By Claim \ref{claim-5}, every block of $\mathcal{P}$ is active and thus $\mathcal{G}_I$ is a tree on vertex set $\mathcal{P}$.
\end{proof}

We have shown that we can construct an IMC $(I, T)$ with $V_i \cap T = \emptyset$ for any initial block $V_i \in \mathcal{P}$. Now we observe additional facts that we will utilize in our proof of Theorem \ref{union-of-two-classes}.

\begin{claim} \label{claim-8}
Every vertex of $G$ has exactly one neighbor in $I$.
\end{claim}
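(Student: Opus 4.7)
The plan is to read off the claim from the structural facts already established in Claims \ref{claim-5}--\ref{claim-7}, together with condition (b) of Theorem \ref{union-of-two-classes}. In particular, I will argue that every component of $G$ contains exactly two vertices of $I$, one in each side of its complete bipartition, and then the claim follows immediately from the definition of a complete bipartite graph.

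First I would recall that by Claim \ref{claim-7}, $G[I]$ is a perfect matching, so $|I|$ is even and $G[I]$ consists of $|I|/2$ disjoint edges. Since $|T| = r-1$ by condition (a) (maximum partial IT missing only $V_i$) and every $w \in I \setminus T$ satisfies $\deg_T(w) = 1$ by Claim \ref{claim-6}, we get $|I \setminus T| = r-1$ and hence $G[I]$ has exactly $r-1$ matching edges. By condition (c), $G$ also has exactly $r-1$ components, and by the proof of Claim \ref{claim-6} each component contains exactly one vertex of $I \setminus T$; since every matching edge has one endpoint in $I \setminus T$ and one endpoint in $T$, each component of $G$ therefore contains exactly one edge of $G[I]$ and exactly two vertices of $I$.

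Next I would use condition (b): each component is a complete bipartite graph $K_{A,B}$. Because the two vertices of $I$ in this component form an edge, they must lie on opposite sides of the bipartition, so $|I \cap A| = |I \cap B| = 1$. Write $I \cap A = \{a\}$ and $I \cap B = \{b\}$. Now for any vertex $v \in V(G)$, let $K_{A,B}$ be the component containing $v$. The only edges of $G$ incident to $v$ go to the opposite side of its bipartition (edges between distinct components do not exist). Hence if $v \in A$ then $N_G(v) \cap I = I \cap B = \{b\}$, and if $v \in B$ then $N_G(v) \cap I = I \cap A = \{a\}$. Either way, $v$ has exactly one neighbor in $I$, as required.

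I do not expect any real obstacle here: the claim is a direct consequence of the matching structure of $G[I]$, the component-count equality $|I \setminus T| = r-1 =$ (number of components of $G$), and the fact that in a complete bipartite graph every vertex is adjacent to every vertex on the other side. The only point requiring a little care is ruling out that a component of $G$ might contain more than two vertices of $I$, but the counting in the previous paragraph, combined with Claim \ref{claim-6}, forces equality throughout.
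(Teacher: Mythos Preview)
Your proof is correct. The paper's own argument is slightly shorter and avoids the counting via Claim~\ref{claim-6} and condition~(c): it simply observes (using Claims~\ref{claim-2} and~\ref{claim-5}) that every vertex lies in a component intersecting $I$, and then uses directly that an induced matching inside a complete bipartite graph $K_{A,B}$ can have at most one edge (two disjoint matching edges would induce a $K_{2,2}$), so the perfect matching $G[I]$ contributes exactly one vertex to each side of $K_{A,B}$. Your route instead reaches ``exactly one matching edge per component'' by the equality $|I\setminus T| = r-1 =$ number of components, borrowed from the proof of Claim~\ref{claim-6}. Both arguments are equally valid; the paper's is a bit more self-contained (it does not appeal to condition~(c) or to the internals of another proof), while yours makes the bijection between components and matching edges explicit.
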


\begin{proof}
By Claim \ref{claim-2} and Claim \ref{claim-5}, every vertex $v \in V(G)$ lies in a component intersecting $I$. Since $G[I]$ is a perfect matching (by Claim \ref{claim-7}) and every component of $G$ is a complete bipartite graph (by condition (b) of Theorem \ref{union-of-two-classes}), it follows that $v$ is adjacent to exactly one vertex in $I$.
\end{proof}

For the remaining part of the proof, we need some additional terminology. Let $(I, T)$ be an IMC with $V_i \cap T = \emptyset$. We view the block graph $\mathcal{G}_I$ as a rooted tree whose root is $V_i$ with respect to $T$, and every other block is a descendant of $V_i$. For a block $V_j$ and vertex $w \in V_j \cap (I \setminus T)$, the \textit{$w$-child} of $V_j$ in $\mathcal{G}_I$ is the block $V_k$ containing the unique neighbor of $w$ in $T$. We say that a block $V_k$ is a \textit{$w$-descendant} of $V_j$ if there is a sequence of blocks $V_j = U_1, U_2, \ldots, U_{\ell - 1}, U_\ell = V_k$ together with a sequence of vertices $w = w_1, v_2, w_2, \ldots, v_{\ell - 1}, w_{\ell - 1}, v_\ell$ such that $w_n \in U_n \cap (I \setminus T)$, $v_n \in U_n \cap T$, and $v_n$ is the unique neighbor of $w_{n - 1}$ in $T$, for every $n$. In particular, each $U_n$ is the $w_{n-1}$-child of $U_{n-1}$. The sequence of blocks $U_1, \ldots, U_\ell$ is the unique path from $V_j$ to $V_k$ in the block graph $\mathcal{G}_I$.

When $V_j = V_i$ is the root of $\mathcal{G}_I$ with respect to $T$, the set $T' = T \cup \{w_1, \ldots, w_{\ell-1}\} - \{v_2, \ldots, v_{\ell}\}$ is another maximum partial independent transversal. This is an application of the fact that $G[I]$ is a perfect matching. Moreover, $(I, T')$ is an IMC with root $V_k$ (as $V_k \cap T' = \emptyset$). We say that $T'$ is obtained from $T$ by \textit{switching} along the path from $V_i$ to $V_k$ in $\mathcal{G}_I$.

\begin{claim} \label{claim-9}
For every vertex $w \in I \setminus T$, say $w \in V_j$, every neighbor of $w$ in $G$ lies in a block that is a $w$-descendant of $V_j$ in $\mathcal{G}_I$.
\end{claim}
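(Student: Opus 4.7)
The plan is to argue by contradiction: suppose $w$ has a neighbor $u$ in $G$ lying in a block $V_\ell$ that is not a $w$-descendant of $V_j$, and construct a full IT of $G$, contradicting condition (a) of Theorem~\ref{union-of-two-classes}. A convenient first reduction is to assume $V_j$ itself is the root of $\mathcal{G}_I$: switching $T$ along the unique path from $V_i$ to $V_j$ yields an IMC rooted at $V_j$ in which $w$ still lies in $I \setminus T$, $V_k$ remains the $w$-child with the same subtree (since $V_k$ is a child of $V_j$ in both rootings), and hence the set of $w$-descendants of $V_j$ is unchanged.

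Let $K_{A,B}$ denote the complete bipartite component of $G$ containing $w$, with $w \in A$ and $u \in B$. By the counting used in the proof of Claim~\ref{claim-6} combined with condition (c), each component of $G$ contains exactly one matching edge of $G[I]$, so $I \cap (A \cup B) = \{w, v^*\}$, where $v^* \in V_k$ is the matching partner of $w$. Since $V_\ell$ is not a $w$-descendant, $V_\ell \neq V_k$, whence $u \neq v^*$ and $u \notin I$. Moreover, the unique path $V_j = X_1, X_2, \ldots, X_q = V_\ell$ in $\mathcal{G}_I$ does not use the edge $V_j V_k$, so $X_2 \neq V_k$.

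The core step is to switch $T$ along this path, using the matching edges $w_p v_{p+1}$ (with $w_p \in X_p \cap (I \setminus T)$ and $v_{p+1} \in X_{p+1} \cap T$) between consecutive path blocks, producing a partial IT $T_{\mathrm{new}} = (T \setminus \{v_2, \ldots, v_q\}) \cup \{w_1, \ldots, w_{q-1}\}$ missing only $V_\ell$. Independence of $T_{\mathrm{new}}$ follows from Claim~\ref{claim-6} (each $w_p$ has only the single $T$-neighbor $v_{p+1}$, which has been removed) together with the fact that $I \setminus T$ is itself independent in $G$ (since $G[I]$ is a perfect matching between $T$ and $I \setminus T$).

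Finally, I would show that $T_{\mathrm{new}} \cup \{u\}$ is an IT of $G$. Since $u \in B$, its neighbors in $G$ are exactly $A$, and $A \cap I = \{w\}$, so the task reduces to verifying that $w \notin T_{\mathrm{new}}$. This is the main subtlety of the proof: $w_1 \in V_j$ is the matching partner of $v_2 \in X_2 \neq V_k$, whereas $w$'s matching partner $v^*$ lies in $V_k$, forcing $w_1 \neq w$; and for $p \geq 2$, $X_p \neq V_j$ forces $w_p \neq w$. Hence $w \notin T_{\mathrm{new}}$, so $T_{\mathrm{new}} \cup \{u\}$ is an independent transversal of $G$ with respect to $\mathcal{P}$, contradicting condition (a).
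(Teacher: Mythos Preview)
Your proposal is correct and follows essentially the same strategy as the paper: argue by contradiction, switch $T$ along a path in $\mathcal{G}_I$ to free up the block $V_\ell$ containing the offending neighbor, and then add that neighbor to obtain a full IT. The paper's version is slightly more economical --- it switches directly from the original root $V_i$ to $V_\ell$ (skipping your preliminary re-rooting at $V_j$) and invokes Claim~\ref{claim-8} to identify $w$ as the unique $I$-neighbour of $u$, so that the only thing left to check is $w \notin T'$, which follows immediately from the hypothesis that $V_\ell$ is not a $w$-descendant of $V_j$.
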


\begin{proof}
Suppose for contradiction that $w$ has a neighbor $x$ in a block $V_k$ that is not a $w$-descendant of $V_j$ in $\mathcal{G}_I$. Let $T'$ be the maximum partial independent transversal obtained from $T$ by switching along the path from $V_i$ to $V_k$ in $\mathcal{G}_I$, so that $T' \cap V_k = \emptyset$. Because $V_k$ is not a $w$-descendant of $V_j$, we have that $w \in I \setminus T'$. Then $T' \cup \{x\}$ is independent in $G$ because $w \in I \setminus T'$ and $w$ is the unique neighbor of $x$ in $I$ (applying Claim \ref{claim-8}). Since $x \in V_k$, it follows that $T' \cup \{x\}$ is an IT of $G$, a contradiction. 
\end{proof}

We now prove the following lemma, from which we easily derive Theorem \ref{union-of-two-classes}.

\begin{lem} \label{complete-bipartite-lemma}
Let graph $G$ and partition $\mathcal{P} = \{V_1, \ldots, V_r\}$ satisfy conditions (a), (b), (c) of Theorem \ref{union-of-two-classes}. Then for every block $V_i$, there exists a component $K_{A, B}$ of $G$ such that $A \subseteq V_i$.
\end{lem}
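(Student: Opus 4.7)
The plan is to start from an IMC $(I, T)$ rooted at $V_i$ (so $V_i \cap T = \emptyset$), and locate some $w$ in $W_i := V_i \cap (I \setminus T)$ whose component $K_w$ of $G$ has its ``$w$-side'' entirely inside $V_i$. For each such $w$, let $V_{k_w}$ be the $w$-child of $V_i$ in $\mathcal{G}_I$, write $v_w = m(w) \in V_{k_w} \cap T$, and let $K_w = K_{A_w,B_w}$ with $w \in A_w$ and $v_w \in B_w$; the lemma reduces to finding $w$ with $A_w \subseteq V_i$. I will write $S(V_k)$ for the subtree of $V_k$ in the tree $\mathcal{G}_I$ rooted at $V_i$.

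First I would establish two basic facts valid for every $w \in W_i$: namely, $B_w \cap V_i = \emptyset$ and $A_w \cap V_{k_w} = \emptyset$. A hypothetical $z \in B_w \cap V_i$ would lie in $V_i \setminus I$ (since $B_w \cap I = \{v_w\}$ and $v_w \notin V_i$), and Claim~8 forces its unique $I$-neighbor to be $w \notin T$, so $T \cup \{z\}$ would be an IT and contradict~(a). Symmetrically, switching along the edge $V_i V_{k_w}$ replaces $T$ by $T' = (T \cup \{w\}) \setminus \{v_w\}$, producing an IMC rooted at $V_{k_w}$, and the same reasoning applied to a putative $z \in A_w \cap V_{k_w}$ (using $T' \cup \{z\}$) yields the contradiction. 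Combined with Claim~9 applied in each rooting, this gives $B_w \subseteq S(V_{k_w})$ and $A_w \subseteq V_i \cup \bigcup_{w' \in W_i \setminus \{w\}} S(V_{k_{w'}})$.

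Suppose for contradiction that $A_w \not\subseteq V_i$ for every $w \in W_i$. If $|W_i| = 1$, the containment above already forces $A_w \subseteq V_i$, contradiction; so $|W_i| \ge 2$. Pick any $y_w \in A_w \setminus V_i$ for each $w$; by the containment and $A_w \cap V_{k_w} = \emptyset$, the vertex $y_w$ lies in some $V_{j_w} \in S(V_{k_{\pi(w)}})$ with $\pi(w) \ne w$, and $y_w \notin I$ has unique $I$-neighbor $v_w$ (by Claim~8). Since $\pi : W_i \to W_i$ has no fixed point, it contains a cycle $\omega_1 \to \omega_2 \to \cdots \to \omega_s \to \omega_1$ with $s \ge 2$; the subtrees $S(V_{k_{\omega_i}})$ are pairwise disjoint, being rooted at distinct children of $V_i$.

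From this cycle I would construct an explicit IT $T^{\dag}$ of $G$, contradicting~(a). Concretely, for each $i$ (indices mod~$s$) remove $v_{\omega_i}$ from $T$ and perform the matching-edge switch along the path $P_i$ inside $S(V_{k_{\omega_i}})$ from $V_{k_{\omega_i}}$ to $V_{j_{\omega_{i-1}}}$ (so $V_{k_{\omega_i}}$ and each intermediate block of $P_i$ become covered by $W$-vertices while $V_{j_{\omega_{i-1}}}$ becomes uncovered); then add each $y_{\omega_{i-1}}$ to cover $V_{j_{\omega_{i-1}}}$, and finally add $\omega_1$ to cover $V_i$. The main obstacle is verifying that $T^{\dag}$ is independent, which I would handle using three disjointness facts: the subtrees $S(V_{k_{\omega_i}})$ are disjoint, so the switches for different $i$ do not interact; by Claim~8 each added vertex has a unique $I$-neighbor, and that neighbor has been removed from $T$; and each of the matching edges involved (the $\omega_i v_{\omega_i}$, and the switched edges along each $P_i$) lies in a distinct component of $G$, so the non-$I$ additions $y_{\omega_i}$ are non-adjacent to every other element of $T^{\dag}$. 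The resulting IT contradicts~(a), so some $w \in W_i$ must satisfy $A_w \subseteq V_i$, as required.
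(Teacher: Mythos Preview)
Your proof is correct, but the route differs from the paper's. The paper argues by iteratively modifying $I$ itself: if $V_i$ is not a leaf of $\mathcal{G}_I$, it picks some $w \in W_i$ with match $v$, and if $N_G(v) \not\subseteq V_i$ it takes any $x \in N_G(v)\setminus V_i$, checks that $(I - \{w\} \cup \{x\}, T)$ is again an IMC, and observes that the degree of $V_i$ in the new block graph has dropped by one. Repeating, $V_i$ eventually becomes a leaf, at which point a single application of Claim~9 (after one switch) gives a vertex $v$ with $N_G(v)\subseteq V_i$.

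Your argument instead keeps $(I,T)$ fixed, assumes failure for every $w\in W_i$ simultaneously, and encodes the witnesses $y_w$ as a fixed-point-free map $\pi$ on $W_i$; a cycle of $\pi$ then yields an explicit IT by combining $s$ disjoint path-switches and inserting the $y_{\omega_i}$'s and $\omega_1$. The independence check goes through exactly as you outline (Claim~8 pins down the unique $I$-neighbour of every added vertex, and each such neighbour has indeed been removed; the $y$'s lie in distinct components so cannot see one another). Your approach is a single global contradiction rather than an iteration; it is a bit more to set up, but it avoids having to re-verify the IMC property after each modification of $I$. One minor remark: your containment $A_w \subseteq V_i \cup \bigcup_{w'\neq w} S(V_{k_{w'}})$ uses Claim~9 for the switched pair $(I,T')$, which is legitimate since the proof of Claim~9 only uses IMC properties and Claim~8, not the specific construction procedure.
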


\begin{proof}
Let $V_i$ be any block of $\mathcal{P}$. We wish to find a vertex $v \in V(G)$ such that $N_G(v) \subseteq V_i$. If we succeed, then because every component of $G$ is a complete bipartite graph (by condition (b)), it follows that $N_G(v)$ is a part $A$ of some complete bipartite component $K_{A, B}$ of $G$, and $A \subseteq V_i$ as required.
	
Applying conditions (a), (b), (c) as we have done, let $(I, T)$ be an IMC such that $V_i \cap T = \emptyset$, i.e., $V_i$ is a root of the block graph $\mathcal{G}_I$ with respect to $T$. First suppose that $V_i$ is a leaf of $\mathcal{G}_I$ (allowing the root to be considered a leaf). Let $V_j$ be the unique neighbor of $V_i$ in $\mathcal{G}_I$. Let $T' = T \cup \{w\} - \{v\}$ be obtained from $T$ by switching along the path from $V_i$ to $V_j$, where $w \in V_i \cap (I \setminus T)$ and $v \in V_j \cap T$. Now $\mathcal{G}_I$ has root $V_j$ with respect to $T'$. Then the block $V_i$ is the unique $v$-descendant of $V_j$ in $\mathcal{G}_I$, so by Claim \ref{claim-9} we have that $N_G(v) \subseteq V_i$, as we wanted.
	
Now suppose that $V_i$ is not a leaf of $\mathcal{G}_I$. Let $w \in V_i \cap (I \setminus T)$, let $V_j$ be the $w$-child of $V_i$, and let $v \in V_j \cap T$ be the neighbor of $w$ in $T$. If $N_G(v) \subseteq V_i$, then we are done. Otherwise, let $x \in N_G(v) \setminus V_i$. Note that $x \notin V_j$, as otherwise $T - \{v\} \cup \{w, x\}$ is an IT of $G$ with respect to $\mathcal{P}$ (using condition (b)), which contradicts condition (a) that there is no IT. Let $I' = I - \{w\} \cup \{x\}$. We claim that $(I', T)$ is another IMC. To see this, first observe that $v$ is the unique neighbor of $x$ in $I$ by Claim \ref{claim-8}, which implies that $G[I']$ is still a perfect matching. Also, let $T''$ be obtained from $T$ by switching along the single-edge path from $V_i$ to $V_j$ in $\mathcal{G}_{I}$. Then in the IMC $(I, T'')$, the vertex $x$ lies in a block that is a $v$-descendant of $V_j$, by Claim \ref{claim-9}. From this we can conclude that the new block graph $\mathcal{G}_{I'}$ is still a tree on $r$ vertices, rather than the disjoint union of a unicyclic component and a tree. We deduce that $(I', T)$ is still an IMC.

Now, $V_i$ has degree one less in $\mathcal{G}_{I'}$ than it had in $\mathcal{G}_I$. Therefore, applying the above procedure iteratively, we produce a sequence of IMC's where the degree of $V_i$ decreases by one at each step. Thus before or at the base case where $V_i$ is a leaf, we will find a desired vertex $v \in V(G)$ such that $N_{G}(v) \subseteq V_i$.
\end{proof}

\begin{proof}[Proof of Theorem \ref{union-of-two-classes}]
By Lemma \ref{complete-bipartite-lemma}, each of the $r$ blocks in $\mathcal{P}$ covers one side of one out of $r - 1$ complete bipartite components of $G$. Therefore, by the pigeonhole principle, there exist two distinct blocks $V_i$ and $V_j$ and a single component $K_{A,B}$ such that $A \subseteq V_i$ and $B \subseteq V_j$.
\end{proof}


\begin{thebibliography}{}
	\footnotesize
	
	\bibitem{AhAlBe} R. Aharoni, N. Alon, and E. Berger, Eigenvalues of $K_{1,k}$-free graphs and the connectivity of their independence complexes, \textit{J. Graph Theory} 83(4) (2016), 384--391.
	
	\bibitem{AhBe1} R. Aharoni and E. Berger, The intersection of a matroid and a simplicial complex, \textit{Trans. Amer. Math. Soc.} 358 (2006), 4895--4917.
	
	\bibitem{AhBeZi0} R. Aharoni, E. Berger, and R. Ziv, A tree version of K\"onig's theorem, \textit{Combinatorica} 22(3) (2002), 335--343.
	
	\bibitem{AhBeZi} R. Aharoni, E. Berger, and R. Ziv, Independent systems of representatives in weighted graphs, \textit{Combinatorica} 27(3) (2007), 253--267.
	
	\bibitem{AhChKo} R. Aharoni, M. Chudnovsky, and A. Kotlov, Triangulated spheres and colored cliques, \textit{Disc. Comput. Geometry} 28 (2002), 223--229.
	
	\bibitem{AhHa} R. Aharoni and P. Haxell, Hall's theorem for hypergraphs, \textit{J. Graph Theory} 35 (2000), 83--88.
	
	\bibitem{AhHoHoSp} R. Aharoni, R. Holzman, D. Howard, and P. Spr\"ussel, Cooperative colorings and independent systems of representatives, \textit{Electron. J. Combin.} 22(2) (2015), P2.27.
	
	\bibitem{Al1} N. Alon, The linear arboricity of graphs, \textit{Israel J. Math.} 62 (1988), 311--325.
	
	\bibitem{Al2} N. Alon, Probabilistic methods in coloring and decomposition problems, \textit{Discrete Math.} 127 (1994), 31--46.
	
	\bibitem{Al3} N. Alon, Problems and results in extremal combinatorics--I, \textit{Discrete Math.} 273 (2003), 31--53.
	
	\bibitem{AsFeSa} A. Asadpour, U. Feige, and A. Saberi, Santa claus meets hypergraph matchings, \textit{ACM Transactions on Algorithms (TALG)} 8(3) (2012), 1--9.
		
	\bibitem{BeHaSz} R. Berke, P. Haxell, and T. Szab\'o, Bounded transversals in multipartite graphs, \textit{J. Graph Theory} 70 (2012), 318--331.
	
	\bibitem{BoHo} T. Bohman and R. Holzman, On a list coloring conjecture of Reed, \textit{J. Graph Theory} 41 (2002), 106--109.
	
	\bibitem{BoErSz} B. Bollob\'as, P. Erd\H{o}s, and E. Szemer\'edi, On complete subgraphs of r-chromatic graphs, \textit{Discrete Math.} 13(2) (1975), 97--107.
    
    \bibitem{Bretal} J. Britnell, A. Evseev, R. Guralnick, P. Holmes, and A. Mar\'{o}ti, Sets of elements that pairwise generate a linear group, \textit{J. Combin. Theory A} 115 (2008), 442--465.
	  
	\bibitem{CaHaKaWd} S. Cambie, P. Haxell, R. Kang, and R. Wdowinski, A precise condition for independent transversals in bipartite covers, \textit{SIAM J. Discrete Math.}, to appear.
	
	\bibitem{DuHsHw} D.-Z. Du, D. F. Hsu, and F. K. Hwang, The Hamiltonian property of consecutive-$d$ digraphs, in \textit{Graph-theoretic models in computer science, II(Las Cruces, NM, 1988-1990)}, \textit{Mathematical and Computer Modelling} 17 (1993), 61--63.
	
	\bibitem{DvEsKaOz1} Z. Dvo\v{r}\'ak, L. Esperet, R. J. Kang, and K. Ozeki, Single-conflict colouring, \textit{J. Graph Theory} 97(1) (2021), 148--160.
	
	\bibitem{DvEsKaOz2} Z. Dvo\v{r}\'ak, L. Esperet, R. J. Kang, and K. Ozeki, Least conflict choosability, \textit{ArXiv e-prints} (2018).
	
	\bibitem{DvPo} Z. Dvo\v{r}\'ak and L. Postle, Correspondence coloring and its application to list-coloring planar graphs without cycles of lengths 4 to 8, \textit{J. Combin. Theory Ser. B} 129 (2017), 95--114.
	
	\bibitem{Er} P. Erd\H{o}s, On some of my favorite problems in graph theory and block design, in \textit{Graphs, designs, and combinatorial geometries (Catania, 1989)}, \textit{Le Matematiche} 45 (1990), 61--73. 
	
	\bibitem{FlSt} H. Fleischner and M. Stiebitz, A solution to a colouring problem of P. Erd\H{o}s, \textit{Discrete Math.} 101 (1992), 39--48.
	
	\bibitem{FrHeKo} P. Fraigniaud, M. Heinrich, and A. Kosowski, Local conflict coloring, in \textit{IEEE 57th Annual Symposium on Foundations of Computer Science}, FOCS 2016, 9--11 October 2016, Hyatt Regency, New Brunswick, New Jersey (2016), 625--634.
	
	\bibitem{GlSu} S. Glock and B. Sudakov, An average degree condition for independent transversals, \textit{J. Combin. Theory Ser. B} 154 (2022), 370--391.
    
    \bibitem{GraHa} A. Graf and P. Haxell, Finding independent transversals efficiently, \textit{Combin. Probab. Comput.} 29(5) (2020), 780--806.
	  
	\bibitem{GrKaTrWa} C. Groenland, T. Kaiser, O. Treffers, and M. Wales, Graphs of low average degree without independent transversals, \textit{J. Graph Theory} 102(2) (2023), 374--387.
	
	\bibitem{Ha1} P. Haxell, A condition for matchability in hypergraphs, \textit{Graphs and Combin.} 11 (1995), 245--248.
	
	\bibitem{Ha2} P. Haxell, A note on vertex list colouring, \textit{Combin. Probab. Comput.} 10 (2001), 345--347. 
	
	\bibitem{Ha3} P. Haxell, On forming committees, \textit{Amer. Math. Monthly} 118 (2011), 777--788.
	
	\bibitem{Ha4} P. Haxell, Topological connectedness and independent sets in graphs, in \textit{Surveys in Combinatorics 2019}, London Math. Soc. Lecture Note Series 456, Cambridge Univ. Press, Cambridge (2019), 89--113.
	
	\bibitem{HaSz1} P. Haxell and T. Szab\'o, Odd independent transversals are odd, \textit{Combin. Probab. Comput.} 15 (2006), 193--211.
	
	\bibitem{HaSz2} P. Haxell and T. Szab\'o, Improved Integrality Gap in Max-Min Allocation: or Topology at the North Pole, in \textit{Proceedings of the 2023 Annual ACM-SIAM Symposium on Discrete Algorithms (SODA)}, Society for Industrial and Applied Mathematics (2023), 2875--2897.
	
	\bibitem{HaSzTa} P. Haxell, T. Szab\'o, and G. Tardos, Bounded size components--partitions and transversals, \textit{J. Combin. Theory Ser. B} 88 (2003), 281--297.
	
	\bibitem{HaWd} P. Haxell and R. Wdowinski, Degree criteria and stability for independent transversals, \textit{J. Graph Theory} 106(2) (2024), 352--371.
	
	\bibitem{Ji} G. P. Jin, Complete subgraphs of $r$-partite graphs, \textit{Combin. Probab. Comput.} 1 (1992), 241--250. 

	\bibitem{Kaetal} T. Kaiser, D. Kr\'{a}l, R. \v{S}krekovski, and X. Zhu, The circular chromatic index of graphs of high girth, \textit{J. Combin. Theory Ser. B} 97 (2007), 1--13.
  
	\bibitem{KaKe} R. J. Kang and T. Kelly, Colorings, transversals, and local sparsity, \textit{Random Struct. Alg.} 61 (2022), 173--192.
	
	\bibitem{KoZh} A. V. Kostochka and X. Zhu, Adapted list coloring of graphs and hypergraphs, \textit{SIAM J. Discrete Math.} 22(1) (2008), 398--408.
	
	\bibitem{LoSu} P.-S. Loh and B. Sudakov, Independent transversals in locally sparse graphs, \textit{J. Combin. Theory Ser. B} 97 (2007), 904--918.
	
	\bibitem{Me1} R. Meshulam, The clique complex and hypergraph matching, \textit{Combinatorica} 21(1) (2001), 89--94.
	
	\bibitem{Me2} R. Meshulam, Domination numbers and homology, \textit{J. Combin. Theory Ser. A} 102 (2003), 321--330.
	
	\bibitem{Re} B. Reed, The list colouring constants, \textit{J. Graph Theory} 31 (1999), 149--153.
	
	\bibitem{ReSu} B. Reed and B. Sudakov, Asymptotically the list colouring constants are 1, \textit{J. Combin. Theory Ser. B} 86 (2002) 27--37.
	
	\bibitem{Sa} H. Sachs, Elementary proof of the cycle-plus-triangles theorem, in \textit{Combinatorics, Paul Erd\H{o}s is eighty, Vol. 1}, \textit{Bolyai Soc. Math. Stud.} (1993), 347--359.
	
	\bibitem{SzTa} T. Szab\'o and G. Tardos, Extremal problems for transversals in graphs with bounded degree, \textit{Combinatorica} 26 (2006), 333--351.
	
	\bibitem{WaWo} I. Wanless and D. Wood, A general framework for hypergraph coloring, \textit{SIAM J. Discrete Math.} 36(3) (2022), 1663--1677.

    \bibitem{WdThesis} R. Wdowinski, PhD Thesis, University of Waterloo.
	
	\bibitem{Yu} R. Yuster, Independent transversals in $r$-partite graphs, \textit{Discrete Math.} 176 (1997), 255--261.
\end{thebibliography}
\end{document}